\definecolor{light}{gray}{.85}
\newtheorem{cor}{Corollary}[section]
\newtheorem{prn}{Proposition}[section]
\newtheorem{asm}{Assumption}
\newtheorem{rem}{Remark}
\numberwithin{equation}{section} \numberwithin{prn}{section}
\numberwithin{cor}{section} \numberwithin{thm}{section}
\numberwithin{lea}{section}
\def\hlinewd#1{%
\noalign{\ifnum0=`}\fi\hrule \@height #1 %
\futurelet\reserved@a\@xhline}
\def\ps@pprintTitle{%
 \let\@oddhead\@empty
 \let\@evenhead\@empty
 \def\@oddfoot{}%
 \let\@evenfoot\@oddfoot}
\begin{document}
\begin{frontmatter}



\title{\bf
Determination of a structural break
in a men-reverting process}


\author[*]{Fuqi Chen}
\author[*]{Rogemar Mamon\corref{cor1}}
\author[Windsor]{S\'ev\'erien Nkurunziza}
\address[*]{Department of Statistical and Actuarial Sciences,
University of Western Ontario, London, Ontario, Canada}
\address[Windsor]{Department of Mathematics and Statistics,
University of Windsor,
Windsor, Ontario, Canada}
\cortext[cor1]{Corresponding author. Department of Statistical
and Actuarial Sciences, University of Western Ontario,
1151 Richmond Street, London, Ontario, Canada, N6A 5B7.
Email: \textit{rmamon@stats.uwo.ca}}

\begin{abstract}
Determining accurately when regime and structural changes occur in various time-series data is critical in many social and natural sciences. We develop and show further the equivalence of two consistent estimation techniques in locating the change point under the framework of a generalised version of the Ornstein-Uhlehnbeck process. Our methods are based on the least sum of squared error and the maximum log-likelihood approaches. The case where both the existence and the location of the change point are unknown is investigated and an informational methodology is employed to address these issues. Numerical illustrations are presented to assess the methods’ performance.
\end{abstract}

\begin{keyword}
Sequential analysis \sep least sum of squared errors
\sep maximum likelihood \sep consistent estimator
\sep existence of change point


\end{keyword}
\end{frontmatter}

\section{Introduction}
{\subsection{Background}}
\noindent The Ornstein-Uhlenbeck (OU) process is
commonly used to model
the stochastic dynamics of various financial variables. Certain economic indicators also
have stylised properties that are adequately captured by the OU process. Vasicek's pioneering work
(1977), employing an OU model to price a zero-coupon bond, inspired
a multitude of research investigations from extensions addressing
the model's weakness of
constant mean-reverting level to various applications in economic and financial practice. The importance of
this stochastic process is also demonstrated by its ubiquity in many fields.
Amongst the finance and finance-related areas highlighting the usefulness of OU process are electricity
market (e.g., Erlwein, et al. (2010)), commodity futures market
(e.g., Date, et al. (2010)), weather derivatives (e.g., Elias,
et al. (2014)), centra-bank setting rate policy (e.g., Elliott and Wilson (2007)),
stochastic control-driven insurance problems (e.g., Liang, et al. (2011)),
spot freight rates in the shipping industry (e.g., Benth, et al. (2015)), risk management (e.g.,
Date and Bustreo (2015), and power generation (e.g., Howell, et al. (2011)).
Applications of the OU process could be as well found in biology (see Rohlfs, et al. (2010)),
neurology (see Shinomoto, et al. (1999)), survival analysis (see Aalen and Gjessing (2004)),
physics (see Lansk\'{y} amd Sacerdote (2001)), and chemistry (see Lu (2003) and (2004)). \\
\ \\
\noindent To rectify the classical OU model's inability to capture the evolution of a process whose mean level
varies with time, Dehling, et al. (2010) introduced a generalised OU process in
which a time-dependent function describes its mean-reverting level. Such a generalised version
incorporates time-inhomogeneity and seasonality of mean reversion simultaneously.
Moreover, the generalised OU process is capable of modelling drastic changes in certain
time points (e.g., interest rates undergoing drastic moves
due to financial crisis, war, etc). Dehling, et al. (2014) developed the
framework in the study of a change-point phenomenon in the generalised OU process
to model the drastic change. \\
\ \\
\noindent Our contributions in this paper hinged on the research results
primarily from two research articles detailed as follows.
The first is the paper of Dehling, et al. (2010) in which
a maximum likelihood estimator (MLE) for the drift parameters of the diffusion
process is derived and the asymptotic properties, such as the
asymptotic distribution of the proposed MLE, are studied.
Dehling, et al. (2014) considered an extended model, where
there is one unknown change point and constructed a likelihood-ratio test statistic in
determining a candidate change point. This line of enquiry was continued
by Zhang~(2015) who examined the asymptotic properties of both the unrestricted and restricted MLE for
the drift parameters of the the generalised OU process with a single change point.
In particular, based on the established
asymptotic distribution of the MLEs, a James-Stein-type shrinkage estimator
for the drift parameters is proposed in Zhang~(2015) as an improvement.
In the estimation of the unknown change point, Zhang~(2015)
showed that the previously established asymptotic properties also hold for any consistent
estimator for the rate of change point. \\
\ \\
\noindent Nonetheless, both Dehling, et al.~(2014) and Zhang~~(2015)
did not provide any explicit method to estimate the change point. This deficiency
inspired the three main contributions of our paper.
Firstly, we present two consistent methods to
estimate the unknown change point. Secondly, we consider the case
where the existence of the change point is uncertain and propose an informational
approach to address this existence issue. Thirdly,
the performance of the proposed methods is theoretically analysed
and validated by a numerical implementation.
In practice, many data series are characterised by some potential changes
in structure, i.e., a sudden change in mean or variance and other model parameters.
It is then of interest to determine the (i) existence
and (ii) location of the change point. This implies segregating the data series
into different segments and analysing them in a more efficient way. \\
\ \\
\noindent Investigations concerning change point problems are not new.
Inaugural contributions to this field were spearheaded, for example, by
Page~(1954) and Shiryaev~(1963). Recent developments
have focused on (i) the estimation of change points and coefficients
of linear regression models with multiple change points (cf. Bai and Perron~(1998);
Perron and Qu~(2006); Lu and Lund~(2007), Gombay~(2010), Chen and Nkurunziza~(2015)); (ii) change-point
testing for the drift parameters of a periodic mean-reverting process (cf. Dehling,
et al.~(2014)); (iii) applications in finance
(cf. Spokoiny~(2009)); (iv) detection of malware within software
(Yan, et al.~(2008)); and (v) climatology (Reeves, et al. (2007), Robbins, et al.~(2011), Gallagher et al.~(2012)).
In general, the analysis of change points could be described as a
hypothesis-testing problem for the existence of change points in various
locations.  Alternatively, this could be viewed as a model selection problem
that treats the change points as
the additional unknown parameters to be estimated. However, unlike ordinary
least squares estimation, there is so far no closed-form estimation
methods to calculate the change point directly
or in a few steps. The existing change-point estimation approaches are
predominantly designed to perform a search at every possible location of
a change point with some efficient computational algorithms until some criteria
are satisfied. The well-known algorithms for
change point detection are the (i) binary segmentation algorithm
(Scott and Knott, (1974); Sen and Shrivastava, (1975)),
(ii) segment-neighbourhood algorithm (Auger and Lawrence (1989);
Bai and Perron (1998)) with adaption to the restricted regression
model (Perron and Qu (2006)), and (iii) PELT algorithm
(Killick, et al.~(2012)). \\
\ \\
\noindent There are two types of scenarios for which change-point problems
are examined in the literature.  In the first scenario,
the number of change points is known but their exact
locations are unknown (see Perron and Qu (2006) and Chen and Nkurunziza~(2015)).
The second scenario covers the more general situation in which both the number and the exact locations
of the change points are unknown. The estimation methods under the first scenario only require
the identification of the exact locations of the change points. Clearly, the performance assessment in the
former scenario is relatively easier than that in the latter scenario. \\

\subsection{Motivating examples}
\subsubsection{West Texas Intermediate (WTI) Cushing crude oil spot prices}\label{wti}
\noindent The first motivating example of this paper is the West Texas Intermediate (WTI) Cushing crude oil spot
prices, which is often being considered as a benchmark in oil pricing. The data set was compiled by
Bloomberg with code ``USCRWTIC" and covers approximately 4 years of daily prices from 09 November 2011 to 09 November 2015 (i.e., 1008 trading days). During this period (see Figure~\ref{fig:realdata1.1}), there is a price decline after September 2014 due to the conflict in the Middle East. It could be recalled that
 in September 2014, there was an increase in the  OPEC oil production led by a rebound in the Libyan output.
 The dollar, on the other hand, continued to get stronger. These events caused the decline of the crude oil prices and suggested the potential existence of a change point in the data series.  \\
\ \\
\noindent Due to the potential existence of the change point, for this data set the classical
OU process without change-point is inappropriate. Our findings show, as detailed in section 6,
that applying the classical method to the original data set produces very large Schwarz information criterion
(SIC) as compared to the proposed method which takes into account the change point.
Indeed, the proposed method increases the log-likelihood value from 0.72 to 11.89 and reduces the SIC by $69\%$.
\subsubsection{XAU currency}\label{xau}
\noindent The second motivating example of this paper is the XAU currency, which is the standard ticker symbol for one troy ounce of gold, considered as a currency to US dollar. This implementation is carried out to show the nuances
in dealing with data sets or its transformed version whose change point is not clear-cut at the outset.
The data set is also obtained from Bloomberg with code ``XAU", and it is a 15-year data series ranging from 03 November 2000 to 04 November 2015 (i.e., 3913 trading days). \\
\ \\
\noindent Descriptive analysis of the data set suggests a certain trend in the XAU currency series
that changes over time. In particular, the currency was increasing since the beginning of the period until August 2011. Most notably after 2008 crisis, the increasing slope became sharper as the investors flocked to gold market. The price was close to \$1900 in August 2011 and it remained above \$1500 until April 2013. Then the price plunged due to the banking crisis in Cyprus and increasing worries about an imminent change in the Federal Reserve's monetary policy. These features in the price movement suggest the potential existence of a change point.\\
\ \\
\noindent By using the proposed method to the original data set, the log likelihood is increased
from 1.43 to 12.62 and reduce the SIC by $43\%$. However, by using the proposed method to the
log-transformed data set, the proposed method does not allow to confirm the existence of a change point.
Nonetheless, the proposed method preserves a good performance in terms of log likelihood (7.06 versus 3.31 for the classical method).
\\
\ \\
\noindent The remainder of this paper is organised as follows.
In Section 2, we look at the formulation of the change-point problem.
We recapitulate in Section 3 the
results of both Dehling, et al. (2014) and Zhang~(2015)
on MLE and the related asymptotic properties
which are useful in delving into the asymptotic performance of our proposed methods.
Section 4 considers the case where the existence of the change point is certain
but its location is unknown; two estimation methods are put forward to determine
the unknown change
point. The asymptotics of the estimators are also discussed and hence,
the asymptotic properties established
in Zhang~(2015) also hold in our proposed techniques.
The case where the existence and the location of the change point are
both unknown is explored in Section 5. We
develop an informational approach to detect the change point, and the
consistency of our methods is likewise theoretically demonstrated. Section 6 provides the
numerical implementation of our proposed methods on both simulated and observed financial
market data. The final section gives some concluding remarks.

\section{Description of the single-change point problem}
\noindent Our main consideration in this paper is the change-point estimation strategy
under a generalised Ornstein-Uhlenbeck process with only one change point.
We start with Zhang's framework (2015), which
assumes that a consistent
estimator exists for the unknown change point $\tau \in [0, T]$.
The model under examination is the generalised version of the
Ornstein-Uhlenbeck (OU) process with SDE representation
\begin{eqnarray}\label{ou1}
\textit{d}X_t&=&S(\theta,t,X_t)\textit{d}t+\sigma \textit{d}W_t,\quad 0<t\leq T,
\end{eqnarray}
where $S(\theta,t,X_t)=L(t)-a X_t=\sum_{i=1}^p\mu_i\varphi_i(t)-a X_t$, $i=1,...,p$,
$\theta=\theta^{(1)}=(\mu_1,...,\mu_p,-a)'$,
$'$ denotes the transpose of a matrix. Also, $W_t$ is a one-dimensional Brownian motion
defined on some probability space $(\Omega, {\cal F}, P).$\\
\ \\
\noindent In particular, we assume that there is one unknown change point $\tau=sT$, $0<s<1$ such that
\begin{eqnarray*}
S(\theta,t,X_t)&=&\sum_{i=1}^p\mu_i^{(1)}\varphi_i(t)-a^{(1)} X_t,\quad 0<t<\tau,\\
S(\theta,t,X_t)&=&\sum_{i=1}^p\mu_i^{(2)}\varphi_i(t)-a^{(2)} X_t,\quad \tau\leq t\leq T,
\end{eqnarray*}
where $\theta=\theta^{(1)}=(\mu_1^{(1)},...,\mu_p^{(1)}, -a^{(1)})'$, for $0<t<\tau$,
and $\theta=\theta^{(2)}=(\mu_1^{(2)},...,\mu_p^{(2)},a^{(2)})'$, for $\tau\leq t\leq T$.\\
\ \\
\noindent For the case when there is no change point, maximum likelihood estimators
for the drift parameters and their related asymptotic properties were derived
in Dehling, et al. (2010). These results are reviewed in the next section
and serve as a springboard for our theoretical discussion.

\section{Earlier MLE-based results and our new results}
\noindent This section consists of two subsections: (i) review of the results for the
MLE of the drift parameters (without change point) along
with the related asymptotic properties demonstrated in Dehling, et al. (2010);
and (ii) review of the MLE for the drift parameters (with one change point) and the related asymptotic
properties studied in Zhang (2015).  \\
\ \\
\noindent In Zhang (2015), however, asymptotic normality for the MLE estimator of the drift parameters
is derived under the assumption that the estimator is already consistent. In our case,
we shall show (instead of assume) that such an estimator of the change point is consistent
thereby proving the consistency properties of the proposed estimator.\\
\ \\
\noindent {\bf Notation:} The expressions ``$\xrightarrow[T\rightarrow \infty]{p}$", $``\xrightarrow[T\rightarrow \infty]{D}"$, and $``\xrightarrow[T\rightarrow \infty]{a.s.}"$ denotes convergence in probability, convergence in distribution, and convergence almost surely, respectively. The ``$O(\cdot)$'' stands for ``Big O'' describing the asymptotic behaviour of functions; i.e., for a sequence of random variables $U_n$ and a corresponding set of constants
$a_n$, $U_n=O_p(a_n)$ means $U_n/a_n$ is stochastically bounded in the sense that $\forall \epsilon > 0
,~~\exists~M >0
,~~\ni~P(\left|U_n/a_n \right| > M) < \epsilon,~~\forall n.$
The symbol ``small o'' means $U_n=o_p(a_n)$, i.e., $U_n/a_n$ converges in probability to zero as $n$ approaches an appropriate limit. Considering $U_n=o_p(a_n)$ is equivalent to $U_n/a_n=0_p(1)$,
convergence in probability is defined here as $\displaystyle \lim_{n \rightarrow \infty}\left(
P(|U_n/a_n)| \geq \epsilon \right)=0.$

\subsection{Maximum likelihood estimator of the drift parameters}
\noindent To gain some useful insights, we first consider the case where there is no change point
($\theta^{(1)}=\theta^{(2)}$). We review briefly the MLE of the drift
parameters proposed in Dehling, et al. (2010) under the following
assumptions.\\
\begin{asm}$\textrm{P}\left(\int_{0}^{T}S^2(\theta,t,X_t)<\infty\right)=1$,
for all $0<T<\infty$, for all $\theta\in\theta$.\end{asm}
\noindent With this assumption, Theorem 7.6 in Lipster and
Shiryayev (2001) may be used to find an explicit expression for the corresponding
likelihood function. \\
\ \\
\noindent Suppose that there is no any change point in
$[0,T]$. Then, let $\mathcal{C}[0,T]$ be the space of continuous, real-valued
function on $[0,T]$ and let $\mathcal{B}[0,T]$ be the Borel $\sigma$-field
associated with $\mathcal{C}[0,T]$. Let $P_B$ be the probability measure
generated by the Brownian motion on $(\mathcal{C}[0,T],\mathcal{B}[0,T])$,
i.e., $P_B(A)=P\{\omega: B\in A\}$, $A\in \mathcal{B}[0,T]$. Suppose further that
$P_X$ is the probability measure generated by the observation $X_T$
of the process with SDE specified in (\ref{ou1}). Then, the likelihood function
of $X_T$ is
\begin{equation}\label{lf1}
\mathcal{L}(\theta,X_T)=\frac{dP_X}{dP_B}(X_T)=\exp \left(\frac{1}{\sigma^2}
\int_{0}^TS(\theta,t,X_t)dX_t-\frac{1}{2\sigma^2}\int_{0}^TS^2(\theta,t,X_t)dX_t\right).
\end{equation}
Therefore, the MLE of the drift parameters is given by
\begin{equation}\label{mle}
\hat{\theta}=Q_{(0,T)}^{-1}\tilde{R}_{(0,T)}=\left( \frac{1}{T}Q_{(0,T)} \right)^{-1}
\frac{1}{T}\tilde{R}_{(0,T)},
\end{equation}
where
$$Q_{(0,T)}=
\left[\begin{array}{cccc}
    \int_0^T\varphi_1^2(t)dt &\dots& \int_0^T\varphi_1(t)\varphi_p(t)dt
    & -\int_0^T\varphi_1(t)X_tdt \\
    \dots&&& \\
    -\int_0^TX_t\varphi_1(t)dt &\dots & -\int_0^TX_t\varphi_p(t)dt & \int_0^TX_t^2dt
\end{array}\right],$$
and $\tilde{R}_{(0,T)}=(\int_0^T\varphi_1(t)dX_t,...,\int_0^T\varphi_p(t)dX_t,-\int_0^TX_tdX_t)'.$
\\
\ \\
\noindent Note that the MLE introduced above could be evaluated by applying the
Euler's discretisation to (\ref{ou1}), and then getting a linear model and applying the ordinary
least-squares estimation method to provide an estimator containing the Riemann and
Ito sums. Then, the OLS estimator will converge into the MLE estimator
as $\Delta t\rightarrow 0$. \\
\ \\
\noindent For the existence of $Q_{(0,T)}^{-1}$, it is shown in Remark 3
of Dehling, et al. (2010) that $TQ_{(0,T)}^{-1}$ exists almost
surely if $T$ is large enough. Moreover, Proposition 2.1.1 of
Zhang~(2015) states the positive definiteness of $\frac{1}{T}Q_{(0,T)}$
under the following assumption.

\begin{asm}For any $T>0$, the base function $\{\varphi_i(t),i=1,..,p\}$ is
Riemann-integrable on $[0,T]$ and satisfies
\begin{enumerate}
  \item Periodicity. That is, $\varphi_i(t+v)=\varphi_i(t)$, for all $i=1,...,p$ and $v$ is the
  period observed in the data.
  \item Orthogonality. That is, for all $j,k=1,...,p$, $\int_{0}^v\varphi_j(t)\varphi_k(t)dt$ is
  equal to $v$ if $j=k$ and 0 otherwise.
\end{enumerate}
\end{asm}

\begin{prn}[Proposition 2.1.1 of Zhang,~(2015)] \label{prn01}
Under Assumption 2, for any $T>0$ and $t\in [0,T]$, the base
functions $\{ \varphi_i(t), i=1,...,p \}$ are incomplete if and
only if $\frac{1}{T}Q_{(0,T)}$ is a positive definite matrix.
\end{prn}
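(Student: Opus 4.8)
The plan is to read $Q_{(0,T)}$ as a Gram matrix and reduce the whole statement to a linear-independence question. Writing $\langle f,g\rangle=\int_0^T f(t)g(t)\,dt$ for the $L^2[0,T]$ inner product and setting $\varphi_{p+1}:=-X$, one checks entry by entry that the $(i,j)$ entry of $Q_{(0,T)}$ equals $\langle\varphi_i,\varphi_j\rangle$, so that for every column vector $\mathbf{c}=(c_1,\dots,c_{p+1})'$,
\[
\mathbf{c}'Q_{(0,T)}\mathbf{c}=\Big\langle \sum_{i=1}^{p+1}c_i\varphi_i,\ \sum_{j=1}^{p+1}c_j\varphi_j\Big\rangle=\int_0^T\Big(\sum_{i=1}^p c_i\varphi_i(t)-c_{p+1}X_t\Big)^2 dt\ \ge 0.
\]
Hence $\tfrac1T Q_{(0,T)}$ is automatically positive semi-definite, and it is positive definite precisely when the integrand on the right fails to vanish (a.e.) for every $\mathbf{c}\neq 0$; that is, precisely when $\{\varphi_1,\dots,\varphi_p,X\}$ is linearly independent in $L^2[0,T]$. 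This identity is the engine of the proof.

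With this reduction I would prove the two implications, reading completeness of the base functions as the property that $X$ is reconstructible from them, i.e. $X_t=\sum_{i=1}^p\alpha_i\varphi_i(t)$ a.e. on $[0,T]$ for some constants, incompleteness being the negation. Suppose first the system is incomplete. For a nonzero $\mathbf{c}$ with $c_{p+1}\neq 0$, the combination $\sum_i c_i\varphi_i-c_{p+1}X$ cannot vanish, since that would express $X$ in the span of the base functions; for $c_{p+1}=0$ it reduces to $\sum_i c_i\varphi_i$, which is nonzero because the orthogonality in Assumption~2 makes $\varphi_1,\dots,\varphi_p$ linearly independent. In either case the quadratic form is strictly positive, so $\tfrac1T Q_{(0,T)}$ is positive definite. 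Conversely, if $\tfrac1T Q_{(0,T)}$ is positive definite, then no nonzero $\mathbf{c}$ annihilates the integrand; choosing $c_{p+1}=1$ rules out any identity $X=\sum_i\alpha_i\varphi_i$, so the base functions cannot reconstruct $X$ and are therefore incomplete.

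The step I expect to be the main obstacle is verifying, for \emph{arbitrary} $T>0$, that the base functions themselves are linearly independent on $[0,T]$ — the ingredient used in the $c_{p+1}=0$ case. Assumption~2 only furnishes orthogonality over one period $[0,v]$, and a nontrivial combination $\sum_i c_i\varphi_i$ could in principle vanish on a short interval with $T<v$ while remaining orthogonal over the full period. I would handle this by exploiting periodicity to split $\int_0^T=\lfloor T/v\rfloor\int_0^v+\int_0^{r}$ with $0\le r<v$, noting that when $T$ is a positive integer multiple of $v$ the top-left $p\times p$ block is simply $T\,I_p$ and independence is immediate, and that for the trigonometric-type base functions of interest a combination vanishing on a set of positive measure must vanish identically, contradicting orthogonality. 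A secondary, purely interpretive point is to confirm that the notion of incompleteness in the statement is exactly ``$X$ is not a finite linear combination of the $\varphi_i$,'' which is what pairs the Gram-matrix criterion with the completeness dichotomy.
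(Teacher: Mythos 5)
The paper never actually proves this proposition: it is imported verbatim as a citation (Proposition 2.1.1 of Zhang, 2015), and the text immediately afterwards restricts attention to $T=Nv$. So there is no in-paper argument to compare yours against; I can only judge the proposal on its own terms. On those terms, your central reduction is correct and is surely the intended one: with $\varphi_{p+1}:=-X$, the matrix $Q_{(0,T)}$ is exactly the Gram matrix of $\{\varphi_1,\dots,\varphi_p,-X\}$ in $L^2[0,T]$, the quadratic-form identity you write is right, and positive definiteness of $\frac{1}{T}Q_{(0,T)}$ is therefore equivalent, pathwise for each realisation of $X$, to linear independence of $\{\varphi_1,\dots,\varphi_p,X\}$ on $[0,T]$. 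Given that equivalence, your two implications go through, provided (a) the $\varphi_i$ themselves are linearly independent on $[0,T]$, and (b) ``incomplete'' means exactly ``$X$ is not a.e.\ equal to a linear combination of the $\varphi_i$ on $[0,T]$.''

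Both provisos are genuine gaps rather than side remarks, and you were right to flag them. First, the interpretive one: you adopt the definition of incompleteness under which the proposition is a near-tautology. If Zhang's ``incomplete'' instead carries the usual functional-analytic meaning (the orthogonal system $\{\varphi_i\}$ fails to span the relevant function space), then the direction ``incomplete $\Rightarrow$ positive definite'' needs a further, probabilistic ingredient that your proposal does not contain: that almost surely the path of $X$, which has a Brownian component, does not lie in the fixed $p$-dimensional span of the $\varphi_i$ (for instance via quadratic variation, since $X$ accumulates quadratic variation $\sigma^2 T>0$ while combinations of bounded-variation base functions accumulate none). Second, the ``for any $T>0$'' clause is genuinely problematic under Assumption 2 alone, not merely a technical nuisance: orthogonality is only over one period $[0,v]$, and one can take two orthonormal periodic functions supported in disjoint subintervals of $[v/2,v]$; both vanish identically on $[0,v/2]$, so for $T\le v/2$ the top-left block of $Q_{(0,T)}$ is the zero matrix and positive definiteness fails even though the system is incomplete under any reasonable reading. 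Hence the statement as quoted cannot be proved for arbitrary $T$ from Assumption 2, and your fallback to ``trigonometric-type'' functions is an additional hypothesis, not something Assumption 2 supplies. Your argument is airtight precisely in the regime the paper actually uses, $T=Nv$, where, as you note, the top-left block of $\frac{1}{T}Q_{(0,T)}$ is $I_p$ and independence of the base functions is immediate.
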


\noindent Hence, for the rest of this paper, we assume that the sample
size $T$ is an integral multiple of the period length $v$,
i.e., $T=Nv$ for some integer $N$. Without loss of generality,
we let $v=1$ and this implies that $\varphi_j(t+1)=\varphi_j(t)$.\\
\ \\
\noindent Inspired by the results of Dehling, et al.~(2010) and
Dehling, et al.~(2014), Zhang~(2015) first studied
the case where the change point in (\ref{ou1}) exists and known
to be $\tau^0=s^0T$, $0< s^0< 1$ and derived the results
in estimating $\theta^{(1)}$ and $\theta^{(2)}$. In particular,
\begin{equation}\label{mle2}
\hat{\theta}^{(1)}=Q_{(0,s^0T)}^{-1}\tilde{R}_{(0,s^0T)}=
\theta^{(1)}+\left(\frac{1}{T}Q_{(0,s^0T)}\right)^{-1}\frac{1}{T}{R}_{(0,s^0T)}
\end{equation}
and
\begin{equation}\label{mle3}
\hat{\theta}^{(2)}=Q_{(s^0T,T)}^{-1}\tilde{R}_{(s^0T,T)}=
\theta^{(2)}+\left(\frac{1}{T}Q_{(s^0T,T)}\right)^{-1}\frac{1}{T}{R}_{(s^0T,T)},
\end{equation}
where
 ${R}_{(a,b)}=\left(\int_a^b\varphi_1(t)dW_t,...,\int_a^b\varphi_p(t)dW_t,-\int_a^b X_tdW_t\right)'$ for $0\leq a<b\leq T$. \\
 \ \\
\noindent Also, the asymptotic properties of the above proposed MLEs
are well-studied in Dehling, et al. (2010) for the case
when there is no change point and Zhang~(2015) for the case of a
single change point. To summarise these results, we first go back to the
case when there is no change point. By (\ref{ou1}), we have
$$\int_0^T\varphi_i(t)dX_t=\sum_{j=1}^p\mu_j\int_0^T\varphi_i(t)
\varphi_j(t)dt-a\int_0^T\varphi_i(t)X_tdt+\sigma\int_0^T\varphi_i(t) dW_t,$$
for $i=1,...,p$, and
$$\int_0^TX_tdX_t=\sum_{j=1}^p\mu_j\int_0^TX_t\varphi_j(t)dt-a\int_0^TX_t^2dt
+\sigma\int_0^TX_t dW_t.$$
It follows that
$$\hat{\theta}=Q_{(0,T)}^{-1}\tilde{R}_{(0,T)}=\theta+\sigma Q_{(0,T)}^{-1}{R}_{(0,T)}
=\theta+\sigma TQ_{(0,T)}^{-1}\frac{1}{T}R_{(0,T)}.$$
\noindent By Ito's lemma, the SDE in (\ref{ou1}) has the
solution
\begin{equation}\label{sol1}X_t=e^{-at}X_0+h(t)+N_t,\end{equation}
where
$\displaystyle{h(t)=e^{-at}\sum_{i=1}^p\mu_i\int_{0}^te^{as}\varphi_i(s)ds}$
and
$\displaystyle{N_t=\sigma e^{-at}\int_{0}^te^{as}dW_s}.$\\
\ \\
\noindent The uniform boundedness of solution (\ref{sol1}) was studied in Zhang~(2015). Using similar methods employed for the proof of Proposition 2.2.1 in Zhang~(2015) together with the mean-reversion property in the drift term of the OU process, one may verify that the SDE (\ref{ou1}) admits a strong and unique solution that is uniformly bounded in $L^2$, and
\begin{equation}\label{sol2}\sup_{t\geq 0}\mathrm{E}(X_t^2)\leq K_1,
\end{equation}
for $0<K_1<\infty$.\\
\ \\
\noindent Note that the process $\{X_t,t\geq 0\}$ is not stationary in the ordinary sense.
Thus, it is impossible to apply the ergodic theorem directly. To go around this problem,
Dehling, et al. (2010) introduced a stationary solution for $t\in \mathbb{R}$
instead of $t\geq 0$. That is, \begin{equation}\label{sol3}\tilde{X}_t=\tilde{h}(t)
+\tilde{N}_t,\end{equation} where
$\displaystyle{\tilde{h}(t)=e^{-at}\sum_{i=1}^p\mu_i\int_{-\infty}^te^{as}\varphi_i(s)ds}$
and
$\displaystyle{\tilde{N}_t=\sigma e^{-at}\int_{-\infty}^te^{as}d\tilde{B}_s},$
with $(\tilde{B}_s)_{s\in  \mathbb{R}}$ denotes a bilateral Brownian motion, i.e.,
$$\tilde{B}_s={B}_s \mathbf{1}_{ \mathbb{R}_{+}}(s)+\bar{B}_{-s}
\mathbf{1}_{ \mathbb{R}_{-}}(s).$$
\ \\
\noindent Here, $({B}_s)_{s\geq 0}$ and $(\bar{B}_s)_{s\geq 0}$ are two independent
standard Brownian motions and $\mathbf{1}_A$ stands for the indicator function
over the set $A$. It follows from (\ref{sol2}) and Lemma~{4.3} in Dehling,
et al. (2010) that the sequence of $\mathcal{C}[0,1]$-valued random
variables $W_k(s)=\tilde{X}_{k-1+s}$, $0\leq s\leq 1$, $k\in \mathbb{N}$ is
stationary and ergodic. Then, by Proposition 4.5 of Dehling, et al.
(2010),
\begin{equation}\label{cx2}\frac{1}{T}\int_0^T\tilde{X}_t\varphi_j(t)dt
\xrightarrow[T\rightarrow \infty]{a.s.}\int_0^1\tilde{h}(t)\varphi_j(t)dt\end{equation}
and
\begin{equation}\label{cx3}\frac{1}{T}\int_0^T\tilde{X}_t^2dt\xrightarrow[T\rightarrow \infty]
{a.s.}\int_0^1\tilde{h}^2(t)dt+\frac{\sigma^2}{2a}.\end{equation}
\ \\
\noindent Moreover, it follows from Lemma 4.4 in Dehling, et al. (2010) that under Assumption 2,
\begin{equation}\label{cx1}|\tilde{X}_t-X_t|\xrightarrow[t \rightarrow \infty]{a.s.} 0.\end{equation}

\noindent Using (\ref{cx1}), the following properties hold:
$$\frac{1}{T}\int_0^T\tilde{X}_t\varphi_j(t)dt-\frac{1}{T}\int_0^T{X}_t\varphi_j(t)dt
\xrightarrow[T\rightarrow \infty]{a.s.} 0$$
$$\mbox{and}~~\frac{1}{T}\int_0^T\tilde{X}_t^2dt-\frac{1}{T}\int_0^T{X}_t^2dt
\xrightarrow[T\rightarrow \infty]{a.s.} 0.$$
Then it follows from  (\ref{cx2}) and (\ref{cx3}) that
$$\frac{1}{T}\int_0^T{X}_t\varphi_j(t)dt\xrightarrow[T\rightarrow
\infty]{a.s.}\int_0^1\tilde{h}(t)\varphi_j(t)dt$$
$$\mbox{and}~~\frac{1}{T}\int_0^T{X}_t^2dt\xrightarrow[T\rightarrow \infty]{a.s.}
\int_0^1\tilde{h}^2(t)dt+\frac{\sigma^2}{2a}.$$
Hence,
\begin{equation}\label{conQ-0}TQ_{(0,T)}^{-1}\xrightarrow[T\rightarrow
\infty]{a.s.}\Sigma_0^{-1},\end{equation}
where
$$\Sigma_0=\begin{bmatrix}
    I_p  & \Lambda \\
    \Lambda'   & w \\
\end{bmatrix},$$
with ${\Lambda}_{(0,T)}=(\int_0^1\tilde{h}(t)\varphi_1(t)dt,...,\int_0^1
\tilde{h}(t)\varphi_p(t)dt)'$ and $w=\int_0^1\tilde{h}^2(t)dt+\frac{\sigma^2}{2a}$.\\
\ \\
\noindent Furthermore, under Assumptions 1--2, the following properties hold
for $R_{(0,T)}$.
\begin{enumerate}
\item $\{R_{(0,T)}, T>0\}$ is a  martingale.
\item $\frac{1}{T}R_{(0,T)}\xrightarrow[T\rightarrow \infty]{a.s.}0$.
\item $\frac{1}{\sqrt{T}}R_{(0,T)}\xrightarrow[T\rightarrow
\infty]{D}R\sim \mathcal{N}_{p+1}(0,\Sigma_0)$.
\end{enumerate}
For detailed proofs of the above properties 1-3, see Theorem A.4.3., Propositions 2.1.4 and 2.1.6 in
Zhang~(2015), respectively. \\
\ \\
\noindent Based on the above properties, it follows from the Slutsky's Theorem that
$$\frac{1}{\sqrt{T}}(\hat{\theta}-\theta)\xrightarrow[T\rightarrow \infty]
{D}\rho \sim \mathcal{N}_{p+1}(0,\Sigma_0^{-1}).$$
In Chapter 2 and pertinent proofs in Appendix B of Zhang~(2015), the above asymptotic properties are extended to the case
of a single change point in the following way.\\
\ \\
\noindent Write
$$\displaystyle{\tilde{h}^{(1)}(t):=e^{-a^{(1)}t}\sum_{i=1}^p\mu_i^{(1)}
\int_{-\infty}^te^{a^{(1)}s}\varphi_i(s)ds}$$
and
$$\displaystyle{\tilde{h}^{(2)}(t):=e^{-a^{(2)}t}\sum_{i=1}^p\mu_i^{(2)}
\int_{-\infty}^te^{a^{(2)}s}\varphi_i(s)ds}.$$

\noindent Then
\begin{equation}\label{cxs1}\frac{1}{T}\int_0^{s^0T}\tilde{X}_t\varphi_j(t)dt
\xrightarrow[T\rightarrow \infty]{a.s.}s^0\int_0^1(\tilde{h}^{(1)})(t)\varphi_j(t)dt\end{equation}
and
\begin{equation}\label{cxs2}\frac{1}{T}\int_0^{s^0T}\tilde{X}_t^2dt
\xrightarrow[T\rightarrow \infty]{a.s.}s^0\left(\int_0^1(\tilde{h}^{(1)})^2(t)dt
+\frac{\sigma^2}{2a^{(1)}}\right),\end{equation}
where $\tilde{X}_t$ is the process defined in (\ref{sol3}). Similarly,
\begin{equation}\label{cxs3}\frac{1}{T}\int_{s^0T}^{T}\tilde{X}_t\varphi_j(t)dt
\xrightarrow[T\rightarrow \infty]{a.s.}(1-s^0)\int_0^1(\tilde{h}^{(2)})(t)
\varphi_j(t)dt\end{equation}
and
\begin{equation}\label{cxs4}\frac{1}{T}\int_{s^0T}^{T}\tilde{X}_t^2dt
\xrightarrow[T\rightarrow \infty]{a.s.}(1-s^0)\left(\int_0^1(\tilde{h}^{(2)})^2(t)dt
+\frac{\sigma^2}{2a^{(2)}}\right).\end{equation}
Using (\ref{cx1}), the following properties hold:
\begin{eqnarray*}
&&\frac{1}{T}\int_0^{s^0T}\tilde{X}_t\varphi_j(t)dt-\frac{1}{T}\int_0^{s^0T}{X}_t
\varphi_j(t)dt\xrightarrow[T\rightarrow \infty]{a.s.} 0,\\
&&\frac{1}{T}\int_0^{s^0T}\tilde{X}_t^2dt-\frac{1}{T}\int_0^{s^0T}{X}_t^2dt
\xrightarrow[T\rightarrow \infty]{a.s.} 0,\\
&&\frac{1}{T}\int_{s^0T}^{T}\tilde{X}_t\varphi_j(t)dt-\frac{1}{T}\int_{s^0T}^{T}{X}_t
\varphi_j(t)dt\xrightarrow[T\rightarrow \infty]{a.s.} 0,\\
&&\mbox{and}~~\frac{1}{T}\int_{s^0T}^{T}\tilde{X}_t^2dt-\frac{1}{T}\int_{s^0T}^{T}{X}_t^2dt
\xrightarrow[T\rightarrow \infty]{a.s.} 0.\end{eqnarray*}
Hence, it follows that
\begin{eqnarray}
\frac{1}{T}Q_{(0,s^0T)}&\xrightarrow[T\rightarrow \infty]{a.s.}&{s^0}
\Sigma_1,\label{conQ-1}\\
TQ_{(0,s^0T)}^{-1}&\xrightarrow[T\rightarrow \infty]{a.s.}&\frac{1}{s^0}
\Sigma_1^{-1},\label{conQ-2}\\
\frac{1}{T}Q_{(s^0T,T)}&\xrightarrow[T\rightarrow \infty]{a.s.}&(1-s^0)
\Sigma_2,\label{conQ-3}\end{eqnarray}
and
\begin{equation}TQ_{(s^0T,T)}^{-1}\xrightarrow[T\rightarrow \infty]{a.s.}
\frac{1}{(1-s^0)}\Sigma_2^{-1},\label{conQ-4}\end{equation}
where
$$\Sigma_1=\begin{bmatrix}
    I_p  & \Lambda_1 \\
    \Lambda_1'   & w_1 \\
\end{bmatrix}\quad \mbox{and} \quad
\Sigma_2=\begin{bmatrix}
    I_p  & \Lambda_2 \\
    \Lambda_2'   & w_2 \\
\end{bmatrix}$$
with ${\Lambda}_{i}=\left (\int_0^1\tilde{h}^{(i)}(t)\varphi_1(t)dt,...,
\int_0^1\tilde{h}^{(i)}(t)\varphi_p(t)dt \right)'$ and $w_i=\int_0^1(\tilde{h}^{(i)})^2(t)dt
+\frac{\sigma^2}{2a^{(i)}}$, $i=1,2$.
Furthermore, it follows from Proposition 2.2.6 in Zhang~(2015) that both $\Sigma_1$
and $\Sigma_2$ are positive definite provided that Assumptions 1--2 hold.

\subsection{New results for the analysis of asymptotic properties}
\noindent Based on the established results in Subsection 3.1, we provide two propositions which
are useful in illustrating the asymptotic properties of the estimator for the change point.

\begin{prn}\label{conQ1}For any $\eta\in (0,s^0]$, we have
\begin{equation}\frac{1}{T}Q_{(0,\eta T)}\xrightarrow[T\rightarrow \infty]{a.s.}
\eta\Sigma_1\label{conQ-5},\end{equation}
\begin{equation}\frac{1}{T}Q_{(\eta T,s^0T)}\xrightarrow[T\rightarrow \infty]{a.s.}
(s^0-\eta)\Sigma_1\label{conQ-6},\end{equation}
and
\begin{equation}\frac{1}{T}Q_{(\eta T, T)}\xrightarrow[T\rightarrow \infty]{a.s.}
(s^0-\eta)\Sigma_1 + (1-s^0)\Sigma_2\label{conQ-7}.\end{equation}
\end{prn}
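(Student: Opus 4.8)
The plan is to prove \eqref{conQ-5} directly from the ergodic machinery already assembled in Subsection 3.1, and then to deduce \eqref{conQ-6} and \eqref{conQ-7} by exploiting the additivity of $Q_{(a,b)}$ over adjacent intervals. The crucial observation is that for every $\eta\in(0,s^0]$ the whole interval $[0,\eta T]$ lies inside the first regime $[0,s^0T]$, so on this interval the observed process $X_t$ is driven by the parameters $\theta^{(1)}$ and is asymptotically indistinguishable from the stationary, ergodic solution $\tilde X_t$ built from $\tilde h^{(1)}$.

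First I would establish the three scalar limits that make up the block entries of $\frac{1}{T}Q_{(0,\eta T)}$. For the periodic block, periodicity and orthogonality (Assumption 2, with $v=1$) give $\int_0^1\varphi_j\varphi_k\,dt=\delta_{jk}$, and since $\varphi_j\varphi_k$ is $1$-periodic its running average satisfies $\frac{1}{T}\int_0^{\eta T}\varphi_j(t)\varphi_k(t)\,dt\to\eta\,\delta_{jk}$, the fractional end-period contributing only an $O(1/T)$ remainder. For the stochastic blocks I would reuse, with $s^0$ replaced by $\eta$, the argument that produced \eqref{cxs1}--\eqref{cxs2}: the sequence $W_k(s)=\tilde X_{k-1+s}$ is stationary and ergodic, so Proposition~4.5 of Dehling et al.~(2010) yields
$$\frac{1}{T}\int_0^{\eta T}\tilde X_t\varphi_j(t)\,dt\xrightarrow[T\rightarrow\infty]{a.s.}\eta\int_0^1\tilde h^{(1)}(t)\varphi_j(t)\,dt\quad\text{and}\quad\frac{1}{T}\int_0^{\eta T}\tilde X_t^2\,dt\xrightarrow[T\rightarrow\infty]{a.s.}\eta\Bigl(\int_0^1(\tilde h^{(1)})^2(t)\,dt+\tfrac{\sigma^2}{2a^{(1)}}\Bigr).$$
Transferring from $\tilde X$ to $X$ through \eqref{cx1} (exactly as in the displays following \eqref{cxs4}) shows the same limits hold with $X_t$ in place of $\tilde X_t$. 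Collecting these entrywise limits into the block form of $Q$ gives $\frac{1}{T}Q_{(0,\eta T)}\to\eta\Sigma_1$, which is \eqref{conQ-5}.

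Next, \eqref{conQ-6} follows at once from the additivity $Q_{(0,s^0T)}=Q_{(0,\eta T)}+Q_{(\eta T,s^0T)}$, which holds because every entry is an integral over $[0,s^0T]$ that splits at $\eta T$. Rearranging and invoking \eqref{conQ-1} together with the just-proved \eqref{conQ-5} gives $\frac{1}{T}Q_{(\eta T,s^0T)}\to s^0\Sigma_1-\eta\Sigma_1=(s^0-\eta)\Sigma_1$. In the same way, writing $Q_{(\eta T,T)}=Q_{(\eta T,s^0T)}+Q_{(s^0T,T)}$ and combining \eqref{conQ-6} with \eqref{conQ-3} yields $\frac{1}{T}Q_{(\eta T,T)}\to(s^0-\eta)\Sigma_1+(1-s^0)\Sigma_2$, which is \eqref{conQ-7}.

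The step I expect to require the most care is the first one: justifying that the ergodic limit scales linearly with the truncation level $\eta$ and that the non-integer endpoint $\eta T$ does not disturb the limit. Concretely, one must control the contribution of the last incomplete period $[\lfloor\eta T\rfloor,\eta T]$; this is handled by the uniform $L^2$-bound \eqref{sol2} on $X_t$ and the boundedness of the $\varphi_j$, which force that remainder to be $o(1)$ after division by $T$. Everything else is a direct transcription of the $\eta=s^0$ case already carried out in the excerpt.
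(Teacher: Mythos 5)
Your proof is correct and follows essentially the same route as the paper's: the paper likewise obtains \eqref{conQ-5} by re-running the argument behind \eqref{conQ-1} with the upper integration limit $s^0T$ replaced by $\eta T$, and then gets \eqref{conQ-6} and \eqref{conQ-7} by adjusting integration limits and combining, which is exactly your additivity-of-$Q_{(a,b)}$ argument made slightly more explicit. If anything your write-up is the more careful one, since you spell out the entrywise ergodic limits and correctly note that \eqref{conQ-7} also needs the second-regime limit \eqref{conQ-3}, whereas the paper loosely cites only \eqref{conQ-5} and \eqref{conQ-6} at that step.
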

\begin{proof} Result (\ref{conQ-5}) follows directly from (\ref{conQ-2}) with the upper
bound of the integrals reduces from $s^0T$ to $\eta T$. Similarly, (\ref{conQ-6})
follows directly from (\ref{conQ-2}) with the lower bound of the integrals
increases from $0$ to $\eta T$. Finally, (\ref{conQ-7}) could be obtained by combining
(\ref{conQ-5}) and (\ref{conQ-6}).
\end{proof}

\begin{prn}\label{conQ2}For any $\eta\in (s^0,1]$, we have
\begin{equation}\frac{1}{T}Q_{(\eta T,T)}\xrightarrow[T\rightarrow \infty]{a.s.}
(1-\eta)\Sigma_2\label{conQ-8}.\end{equation}
\begin{equation}\frac{1}{T}Q_{(s^0T,\eta T)}\xrightarrow[T\rightarrow \infty]
{a.s.}(\eta-s^0)\Sigma_2\label{conQ-9}.\end{equation}
\begin{equation}\frac{1}{T}Q_{(0,\eta T)}\xrightarrow[T\rightarrow \infty]{a.s.}s^0\Sigma_1
+ (\eta-s^0)\Sigma_2\label{conQ-10}.\end{equation}
\end{prn}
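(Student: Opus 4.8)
The plan is to mirror the argument used for Proposition~\ref{conQ1}, but now exploiting that $\eta>s^0$ forces each window to lie on, or straddle, the \emph{second} regime. The core fact I would rely on is that within a single regime the rescaled information matrix accumulates linearly in the relative length of the integration window: for any $0\le c<d\le 1$ with $[cT,dT]$ contained in regime $i$, the stationary–ergodic machinery behind (\ref{cxs1})–(\ref{cxs4}) gives $\frac{1}{T}\int_{cT}^{dT}\tilde{X}_t\varphi_j(t)dt\xrightarrow[T\rightarrow\infty]{a.s.}(d-c)\int_0^1\tilde{h}^{(i)}(t)\varphi_j(t)dt$ and $\frac{1}{T}\int_{cT}^{dT}\tilde{X}_t^2dt\xrightarrow[T\rightarrow\infty]{a.s.}(d-c)\left(\int_0^1(\tilde{h}^{(i)})^2(t)dt+\frac{\sigma^2}{2a^{(i)}}\right)$, while the base-function blocks obey $\frac{1}{T}\int_{cT}^{dT}\varphi_j(t)\varphi_k(t)dt\to(d-c)\delta_{jk}$ by the periodicity and orthogonality of Assumption~2. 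The passage from $\tilde{X}$ to $X$ is justified by (\ref{cx1}), exactly as in Subsection~3.1.

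First I would prove (\ref{conQ-8}). Since $\eta>s^0$, the interval $[\eta T,T]$ lies entirely in the second regime, so the limit is read off from (\ref{conQ-3}) by merely raising the lower bound of the integrals from $s^0T$ to $\eta T$, which shrinks the relative length from $(1-s^0)$ to $(1-\eta)$; this is the exact analogue of how (\ref{conQ-6}) was obtained by adjusting a bound in the regime-1 convergence. Next, (\ref{conQ-9}) follows the same way by instead lowering the upper bound from $T$ to $\eta T$ in the regime-2 convergence, giving relative length $(\eta-s^0)$ and hence the limit $(\eta-s^0)\Sigma_2$, paralleling (\ref{conQ-5}). Finally, (\ref{conQ-10}) is obtained from the additivity of the defining integrals over adjacent windows, $Q_{(0,\eta T)}=Q_{(0,s^0T)}+Q_{(s^0T,\eta T)}$: the first summand contributes $s^0\Sigma_1$ by (\ref{conQ-1}) since it sits wholly in regime~1, the second contributes $(\eta-s^0)\Sigma_2$ by (\ref{conQ-9}) just established, and summing yields $s^0\Sigma_1+(\eta-s^0)\Sigma_2$, mirroring the combination step that produced (\ref{conQ-7}).

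The main obstacle I anticipate lies not in the algebra of splitting intervals but in justifying that the ergodic limits (\ref{cxs1})–(\ref{cxs4}), stated only for the two canonical windows $[0,s^0T]$ and $[s^0T,T]$, remain valid on a sub-window $[s^0T,\eta T]$ whose \emph{both} endpoints grow with $T$. Concretely, I would verify that the number of complete unit periods inside $[s^0T,\eta T]$ grows like $(\eta-s^0)T$, so that the block form of Proposition~4.5 of Dehling, et al.~(2010) applies to the stationary, ergodic sequence $W_k(s)=\tilde{X}_{k-1+s}$, and that the fractional period at either endpoint contributes at most one block of that sequence, whose normalised contribution $\frac{1}{T}(\cdot)$ vanishes almost surely (via the standard fact that $\max_{k\le n}\xi_k/n\to 0$ for an integrable stationary sequence $\xi_k$). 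Under the standing convention $T=Nv$ with $v=1$ this boundary bookkeeping is routine, but it is the one place where the mirror-image reduction to the earlier propositions requires an explicit check rather than a direct citation.
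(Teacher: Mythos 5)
Your proposal is correct and takes essentially the same approach as the paper: (\ref{conQ-8}) and (\ref{conQ-9}) are read off from (\ref{conQ-3}) by adjusting the lower and upper integration bounds, and (\ref{conQ-10}) by additivity of $Q$ over adjacent windows. In fact your combination---$Q_{(0,\eta T)}=Q_{(0,s^0T)}+Q_{(s^0T,\eta T)}$, using (\ref{conQ-1}) together with (\ref{conQ-9})---is the accurate one, since the paper's own wording (``combining (\ref{conQ-8}) and (\ref{conQ-9})'') is a slip (that sum would reconstruct $Q_{(s^0T,T)}$, not $Q_{(0,\eta T)}$), and your explicit boundary-period check for applying the ergodic limits on sub-windows whose endpoints both grow with $T$ is a careful touch the paper leaves implicit.
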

\begin{proof} Result (\ref{conQ-8}) follows directly from (\ref{conQ-3}) with the
lower bond of the integrals increases from $s^0T$ to $\eta T$. Similarly,
(\ref{conQ-9}) follows directly from (\ref{conQ-3}) with the upper bond of
the integrals reduces from $T$ to $\eta T$ and  (\ref{conQ-10}) is established
by combining (\ref{conQ-8}) and (\ref{conQ-9}).  \end{proof}

\noindent Moreover, for the case where the change point is unknown,
Zhang~(2015) assumed that there exists a consistent estimator of
the unknown change point and derives the same asymptotic properties for
the drift parameters based on the consistent estimator assumption of a change point.

\section{Two methods in the estimation of the change point}\label{scpe}
\noindent We develop the least sum of squared error (LSSE)
and maximum log-likelihood (MLL) methods to yield an estimator for
the unknown change point $\tau$,
and investigate its consistency under these two methods.

\subsection{Least sum of squared error method}
\noindent This subsection first introduces the LSSE method then studies the consistency
of the proposed estimator. To calculate the residuals, we apply the Euler-Maruyama
discretisation method to (\ref{ou1}). Consider a partition
$0=t_{0}<...<t_{n}=T$ on a given time period $[0,T]$  with a constant increment $\Delta_t=t_{i+1}-t_i.$
Hence,
$Y_i=X_{t_{i+1}}-X_{t_i}$ and $Z_{i}=(\varphi_1(t_i),...,\varphi_p(t_i),-X_{t_i})(\Delta_t)$,
and the discretized process is given by
\begin{eqnarray}\label{ou2}
Y_{i}&=&Z_{i}\theta+\epsilon_i,\quad t_i\in[0,T],
\end{eqnarray}
where $\epsilon$ is the error term given by $\sigma\sqrt{\Delta_t}N$, and $N$ is
the standard normal term. In this case, we could use the least-squared (LS) method to
estimate the change point. The details of the LS method will be discussed in the next section.
Based on (\ref{ou2}), let $\tau^0$ be the exact value of the unknown change-point
$\tau$. Then, $\tau^0$ can be estimated using the least sum of squared errors (SSE) method
described as
\begin{eqnarray}\label{cp1}
\hat{\tau}=\arg\min_{\tau} SSE(\tau),
\end{eqnarray}
where
\begin{eqnarray}\label{ssr1}
SSE(\tau)&=&\sum_{t_i\in[0,T]}(Y_i-Z_i\hat{\theta}(\tau))'(Y_i-Z_i\hat{\theta}(\tau))
\end{eqnarray}
and $\hat{\theta}(\tau)$ is the estimator of $\theta$ with the change
point given by $\tau$. More precisely, from Zhang~(2015), $\hat{\theta}=
(\hat{\theta}^{(1)}, \hat{\theta}^{(2)})$ where
\begin{eqnarray*}
\hat{\theta}^{(1)}&=&Q_{(0,\hat{\tau})}^{-1}\tilde{R}_{(0,\hat{\tau})}\quad \textrm{and}\quad
\hat{\theta}^{(2)}=Q_{(\hat{\tau},T)}^{-1}\tilde{R}_{(\hat{\tau},T)}.
\end{eqnarray*}
\ \\
\noindent {\bf Consistency of the proposed estimator} \\
Under Assumptions 1--2, $\sum_{t_i\in [0,\tau^0]}Z_i'Z_i$ and
$\sum_{t_i\in (\tau^0,T]}Z_i'Z_i$, the respective discretised versions of
$Q_{(0,{\tau}^0)}$ and $Q_{({\tau}^0,T)}$ are both positive definite with probability 1 provided that the base
functions $\{ \varphi_i(t), i=1,...,p \}$ are incomplete. Moreover, it follows from Proposition 2.2.6 in Zhang~(2015) that both
$\frac{1}{s^0T}Q_{(0,{\tau}^0)}$ and $\frac{1}{(1-s^0)T}Q_{({\tau}^0,T)}$ converge in probability to some positive definite matrices for large $T$, and so are their respective discretised versions. Hence, for large $T$, it is reasonable to impose a useful assumption in proving the consistency of the estimator of the change point.

\begin{asm}\label{asm3}Suppose that there exists an
$L_0>0$ such that for all $L>L_0$ the minimum eigenvalues of
$\frac{1}{L}\sum_{t_i\in(\tau^0,\tau^0+L]}Z_i'Z_i$ and of
$\frac{1}{L}\sum_{t_i\in(\tau^0-L,\tau^0]}Z_i'Z_i$, as well as their respective continuous-time
versions $\frac{1}{L}Q_{(\tau^0,\tau^0+L)}$ and $\frac{1}{L}Q_{(\tau^0-L,\tau^0]}$,
are all bounded away from 0.\end{asm}
\ \\
\noindent For more details about the above assumption, reader is referred to Perron and Qu (2006) (see also Chen and Nkurunziza~(2015)). Below are two propositions pertinent to the
consistency of the rate of change point specified by $\hat{s}=\hat{\tau}/T$,
where $\hat{\tau}$ is given by (\ref{cp1}).
\begin{prn}\label{prn13}
Suppose that $\theta^{(1)}-\theta^{(2)}$, the shift in the drift parameters, is of fixed non-zero magnitude independent of $T$. Then, under Assumptions 1--\ref{asm3}, $\hat{s}-s^0\xrightarrow[T\rightarrow \infty]{P}0$.
\end{prn}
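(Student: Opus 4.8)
The plan is to recast the minimisation in~(\ref{cp1}) as a maximisation of a profile objective and then to study its normalised limit. Since the total sum of squares $\sum_{t_i\in[0,T]}Y_i^2$ does not depend on $\tau$, substituting the segment-wise estimators $\hat\theta^{(1)}(\tau)=Q_{(0,\tau)}^{-1}\tilde R_{(0,\tau)}$ and $\hat\theta^{(2)}(\tau)=Q_{(\tau,T)}^{-1}\tilde R_{(\tau,T)}$ into~(\ref{ssr1}) gives
\begin{equation*}
SSE(\tau)=\sum_{t_i\in[0,T]}Y_i^2-V(\tau),\qquad V(\tau):=\tilde R_{(0,\tau)}'Q_{(0,\tau)}^{-1}\tilde R_{(0,\tau)}+\tilde R_{(\tau,T)}'Q_{(\tau,T)}^{-1}\tilde R_{(\tau,T)},
\end{equation*}
so $\hat\tau=\arg\max_\tau V(\tau)$, and with $s=\tau/T$ it suffices to analyse $T^{-1}V(sT)$. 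First I would fix $s$ and compute its limit. On any interval contained in a single regime $k$ one has $\tilde R_{(a,b)}=Q_{(a,b)}\theta^{(k)}+\sigma R_{(a,b)}$, so writing $\bar Q:=T^{-1}Q$ and $\bar R:=T^{-1}\tilde R$ the identity $\tilde R'Q^{-1}\tilde R=T\,\bar R'\bar Q^{-1}\bar R$ reduces everything to the limits of $\bar Q$ and $\bar R$. Using Propositions~\ref{conQ1} and~\ref{conQ2} for the $Q$-blocks together with $T^{-1}R_{(0,T)}\xrightarrow[T\to\infty]{a.s.}0$, I would obtain, for $s<s^0$,
\begin{equation*}
\tfrac1T V(sT)\xrightarrow[T\to\infty]{a.s.} s\,\theta^{(1)'}\Sigma_1\theta^{(1)}+m(s)'M(s)^{-1}m(s)=:g(s),
\end{equation*}
with $M(s)=(s^0-s)\Sigma_1+(1-s^0)\Sigma_2$ and $m(s)=(s^0-s)\Sigma_1\theta^{(1)}+(1-s^0)\Sigma_2\theta^{(2)}$, a symmetric expression for $s>s^0$, and $g(s^0)=s^0\theta^{(1)'}\Sigma_1\theta^{(1)}+(1-s^0)\theta^{(2)'}\Sigma_2\theta^{(2)}$ at the true value where both segments are homogeneous.

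The identification step is to show that $g$ is \emph{strictly} maximised at $s^0$. For $s<s^0$ set $P=(s^0-s)\Sigma_1$, $S=(1-s^0)\Sigma_2$, $u=\theta^{(1)}$, $v=\theta^{(2)}$ and $w=(P+S)^{-1}(Pu+Sv)$; a direct expansion yields
\begin{equation*}
g(s^0)-g(s)=u'Pu+v'Sv-(Pu+Sv)'(P+S)^{-1}(Pu+Sv)=(u-w)'P(u-w)+(v-w)'S(v-w).
\end{equation*}
Since $u-w=(P+S)^{-1}S(u-v)$ and $v-w=-(P+S)^{-1}P(u-v)$, the right-hand side is a positive-definite quadratic form in $u-v=\theta^{(1)}-\theta^{(2)}$ whenever $\Sigma_1,\Sigma_2$ are positive definite, which holds by Proposition 2.2.6 in Zhang~(2015) under Assumptions 1--2. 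By hypothesis $\theta^{(1)}-\theta^{(2)}$ is fixed and non-zero, so $g(s^0)-g(s)>0$ for every $s\neq s^0$, and the same argument settles $s>s^0$. As $g$ is continuous and $\{s\in[0,1]:|s-s^0|\ge\delta\}$ is compact, $c_\delta:=\inf_{|s-s^0|\ge\delta}\bigl(g(s^0)-g(s)\bigr)>0$ for each $\delta>0$.

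To turn these pointwise facts into consistency I would establish the uniform convergence $\sup_{s}\bigl|T^{-1}V(sT)-g(s)\bigr|\xrightarrow[T\to\infty]{P}0$. The deterministic $Q$-parts are monotone in the integration limit with continuous limits, so a P\'olya/Dini-type argument upgrades the a.s.\ convergence of $\bar Q$ to uniform convergence in $s$; for the stochastic part, the components of $R_{(0,sT)}$ are continuous martingales in their upper limit (property 1 above), whence Doob's maximal inequality gives $\sup_s\|T^{-1}R_{(0,sT)}\|\xrightarrow[T\to\infty]{P}0$. Assumption~\ref{asm3} keeps the Gram matrices uniformly invertible for segments that are not too short, so $\bar R'\bar Q^{-1}\bar R$ is a uniformly continuous functional of the converging inputs, and restricting $s$ to a compact $[\epsilon,1-\epsilon]$ containing $s^0$ disposes of the remaining degeneracy near the endpoints. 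With uniformity in hand the conclusion is standard: on $\{|\hat s-s^0|\ge\delta\}$ one gets $T^{-1}V(\hat\tau)\le g(s^0)-c_\delta+o_p(1)$, whereas $T^{-1}V(\tau^0)=g(s^0)+o_p(1)$, contradicting $V(\hat\tau)\ge V(\tau^0)$ with probability tending to one; hence $P(|\hat s-s^0|\ge\delta)\to0$, i.e.\ $\hat s-s^0\xrightarrow[T\to\infty]{P}0$. The main obstacle is precisely this uniformity: making the convergence of $T^{-1}V(sT)$ hold simultaneously over all candidate $s$ while controlling the ill-conditioning of the short segment Gram matrices near $s=0,\,s^0,\,1$, which is exactly where Assumption~\ref{asm3} and the martingale maximal inequality do the real work.
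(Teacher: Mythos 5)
Your proposal takes a genuinely different route from the paper's proof. The paper never computes a limiting criterion function: it compares the objective only at the two points $\hat\tau$ and $\tau^0$, decomposes $\frac{1}{T}\bigl(SSE(\hat\tau)-SSE(\tau^0)\bigr)$ into a signal term and noise terms, lower-bounds the signal term by $C_1\|\theta^{(1)}-\theta^{(2)}\|^2$ by citing Lemma 2 of Bai and Perron (1998), kills the noise terms with the Markov/It\^o-isometry bounds (\ref{conv_R})--(\ref{conv_Q}) and Cauchy--Schwarz, and derives a contradiction with $SSE(\hat\tau)\le SSE(\tau^0)$. Yours is the global M-estimation argument: an explicit limit $g(s)$ of the normalized profile objective, a self-contained identification lemma, uniform convergence, then argmax consistency. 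Your identification algebra is correct (I verified that $u'Pu+v'Sv-(Pu+Sv)'(P+S)^{-1}(Pu+Sv)=(u-w)'P(u-w)+(v-w)'S(v-w)$ with $u-w=(P+S)^{-1}S(u-v)$ and $v-w=-(P+S)^{-1}P(u-v)$, so strict positivity of $g(s^0)-g(s)$ follows from positive definiteness of $\Sigma_1,\Sigma_2$), and it is a nice self-contained replacement for the paper's outsourcing of exactly this point to Bai--Perron.

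There is, however, a genuine gap in the uniformity step, precisely where you say the real work lies. First, Assumption~\ref{asm3} does not say what you invoke it for: it bounds minimum eigenvalues only of Gram matrices anchored at the true change point, $\frac{1}{L}Q_{(\tau^0,\tau^0+L)}$ and $\frac{1}{L}Q_{(\tau^0-L,\tau^0]}$, not of arbitrary segments $(0,sT)$ or $(sT,T)$; so it cannot deliver ``uniformly invertible Gram matrices for segments that are not too short'' at arbitrary locations. (On a compact $[\epsilon,1-\epsilon]$ you do not actually need it, since uniform convergence of $\frac1T Q$ to positive definite limits suffices, but then it is doing no work.) Second, and more seriously, you cannot simply restrict $s$ to $[\epsilon,1-\epsilon]$: $\hat\tau$ is by definition the minimizer over all of $[0,T]$, so you must separately show that the objective for $s\in(0,\epsilon)\cup(1-\epsilon,1)$ cannot compete with $g(s^0)$. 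Writing $\frac1T\tilde R_{(0,sT)}'Q_{(0,sT)}^{-1}\tilde R_{(0,sT)}=\frac1T\theta^{(1)\prime}Q_{(0,sT)}\theta^{(1)}+\frac{2\sigma}{T}\theta^{(1)\prime}R_{(0,sT)}+\frac{\sigma^2}{T}R_{(0,sT)}'Q_{(0,sT)}^{-1}R_{(0,sT)}$, the first two terms are uniformly controlled (PSD-monotonicity of $Q$ in the interval, and Doob for $\sup_s\|T^{-1}R_{(0,sT)}\|$), but the third, self-normalized term is not: Doob gives $\sup_{u\le T}\|R_{(0,u)}\|=O_p(\sqrt T)$ while $\|Q_{(0,sT)}^{-1}\|$ blows up like $1/(sT)$ as $s\downarrow 0$, so the product bound is useless for small $s$. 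What is needed is a maximal inequality for the self-normalized martingale $R_{(0,u)}'Q_{(0,u)}^{-1}R_{(0,u)}$ (a law-of-iterated-logarithm type bound), or alternatively an argument that a degenerate short segment forces the long segment to absorb the entire change and hence incur an $O(T)$ misfit by your own identification inequality; neither is supplied, so as written the final contradiction step does not go through for maximizers near the endpoints. (To be fair, the paper largely sidesteps this only because it compares two points, and its bound (\ref{conv_Q}) applied to the pair $(\hat\tau,T)$ quietly presumes $\hat\tau$ stays away from $T$ as well.)
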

\begin{prn}\label{prn14}
Suppose the conditions in Proposition~\ref{prn13} hold. Then, for every $\epsilon>0$, there exists a $C>0$ such that for
large $T$, $P(T|\hat{s}-s|>C)<\epsilon$.
\end{prn}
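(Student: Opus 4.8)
\section*{Proof proposal for Proposition~\ref{prn14}}

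The plan is to establish the $O_p(1)$ rate by the standard argument for change-point estimation with a fixed-size break (cf.\ Bai and Perron (1998); Perron and Qu (2006)), adapted to the present discretised OU setting. Since $\hat s = \hat\tau/T$ and $s^0 = \tau^0/T$, we have $T|\hat s - s^0| = |\hat\tau - \tau^0|$, so the claim is equivalent to $|\hat\tau - \tau^0| = O_p(1)$. By Proposition~\ref{prn13}, $\hat s \xrightarrow[T\rightarrow \infty]{P} s^0$, hence for any fixed small $\eta>0$ we have $P(|\hat\tau - \tau^0| > \eta T)\to 0$; it therefore suffices to confine the estimator to a bounded window, i.e.\ to show
\[
\lim_{C\to\infty}\ \limsup_{T\to\infty}\ P\big(C \le |\hat\tau - \tau^0| \le \eta T\big)=0.
\]

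I would work on the event that $\hat\tau$ lies in the annulus $A_{C,\eta}=\{\tau : C\le|\tau-\tau^0|\le\eta T\}$ and exploit the minimising property (\ref{cp1}): on $\{\hat\tau=\tau\}$ one has $SSE(\tau)\le SSE(\tau^0)$, i.e.\ $V_T(\tau):=SSE(\tau)-SSE(\tau^0)\le 0$. By symmetry it is enough to treat $\tau>\tau^0$ (the case $\tau<\tau^0$ being analogous, and handled by the companion eigenvalue bound in Assumption~\ref{asm3}), writing $\tau=\tau^0+k$. I would decompose $V_T(\tau)$ into a deterministic \emph{signal} part and a \emph{stochastic} part. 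The signal part arises because the observations on $(\tau^0,\tau]$ are generated by $\theta^{(2)}$ yet pooled into the first-segment fit; using the block structure of the normal equations together with the minimum-eigenvalue bound of Assumption~\ref{asm3} for $\frac1L\sum_{t_i\in(\tau^0,\tau^0+L]}Z_i'Z_i$, this part is bounded below by $c\,k\,\|\theta^{(1)}-\theta^{(2)}\|^2$ for some $c>0$, and is \emph{strictly} positive precisely because the jump $\theta^{(1)}-\theta^{(2)}$ has fixed non-zero magnitude (the standing hypothesis of Proposition~\ref{prn13}).

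The stochastic part consists of cross-terms between the signal $\delta=\theta^{(1)}-\theta^{(2)}$ and the noise, together with quadratic noise terms, each built from the martingale increments carried by $R_{(\tau^0,\tau)}$. The crucial step is to control these uniformly over $k$. Pointwise, the dominant stochastic contribution over the window $(\tau^0,\tau^0+k]$ is of order $O_p(\sqrt{k})$, so after dividing by $k$ it is negligible for large $k$; to make this uniform I would invoke a Hájek--Rényi-type maximal inequality for the martingale $R$, whose martingale property (recorded above for $R_{(0,T)}$) and second-moment control follow from the uniform $L^2$-bound (\ref{sol2}). This yields, for every $\epsilon,\gamma>0$, a $C_0$ with $P\big(\sup_{k\ge C_0}|\text{stochastic part}|/k>\gamma\big)<\epsilon$ for all large $T$.

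Combining the two estimates, on $A_{C,\eta}$ with $C\ge C_0$ one obtains $V_T(\tau)\ge k\big(c\|\delta\|^2-\gamma\big)$; choosing $\gamma<c\|\delta\|^2$ forces $V_T(\tau)>0$ uniformly on $A_{C,\eta}$ with probability exceeding $1-\epsilon$, contradicting $V_T(\hat\tau)\le 0$ whenever $\hat\tau\in A_{C,\eta}$. Hence $P(\hat\tau\in A_{C,\eta})<\epsilon$ for all large $T$, which is the desired conclusion. The main obstacle is exactly this uniform control of the stochastic cross- and quadratic terms over the growing index set $A_{C,\eta}$: the pointwise $O_p$ bounds from Subsection~3.1 are not enough and must be upgraded to a maximal inequality, which is where the martingale structure of $R$ and the $L^2$-boundedness of $X_t$ are essential. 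Additional care is needed because the regressors $Z_i$ contain the random, non-stationary state $X_{t_i}$, so the signal/noise separation is less clean than in a fixed-design regression and the eigenvalue hypothesis of Assumption~\ref{asm3} is doing real work.
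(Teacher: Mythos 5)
Your proposal is correct and, at the level of strategy, coincides with the paper's proof: localize $\hat{\tau}$ to $\{\tau:|\tau-\tau^0|\leq \eta T\}$ via Proposition~\ref{prn13}, restrict to the annulus $V_{\eta}(C)=\{\tau: C<|\tau-\tau^0|<\eta T\}$, use the minimizing property in (\ref{cp1}) to get $SSE(\hat{\tau})-SSE(\tau^0)\leq 0$, decompose this difference into a middle-segment signal term plus cross and quadratic noise terms, bound the signal term below by a constant times $(\hat{\tau}-\tau^0)\,\|\theta^{(1)}-\theta^{(2)}\|^2$ through the minimum-eigenvalue condition of Assumption~\ref{asm3} (valid once $C>L_0$), and force a contradiction once the noise terms are shown to be of smaller order. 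Where you genuinely differ is in the single delicate step: uniform control of the stochastic terms over the annulus. You upgrade the pointwise $O_p(\sqrt{k})$ bounds, $k=|\tau-\tau^0|$, to a H\'ajek--R\'enyi maximal inequality for the martingale $R$, the classical device of Bai and Perron (1998), with the second-moment input supplied by (\ref{sol2}); this makes the uniformity in $k$ airtight. The paper instead sets $\phi=1/(\hat{\tau}-\tau^0)$, evaluates its Markov/It\^{o}-isometry bounds at the random point $\hat{\tau}$ itself, recording them as $O_p\bigl((\hat{\tau}-\tau^0)^{a^*}\bigr)$ with $0<a^*<1/2$ (see (\ref{conv_R})), and relies on the recursive-inverse expansion $Q_{(0,\hat{\tau})}^{-1}=Q_{(0,\tau^0)}^{-1}+O_p\bigl((\hat{\tau}-\tau^0)/T^2\bigr)$ of Tobing and McGilchrist (1992) (equation (\ref{tm-1})) to compare the fitted quantities at $\hat{\tau}$ with those at $\tau^0$. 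Each route buys something: yours buys rigor, since applying Markov's inequality with a random threshold, as the paper does, leaves the uniformity over the growing index set implicit, and the maximal inequality is precisely what closes that gap; the paper's buys explicitness in the quadratic noise terms, since the perturbation identity (\ref{tm-1}) is what allows a term-by-term comparison of expressions of the form $R'Q^{-1}(\cdot)Q^{-1}R$ at $\hat{\tau}$ versus $\tau^0$ --- a computation your sketch leaves inside ``the block structure of the normal equations'' and which you would still need to carry out when fleshing out the quadratic part of your stochastic remainder.
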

\ \\
\noindent The proofs of Propositions \ref{prn13} and \ref{prn14}
are provided in \ref{AppendixA}. Proposition~\ref{prn13} shows that the
estimated rate $\hat{s}$ is consistent for $s^0$ and Proposition~\ref{prn14}
shows that the rate of convergence is $T$. From these two propositions, we conclude
that the proposed estimator satisfies the consistency assumption required in Zhang~(2015). Hence, it follows from Corollary 2.3.1 in Zhang~(2015) that
\begin{equation}\label{normal1}\sqrt{T}(\hat{\theta}(\hat{\tau})-\theta^0)\xrightarrow[T\rightarrow \infty]{D} \mathcal{N}_{2p+2}(0, \sigma^2\tilde{\Sigma}^{-1}),\end{equation}
where $\tilde{ \Sigma}^{-1}=\mathrm{diag}\left( \frac{1}{s^0}{\Sigma}_1^{-1},\frac{1}{1-s^0}{ \Sigma}_2^{-1}\right)$ and $\hat{\tau}$ are obtained from (\ref{cp1}).\\
\begin{rem}\label{remark1} Note that in this paper, we focus on the case where the shift in drift parameters $\theta^{(1)}$ and $\theta^{(2)}$ indicated in (\ref{ou1}) is independent of time $T$. However, in reality we may encounter the case where the shift is time-dependent, and in particular, as $T$ tends to infinity, the shift may shrink towards 0 at rate $v_T$, i.e., $\theta^{(1)}-\theta^{(2)}=\mathbf{M}v_T$, where $\mathbf{M}$ is independent of $T$ and $v_T\xrightarrow[T\rightarrow \infty]{}0$. In this case, the validity of Proposition~\ref{prn13} and Proposition~\ref{prn14} depends on the speed $v_T$. In fact, using similar arguments as in the proofs of these two propositions (see \ref{AppendixA}), one may show that if $v_T\xrightarrow[T\rightarrow \infty]{}0$ and $T^{1/2-r^*}v_T\xrightarrow[T\rightarrow \infty]{}\infty$ for some $0<r^*<1/2$, then under Assumptions 1--\ref{asm3}, we have (i) $\hat{s}-s^0\xrightarrow[T\rightarrow \infty]{P}0$ and (ii), for every $\epsilon>0$, there exists a $C>0$ such that for
large $T$, $P(Tv_T^2|\hat{s}-s|>C)<\epsilon$. (See Remark~\ref{remark2} in \ref{AppendixA}).
\end{rem}

\subsection{Maximum log-likelihood method}
\noindent We introduce the maximum log-likelihood (MLL) method pertinent to the
study of the consistency of the proposed estimator.
By Theorem 7.6 in Lipster and Shiryayev~(2001), the log-likelihood
function of (\ref{ou1}) (see also Dehling, et al.,~(2010) and Zhang,~(2015))
is
$$\log\mathcal{L}^*((0, T),\theta)=\frac{1}{\sigma^2}\int_{0}^TS(\theta,t,X_t)dX_t-\frac{1}
{2\sigma^2}\int_{0}^TS^2(\theta,t,X_t)dt.$$

\noindent When a change point $\tau$ exists, the log-likelihood function is
\begin{equation}\label{llf1}\log\mathcal{L}(\tau,\theta)=\log\mathcal{L}^*((0,\tau),\theta^{(1)})
+\log\mathcal{L}^*((\tau, T),\theta^{(2)}). \end{equation}
Based on this log likelihood function, an alternative method to estimate the unknown
change point is using the maximum of the log likelihood function. That is,
\begin{equation}\label{mle1}\hat{\tau}=\arg\max_{{\tau}} \log \mathcal{L}({\tau},\hat{\theta}(\tau))
\end{equation}
or equivalently
\begin{equation}\label{mle2}\hat{\tau}=\arg\min_{{\tau}} -2 \log \mathcal{L}({\tau},\hat{\theta}(\tau)),
\end{equation}
where $\hat{\theta}$ is the MLE of $\theta$ based on $\hat{\tau}$. \\
\ \\
\noindent {\bf Consistency of the proposed estimator} \\
\noindent To investigate the consistency behavior of the proposed estimator $\hat{\tau}$ obtained from (\ref{mle1}), we update Assumption \ref{asm3} to the following version:
\begin{asm}\label{asm4}Suppose that there exists an
$L_0^*>0$ such that under Assumption 1--3, for all $L>L_0^*$ the minimum eigenvalues of the following two symmetric matrices
$\frac{1}{2L}[Q_{(0,\tau^0)}Q_{(0,\tau^0+L)}^{-1}Q_{(\tau^0,\tau^0+L)}+Q_{(\tau^0,\tau^0+L)}$ $\times Q_{(0,\tau^0+L)}^{-1}Q_{(0,\tau^0)}]$ and $\frac{1}{2L}[Q_{(\tau^0,T)}Q_{(\tau^0-L,T)}^{-1}Q_{(\tau^0-L,\tau^0)}+Q_{(\tau^0-L,\tau^0)}Q_{(\tau^0-L,T)}^{-1} Q_{(\tau^0,T)}]$
are all bounded away from 0.\end{asm}
\noindent Then, similar to the LSSE method, for the estimator $\hat{\tau}$ given by (\ref{mle1}), we also provide two propositions below regarding the consistency of $\hat{s}$.
\begin{prn}\label{prn-mle1}
Suppose that $\theta^{(1)}-\theta^{(2)}$, the shift in the drift parameters, is of fixed non-zero magnitude independent of $T$. Then, under Assumptions 1--2 and Assumption \ref{asm4}, $\hat{s}-{s^0}\xrightarrow[T\rightarrow\infty]{P}0$.
\end{prn}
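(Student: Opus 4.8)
The plan is to profile out the drift parameters and reduce the maximum log-likelihood problem to the maximization of a scalar criterion in $\tau$, after which I would mimic the contradiction argument used for Proposition~\ref{prn13}. Substituting the segment-wise MLEs $\hat{\theta}^{(1)}=Q_{(0,\tau)}^{-1}\tilde{R}_{(0,\tau)}$ and $\hat{\theta}^{(2)}=Q_{(\tau,T)}^{-1}\tilde{R}_{(\tau,T)}$ into (\ref{llf1}) collapses each segment's profile log-likelihood to a quadratic form, so that
$$\log\mathcal{L}(\tau,\hat{\theta}(\tau))=\frac{1}{2\sigma^2}\,V(\tau),\qquad V(\tau):=\tilde{R}_{(0,\tau)}'Q_{(0,\tau)}^{-1}\tilde{R}_{(0,\tau)}+\tilde{R}_{(\tau,T)}'Q_{(\tau,T)}^{-1}\tilde{R}_{(\tau,T)}.$$
Since $\hat{\tau}$ from (\ref{mle1}) maximizes $V$, we have $V(\hat{\tau})\ge V(\tau^0)$. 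Consistency of $\hat{s}=\hat{\tau}/T$ then follows if, for every $\delta>0$, the event $\{\sup_{|s-s^0|>\delta}[V(sT)-V(\tau^0)]\ge 0\}$ has probability tending to $0$; that is, it suffices to bound $V(\tau^0)-V(\tau)$ below by a strictly positive quantity, uniformly over candidate change points with $|s-s^0|>\delta$.

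To produce that lower bound I would isolate the misfit caused by fitting a single regime to a segment straddling $\tau^0$. Take $\tau>\tau^0$ (the case $\tau<\tau^0$ is symmetric, with the roles of the two segments interchanged): here $(\tau,T)$ lies entirely in the second regime and is correctly specified, while $(0,\tau)$ contains both regimes. Writing $A=Q_{(0,\tau^0)}$ and $B=Q_{(\tau^0,\tau)}$, so that $Q_{(0,\tau)}=A+B$ and $Q_{(\tau^0,T)}=B+Q_{(\tau,T)}$, the model (\ref{ou1}) gives $\tilde{R}_{(0,\tau)}=A\theta^{(1)}+B\theta^{(2)}+\sigma R_{(0,\tau)}$. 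Discarding for the moment the martingale term $\sigma R_{(0,\tau)}$ and expanding, the deterministic part of $V(\tau^0)-V(\tau)$ reduces, after cancellation of the $\theta^{(2)}$-contributions from the correctly specified portion, to the deficit
$$D=\theta^{(1)'}A\theta^{(1)}+\theta^{(2)'}B\theta^{(2)}-(A\theta^{(1)}+B\theta^{(2)})'(A+B)^{-1}(A\theta^{(1)}+B\theta^{(2)})=\delta'M\delta,$$
where $\delta=\theta^{(1)}-\theta^{(2)}$ and $M=\tfrac{1}{2}\bigl[A(A+B)^{-1}B+B(A+B)^{-1}A\bigr]$. With $\tau=\tau^0+L$, this matrix $M$ is exactly $L$ times the first symmetric matrix appearing in Assumption~\ref{asm4}; hence, for $L=\tau-\tau^0>L_0^*$, its minimum eigenvalue is bounded below by $cL$ for some $c>0$, giving $D\ge c\,(\tau-\tau^0)\,\|\theta^{(1)}-\theta^{(2)}\|^2$. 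Because the shift is of fixed non-zero magnitude, for $|s-s^0|>\delta$ we have $\tau-\tau^0>\delta T$ and therefore $D\ge c\delta\|\theta^{(1)}-\theta^{(2)}\|^2\,T$, a deterministic deficit of exact order $T$.

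It then remains to show that the stochastic remainder—the cross terms and quadratic forms involving $\sigma R_{(0,\tau)}$, $\sigma R_{(\tau,T)}$ and $\sigma R_{(\tau^0,T)}$—is of smaller order than $T$ uniformly in $\tau$, so that it cannot offset the $O(T)$ deficit. I would control these using the established behaviour of the martingale part, $\tfrac1T R_{(a,b)}\xrightarrow{a.s.}0$ and $\tfrac{1}{\sqrt T}R_{(a,b)}=O_p(1)$, together with the almost-sure limits of $\tfrac1T Q$ and $TQ^{-1}$ from Propositions~\ref{conQ1} and \ref{conQ2}, which keep the sandwiched inverses $Q_{(\cdot,\cdot)}^{-1}$ of order $1/T$. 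Pointwise each remainder term is $O_p(\sqrt T)=o_p(T)$; a maximal inequality (or a union bound over the finitely many discretization points $t_i$, as in the proof of Proposition~\ref{prn13} in \ref{AppendixA}) upgrades this to a uniform $o_p(T)$ bound over $\{|s-s^0|>\delta\}$. Combining, $\sup_{|s-s^0|>\delta}[V(sT)-V(\tau^0)]\le -c\delta\|\theta^{(1)}-\theta^{(2)}\|^2\,T+o_p(T)<0$ with probability tending to $1$, which contradicts $V(\hat{\tau})\ge V(\tau^0)$ unless $|\hat{s}-s^0|\le\delta$, yielding $\hat{s}-s^0\xrightarrow[T\rightarrow \infty]{P}0$. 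The main obstacle is precisely this uniform control of the stochastic remainder; the algebraic reduction to $\delta'M\delta$ and the identification of $M$ with the matrix of Assumption~\ref{asm4} is exactly what makes that assumption the natural MLL counterpart of Assumption~\ref{asm3}.
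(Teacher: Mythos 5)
Your proposal is correct and takes essentially the same route as the paper's own proof: profiling out the drift parameters and comparing $V(\hat{\tau})$ with $V(\tau^0)$ is exactly the paper's comparison of $\log\mathcal{L}_1$ with $\log\mathcal{L}_0$; your deficit $\delta'M\delta$ (note that $A(A+B)^{-1}B=B-B(A+B)^{-1}B$ is symmetric, so $M=Q_{(0,\tau^0)}Q_{(0,\tau)}^{-1}Q_{(\tau^0,\tau)}$) is precisely the paper's dominant term (\ref{prn2-eq5}), lower-bounded via Assumption~\ref{asm4} exactly as in the paper; and your control of the stochastic remainder through the $R$- and $Q$-asymptotics plus Cauchy--Schwarz mirrors the paper's treatment of (\ref{prn2-eq2})--(\ref{prn2-eq4}). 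The only presentational difference is that you phrase the contradiction as a uniform bound over $\{|s-s^0|>\delta\}$ (flagging the needed maximal inequality), whereas the paper evaluates the same decomposition at the random point $\hat{\tau}$ on the event $\hat{\tau}-\tau^0>\eta T>L_0^*$.
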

\noindent The next proposition gives the rate of convergence, $T$, for $\hat{\tau}$.
\begin{prn}\label{prn-mle2}
Under the same conditions of Proposition~\ref{prn-mle1}, we have that for every $\epsilon>0$, there exists a $C>0$ such that for large
$T$, $P(T|\hat{s}-s|>C)<\epsilon$.
\end{prn}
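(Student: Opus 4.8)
My plan is to recast $\hat\tau$ as the maximiser of the \emph{concentrated} log-likelihood and then run the same signal-versus-noise argument that underlies Proposition~\ref{prn14}, but now in the likelihood metric in which Assumption~\ref{asm4} is phrased. Substituting the profile estimators $\hat\theta^{(1)}(\tau)=Q_{(0,\tau)}^{-1}\tilde R_{(0,\tau)}$ and $\hat\theta^{(2)}(\tau)=Q_{(\tau,T)}^{-1}\tilde R_{(\tau,T)}$ into (\ref{llf1}) collapses the objective to the quadratic form
\[
\log\mathcal{L}(\tau,\hat\theta(\tau))=\frac{1}{2\sigma^2}\Big[\tilde R_{(0,\tau)}'Q_{(0,\tau)}^{-1}\tilde R_{(0,\tau)}+\tilde R_{(\tau,T)}'Q_{(\tau,T)}^{-1}\tilde R_{(\tau,T)}\Big].
\]
Since this differs only by a $\tau$-free constant (the total quadratic variation $\sum_{t_i\in[0,T]}Y_i'Y_i$) from $-\tfrac{1}{2\sigma^2}$ times the continuous-time residual sum of squares, the MLL and LSSE objectives are equivalent, and one route to the result is to invoke Proposition~\ref{prn14} after checking that Assumption~\ref{asm4} is the likelihood-metric rendering of Assumption~\ref{asm3}. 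I would nevertheless carry out the estimate directly. By Proposition~\ref{prn-mle1} the rate $\hat s$ is consistent, so it suffices to argue on the shrinking event $\{|\hat\tau-\tau^0|\le\varepsilon T\}$, where the normalised matrices $\tfrac1T Q_{(\cdot,\cdot)}$ obey the almost-sure limits of Propositions~\ref{conQ1}--\ref{conQ2}. Writing $G(\tau):=\log\mathcal{L}(\tau,\hat\theta(\tau))-\log\mathcal{L}(\tau^0,\hat\theta(\tau^0))$, optimality forces $G(\hat\tau)\ge 0$; the target is to show $G(\tau)<0$ with high probability whenever $|\tau-\tau^0|>C$ for $C$ large.

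Treating $\tau=\tau^0+L$, $L>0$ (the case $\tau<\tau^0$ being symmetric via the second matrix in Assumption~\ref{asm4}), I would split $\tilde R$ on each subinterval into drift and martingale parts, $\tilde R_{(a,b)}=Q_{(a,b)}\theta^{(\cdot)}+\sigma R_{(a,b)}$, with $R_{(a,b)}$ the martingale term of Subsection~3.1. Setting $A=Q_{(0,\tau^0)}$, $B=Q_{(\tau^0,\tau^0+L)}$ and $\delta=\theta^{(1)}-\theta^{(2)}$, the purely deterministic part of $G$ reduces, after the interval-additivity cancellations in the two segments, to
\[
\bar G(\tau^0+L)=-\frac{1}{2\sigma^2}\,\delta'\,A(A+B)^{-1}B\,\delta.
\]
Here $A(A+B)^{-1}B=A-A(A+B)^{-1}A$ is the parallel sum of $A$ and $B$, which is symmetric and positive semi-definite and whose normalisation $\tfrac1L A(A+B)^{-1}B$ is exactly the symmetric matrix in Assumption~\ref{asm4}. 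Thus Assumption~\ref{asm4} gives $\bar G(\tau^0+L)\le-\tfrac{c}{2\sigma^2}L\|\delta\|^2$ for some $c>0$, a loss that is linear in $L$ because the shift $\delta$ is fixed and non-zero.

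It remains to dominate the stochastic remainder $G-\bar G$, which gathers the terms linear in the martingale increments together with the purely quadratic noise $\sigma^2R'Q^{-1}R$. The quadratic noise differences are $O_p(1)$ (each piece is of the form $(\tfrac1{\sqrt T}R)'(\tfrac1TQ)^{-1}(\tfrac1{\sqrt T}R)$ and converges by Propositions~\ref{conQ1}--\ref{conQ2}), hence $o_p(L)$ uniformly for $L>C$. The linear part reduces to $2\sigma\,\delta'\big[A(A+B)^{-1}R_{(\tau^0,\tau^0+L)}-B(A+B)^{-1}R_{(0,\tau^0)}\big]$; the second summand is $o_p(L)$ uniformly because $\|B(A+B)^{-1}\|=O(L/T)$ (again from the $\tfrac1TQ$ limits) while $R_{(0,\tau^0)}=O_p(\sqrt T)$. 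The genuinely delicate term—and the main obstacle—is $\delta'A(A+B)^{-1}R_{(\tau^0,\tau^0+L)}$: pointwise it is only $O_p(\sqrt L)$, so to beat a random index $\hat\tau$ I need a \emph{uniform} bound. I would obtain it from a H\'ajek--R\'enyi/Doob maximal inequality applied to the martingale $L\mapsto R_{(\tau^0,\tau^0+L)}$ (handling the slowly varying, operator-norm-bounded factor $A(A+B)^{-1}$ by summation by parts or a dyadic decomposition over blocks of $L$), yielding, for every $\rho>0$,
\[
P\Big(\sup_{L>C}L^{-1}\big|G(\tau^0+L)-\bar G(\tau^0+L)\big|>\rho\Big)\xrightarrow[C\to\infty]{}0.
\]

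Combining the two pieces, on $\{|\hat\tau-\tau^0|>C\}$ one has $0\le G(\hat\tau)\le-\tfrac{c}{2\sigma^2}L\|\delta\|^2+\rho L$, which is negative once $\rho<c\|\delta\|^2/(2\sigma^2)$ and $C$ is large; this contradiction forces $P(|\hat\tau-\tau^0|>C)=P(T|\hat s-s^0|>C)$ below any prescribed $\epsilon$ for $C$ large, which is the assertion. The two steps I expect to be most demanding are the verification that the signal matrix is genuinely the positive-semi-definite parallel sum (so that Assumption~\ref{asm4} delivers a linear-in-$L$ separation) and the maximal inequality that controls the martingale cross term uniformly over the random location $\hat\tau$.
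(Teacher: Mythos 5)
Your proposal is correct and its skeleton coincides with the paper's own proof: localise to $V_\eta=\{\tau:|\tau-\tau^0|\le\eta T\}$ via Proposition~\ref{prn-mle1}, restrict attention to $V_\eta(C)=\{\tau: C<|\tau-\tau^0|<\eta T\}$, decompose the profile-likelihood difference into signal plus noise, and derive a contradiction with the optimality of $\hat\tau$. In particular, writing $A=Q_{(0,\tau^0)}$, $B=Q_{(\tau^0,\hat\tau)}$ and $\delta=\theta^{(1)}-\theta^{(2)}$, your deterministic term $-\frac{1}{2\sigma^2}\,\delta'A(A+B)^{-1}B\,\delta$ is exactly the paper's term (\ref{prn2-eq5}), since $Q_{(0,\tau^0)}Q_{(0,\hat\tau)}^{-1}Q_{(\tau^0,\hat\tau)}=A(A+B)^{-1}B$, and your reading of Assumption~\ref{asm4} as delivering a loss linear in $L=|\hat\tau-\tau^0|$ is precisely how the paper uses it (its inequality (\ref{prn2-eq6})); likewise your linear martingale part $\delta'\bigl[A(A+B)^{-1}R_{(\tau^0,\hat\tau)}-B(A+B)^{-1}R_{(0,\tau^0)}\bigr]$ is the paper's term (\ref{prn2-eq2}) after splitting $R_{(0,\hat\tau)}=R_{(0,\tau^0)}+R_{(\tau^0,\hat\tau)}$. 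Where you genuinely diverge is in the technique for the stochastic terms: the paper disposes of the delicate cross term through the pointwise bound (\ref{conv_R}), with the threshold $K^*=(\hat\tau-\tau^0)^{a^*}$ chosen as a function of the random index itself, and treats the quadratic noise differences (\ref{prn2-eq3})--(\ref{prn2-eq4}) via the Tobing--McGilchrist expansions (\ref{tm-1})--(\ref{tm-2}); you instead invoke a H\'ajek--R\'enyi/Doob maximal inequality to obtain a bound uniform over $L>C$ before letting $C$ grow. Your route costs an extra dyadic-blocking argument that the paper avoids, but it is the more rigorous way to justify an estimate evaluated at the data-dependent location $\hat\tau$ (it is the same device Bai and Perron use in the regression setting), so on this step your plan strengthens rather than weakens the paper's argument. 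Finally, your opening observation---that the profile likelihood differs from the scaled, negated residual sum of squares by a $\tau$-free constant, so one could alternatively transfer Proposition~\ref{prn14} once Assumption~\ref{asm4} is matched against Assumption~\ref{asm3}---is consistent with the paper, which records exactly this equivalence in Remark~\ref{remark3} but, like you, proves the MLL case directly.
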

\noindent The proofs of Propositions \ref{prn-mle1} and \ref{prn-mle2}
are provided in \ref{AppendixA}. Proposition~\ref{prn-mle1} establishes that the
estimated rate $\hat{s}$ is
consistent for $s^0$ and Proposition~\ref{prn-mle2} shows that the rate of
convergence is $T$. Moreover, in case the shift in the drift parameters is of shrinking magnitude, the discussions in Remark~\ref{remark1} also hold for this case. Propositions \ref{prn-mle1} and \ref{prn-mle2} imply that the proposed estimator satisfies the consistency assumption required in Zhang~(2015). Thus, the asymptotic normality in (\ref{normal1}) also holds when $\hat{\tau}$ are obtained from (\ref{mle1}).

\begin{rem}\label{remark3} To see the connection between equations (\ref{mle1}) and (\ref{cp1}), one may apply the Riemann sum approximation with increment $\Delta_t$ to approximate the integrals inside the log-likelihood function $\log\mathcal{L}(\tau,\hat{\theta})$ specified in (\ref{llf1}). The result of the approximation is of the form $\frac{1}{\sigma^2}\sum_{t_i\in[0,T]}\hat{\theta}(\tau)'V(t)'(X_{t_{i+1}}-X_{t_i})-\frac{1}{2\sigma^2}\sum_{t_i\in[0,T]}(\hat{\theta}(\tau)'V(t)')^2\Delta_t$. Furthermore, if the increment $\Delta_t$ is same as that in the LSSE method,
we have $X_{t_{i+1}}-X_{t_i}=Y_i$ and $V(t)=Z_{i}/\Delta_t$. Then, after some algebraic computations, such approximation can be transformed to $\frac{1}{2\Delta_t\sigma^2}(\sum_{t_i\in[0,T]}Y_i'Y_i- SSE(\tau))$. Hence, for an observed process $X_{t}$, $t\in[0,T]$ with same constant $\Delta_t$ and known $\sigma$, (\ref{cp1}) and the Riemann sum approximation of (\ref{mle1}) are equivalent. This finding will also be confirmed by the simulation results highlighted in Section 6.
\end{rem}

\section{Existence of a change point}
\noindent In Section~\ref{scpe}, we introduced two estimation methods for the case where
the existence of the single change point is affirmative, that is, the number of change
points is known to be 1. In this section, we shall deal with the extended change-point
problem in which the number of change points may be either 0 or 1. In this case,
it is of interest to test the existence of the change point and to determine its exact location if it exists. \\
\ \\
\noindent One popular methodology in the change-point literature in detecting the
unknown number of change points is by treating it as a model-selection problem.
For instance, note that the existence of the change point in (\ref{ou1}) also increases
the number of drift parameters from $p+1$ to $2(p+1)$. Hence, detecting the existence
of change points is equivalent to selecting a statistical model from two candidate
models and this could be solved by using the informational approach. This approach deems
that the most appropriate model is the one that minimises the log-likelihood-based information criterion
\begin{equation}\label{ic} \mathcal{IC}(m)=-2 \log\mathcal{L}(\hat{\tau}, \hat{\theta})
+(m+1)h(p)\phi(T). \end{equation}
In \eqref{ic}, $\log\mathcal{L}(\tau, \hat{\theta})$ is defined in (\ref{llf1}); $\hat{\tau}$
is obtained via (\ref{mle1}) corresponding to each $m$, where $m$ is the potential number of change points to
be determined ($m=0$ or 1 in this case); $h(p)=p+1$ if there is no
change in the diffusion coefficient $\sigma$ before and after the change point or $p+2$ if there is a change in $\sigma$ (i.e. $\sigma=\sigma^{(1)}$ for $t\in [0,\tau^0]$ and $\sigma=\sigma^{(2)}$ for $t\in (\tau^0,T]$); and $\phi(T)$ is a non-decreasing
function of $T$.  \\
\ \\
\noindent  Note that if the number of change points is known, then the term
$(m+1)h(p)\phi(T)$ is fixed, and (\ref{ic}) is equivalent to the maximum
log likelihood method introduced in the previous section. The efficiency of the
information criterion depends on the choice of the penalty criterion $\phi(T)$.
For example, if $\phi(T)=2$,
then (\ref{ic}) reduces to the well-known Akaike information criterion (AIC)~(1973).
However, in practice, a model selected by minimising the AIC may not
be asymptotically consistent in terms of the model order; see for example, Schwarz~(1978).
Many modified versions, thus, were proposed to overcome this problem. One of the
modifications is the Schwarz information criterion (SIC) (1978)
entails the setting of $\phi(T)$ as the log transform of the sample size.
SIC has been
successfully applied to the change-point analysis in the
literature, and it gives an asymptotically consistent estimate
of the order of the true model. Hence, we only focus on SIC on this
particular theoretical development.  \\
\ \\
\noindent Further, it may be of interest to see which of the two penalty criteria we should use: $\phi(T)=\log (T)$
or $\phi(T)=\log (T/\Delta_t)$, where $\Delta_t$ is the increment defined in the
previous section. Hence, in the ensuing discussion of our examples, we take into account these two criteria. (Note that $\log T$ is just a special case of
$\log(T/\Delta_t)=\log(T)-\log(\Delta_t)$ with $\Delta_t=1$).\\
\\
\noindent Consider the hypothesis
\begin{equation}\label{hp-1}\textrm{H}_0:\quad m^0=0 \quad \textrm{versus}
\quad \textrm{H}_1:\quad m^0=1.\end{equation}
Based on (\ref{ic}), the rejection region for the null hypothesis in (\ref{hp-1}) is given by $\mathcal{IC}(m=0)\geq \mathcal{IC}(m=1)$.  Moreover, the asymptotic significant level
and power of the above test are investigated via the following results.

\begin{prn}\label{prn-ictest}
Suppose Assumptions 1--2 and \ref{asm4} hold. Then, under $H_0$ in (\ref{hp-1}),\\ $\displaystyle{\lim_{T\rightarrow \infty}P\left(\mathcal{IC}(m=0)\geq \mathcal{IC}(m=1)\right)=0}$. Moreover, under $H_1$, $\displaystyle{\lim_{T\rightarrow \infty}P\left(\mathcal{IC}(m=0)> \mathcal{IC}(m=1)\right)=1}$.
\end{prn}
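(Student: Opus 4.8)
The plan is to collapse the comparison of the two information criteria into a single likelihood-ratio quantity and then study its order of magnitude under each hypothesis. Writing both criteria from (\ref{ic}) with the SIC penalty, the $m=1$ model carries exactly one extra penalty block $h(p)\phi(T)$, so that
$$\mathcal{IC}(m=0)-\mathcal{IC}(m=1)=\Lambda_T-h(p)\phi(T),\qquad \Lambda_T:=2\bigl[\log\mathcal{L}(\hat{\tau},\hat{\theta})-\log\mathcal{L}^*((0,T),\hat{\theta})\bigr],$$
where $\Lambda_T=\sup_{\tau}\Lambda_T(\tau)$ is twice the maximized log-likelihood ratio of the one-change model against the no-change model, with $\Lambda_T(\tau)=2[\log\mathcal{L}(\tau,\hat{\theta}(\tau))-\log\mathcal{L}^*((0,T),\hat{\theta})]$. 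The rejection region $\{\mathcal{IC}(m=0)\ge\mathcal{IC}(m=1)\}$ therefore coincides with $\{\Lambda_T\ge h(p)\phi(T)\}$, and both assertions reduce to comparing the growth of $\Lambda_T$ with that of $h(p)\phi(T)$. For the SIC choice $\phi(T)=\log T$ (or $\log(T/\Delta_t)$) the only features used are $\phi(T)\to\infty$ and $\phi(T)=o(T)$.

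For the asymptotic level, work under $H_0$, where no true change point exists and the fitted change point is spurious. I would first show that at each fixed rate $s=\tau/T$ the pointwise statistic $\Lambda_T(sT)$ is an ordinary likelihood-ratio statistic for the equality of the two drift vectors; by the asymptotic normality $\tfrac{1}{\sqrt T}(\hat{\theta}-\theta)\xrightarrow{D}\mathcal{N}_{p+1}(0,\Sigma_0^{-1})$ and property~3 of $R_{(0,T)}$ recorded in Subsection 3.1, it converges in distribution to a $\chi^2_{p+1}$ variate and is thus $O_p(1)$. The remaining task is to upgrade this to the supremum: using the functional weak convergence of the score process (as in Dehling et al.~(2014)) together with the positive-definiteness of $\Sigma_1,\Sigma_2$, one shows that $\Lambda_T=\sup_s\Lambda_T(sT)$ converges in distribution to the supremum of a Gaussian-type field over a compact rate interval and is hence $O_p(1)$. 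Since $h(p)\phi(T)\to\infty$, it follows that $P(\Lambda_T\ge h(p)\phi(T))\to0$, which is the first claim.

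For the power, work under $H_1$, where a genuine change point $\tau^0=s^0T$ is present and the no-change fit is misspecified. Invoking the consistency $\hat{s}-s^0\xrightarrow{P}0$ from Proposition~\ref{prn-mle1}, the almost-sure ergodic limits (\ref{cxs1})--(\ref{cxs4}), and the matrix convergences (\ref{conQ-1})--(\ref{conQ-4}), I would show that $\tfrac1T\log\mathcal{L}(\hat{\tau},\hat{\theta})$ and $\tfrac1T\log\mathcal{L}^*((0,T),\hat{\theta})$ converge almost surely to deterministic limits $\ell_1$ and $\ell_0$, with $\ell_1>\ell_0$ strictly whenever the shift $\theta^{(1)}-\theta^{(2)}$ is non-zero; the strict gap is the per-unit-time information deficit incurred by forcing a single drift vector across the break. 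Consequently $\Lambda_T/T\xrightarrow{P}c:=2(\ell_1-\ell_0)>0$. Because $h(p)\phi(T)=o(T)$, dividing through by $T$ gives $P(\Lambda_T>h(p)\phi(T))\to1$, establishing the second claim.

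The main obstacle will be the uniform control in the null analysis: the pointwise $O_p(1)$ bound on $\Lambda_T(sT)$ must be promoted to a bound on the supremum over all candidate rates, and the endpoints $s\to0$ and $s\to1$—where the normalized information matrices $\tfrac1T Q_{(0,sT)}$ and $\tfrac1T Q_{(sT,T)}$ degenerate—need separate treatment. I would handle this either by trimming the search to a fixed interval $[\epsilon,1-\epsilon]$, so that the weak-convergence and tightness arguments apply directly, or by an iterated-logarithm bound guaranteeing that the contribution of the extreme rates is still $o_p(\phi(T))$. This is exactly the step where the positive-definiteness results of Zhang~(2015) for $\Sigma_1,\Sigma_2$ and the functional limit theory of Dehling et al.~(2014) are indispensable.
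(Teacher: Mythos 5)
Your reduction of the comparison to the likelihood-ratio statistic $\Lambda_T$ versus the penalty $h(p)\phi(T)$ is correct and is implicitly the paper's starting point too, and your $H_1$ argument is essentially the paper's: the paper also works at scale $T$, showing that $\frac{1}{T}\bigl(\mathcal{IC}(m=1)-\mathcal{IC}(m=0)\bigr)$ equals a term bounded above by $-C_4\|\theta^{(1)}-\theta^{(2)}\|^2$ (via Assumption~\ref{asm4}) plus $o_p(1)$ terms, the latter killed using the $T$-rate of convergence from Proposition~\ref{prn-mle2}; that negative bound is your strict per-unit-time information gap in disguise, obtained by matrix algebra rather than by computing the ergodic limits $\ell_0,\ell_1$ explicitly. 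Where you genuinely diverge is under $H_0$. The paper never establishes, nor needs, any distributional limit for $\Lambda_T$: it expands $\mathcal{IC}(m=1)-\mathcal{IC}(m=0)$ explicitly into quadratic forms $R'Q^{-1}R$ over $(0,T)$, $(0,\hat\tau)$ and $(\hat\tau,T)$ (its equation (\ref{prn3-eq1})) and bounds each one by $O_p(\log^{2a^*}T)$ with $2a^*<1$, using only the Markov/It\^{o}-isometry bound (\ref{conv_R}), the matrix bounds (\ref{conv_Q}), and Cauchy--Schwarz; since the SIC penalty has exact order $\log T$, domination is immediate. This cruder bound is precisely what lets the paper sidestep the obstacle you correctly flag as the hard part of your route, namely promoting a pointwise $\chi^2_{p+1}$ limit to a supremum over all candidate rates with degenerating information matrices at the endpoints. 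Two cautions if you pursue your route: first, without trimming, the supremum of the LR field over all of $(0,1)$ is typically not $O_p(1)$ but grows at a Darling--Erd\H{o}s rate $O_p(\log\log T)$ --- still $o_p(\log T)$, so your conclusion survives, but the claim ``$\Lambda_T=O_p(1)$'' as stated requires the restriction to $[\epsilon,1-\epsilon]$; second, the functional weak convergence you would import from Dehling et al.~(2014) is substantial extra machinery, whose reward (an actual limiting distribution for the test statistic, hence validity for penalties growing more slowly than $\log T$) exceeds what the proposition asks for. Finally, both proofs share a subtlety worth noting: the quadratic forms (in the paper) and $\Lambda_T$ (in your proposal) are evaluated at the random $\hat\tau$, so a pointwise-in-$\tau$ stochastic bound is strictly speaking being applied at a data-dependent point; the paper passes over this silently, whereas your explicit attention to uniformity is, if anything, the more careful treatment.
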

\noindent The proof of Proposition \ref{prn-ictest} is presented in \ref{append-5}. \\
\ \\
\noindent Let $\hat{m}=\arg\min_{m\in\{0,1\}}\mathcal{IC}(m)$ with $\phi(T)=\log T$ or $\log(T/\Delta_t)$.  We then have the following.
\begin{cor}\label{prn-ic}
Under Assumptions 1--2 and \ref{asm4},
$\hat{m}-{m^0}\xrightarrow[T\rightarrow\infty]{P}0.$
\end{cor}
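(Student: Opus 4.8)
The plan is to reduce the claim to Proposition~\ref{prn-ictest}, exploiting that both $\hat{m}$ and the true order $m^0$ take values in the two-point set $\{0,1\}$. For an integer-valued estimator, convergence in probability to a constant is equivalent to the probability of incorrect selection tending to zero: since $|\hat{m}-m^0|$ can only equal $0$ or $1$, for every $\epsilon\in(0,1]$ the event $\{|\hat{m}-m^0|\geq\epsilon\}$ coincides with $\{\hat{m}\neq m^0\}$, while for $\epsilon>1$ it is empty. Hence it suffices to prove $P(\hat{m}\neq m^0)\to 0$ as $T\to\infty$. First I would fix the decision rule consistent with the rejection region declared after (\ref{hp-1}): the minimiser satisfies $\hat{m}=1$ precisely when $\mathcal{IC}(m=0)\geq\mathcal{IC}(m=1)$, and $\hat{m}=0$ otherwise.

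I would then split into the two cases dictated by $m^0$. Under $H_0$ (the case $m^0=0$), a wrong selection means $\hat{m}=1$, which by the decision rule is exactly the event $\{\mathcal{IC}(m=0)\geq\mathcal{IC}(m=1)\}$; the first assertion of Proposition~\ref{prn-ictest} gives $P(\mathcal{IC}(m=0)\geq\mathcal{IC}(m=1))\to 0$, so $P(\hat{m}\neq 0)\to 0$. Under $H_1$ (the case $m^0=1$), a wrong selection means $\hat{m}=0$, i.e.\ the complementary event $\{\mathcal{IC}(m=0)<\mathcal{IC}(m=1)\}$, whose probability equals $1-P(\mathcal{IC}(m=0)\geq\mathcal{IC}(m=1))$. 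Because $P(\mathcal{IC}(m=0)\geq\mathcal{IC}(m=1))\geq P(\mathcal{IC}(m=0)>\mathcal{IC}(m=1))$ and the second assertion of Proposition~\ref{prn-ictest} forces the latter to $1$, a squeeze (the probability being bounded above by $1$) yields $P(\mathcal{IC}(m=0)\geq\mathcal{IC}(m=1))\to 1$, hence $P(\hat{m}\neq 1)\to 0$.

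Combining the two cases gives $P(\hat{m}\neq m^0)\to 0$ regardless of the true value of $m^0$, which is the asserted consistency. I expect essentially no analytic difficulty here, since all the probabilistic work is carried by Proposition~\ref{prn-ictest}; the only point requiring care is the bookkeeping of strict versus non-strict inequalities, so that the boundary event $\{\mathcal{IC}(m=0)=\mathcal{IC}(m=1)\}$ is assigned to the same decision under both the rejection-region convention and the $\arg\min$ tie-breaking. Passing from the strict inequality in the power statement to the non-strict inequality needed under $H_1$ is handled by the monotonicity and squeeze argument above and introduces no additional hypotheses beyond Assumptions~1--2 and~\ref{asm4}.
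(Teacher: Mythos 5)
Your proof is correct and follows exactly the route the paper intends: the paper simply declares the corollary ``immediate from Proposition~\ref{prn-ictest}'', and your argument supplies precisely the elementary bookkeeping (equivalence of convergence in probability with $P(\hat{m}\neq m^0)\to 0$ for a $\{0,1\}$-valued estimator, plus the strict-versus-non-strict inequality squeeze under $H_1$) that makes that immediacy rigorous. No discrepancy with the paper's approach.
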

\noindent The proof of Corollary~\ref{prn-ictest} is immediate from Proposition~\ref{prn-ictest}.\\
\ \\
For a fixed $\Delta_t$ and large $T$, Corollary \ref{prn-ic}
shows that the two criteria, $\log (T)$ and $\log (T/\Delta_t)$,
lead to asymptotically consistent estimate
of the number of change points. For small $T$, we use Monte-Carlo
simulation to compare the performance of these two criteria. Simulation results indicate
that it would be more appropriate to use $\phi(T)=\log(T/\Delta_t)$ when $T$
is small as $\phi(T)=\log T$ tends to over-estimate the number of change points,
i.e., overfitting the model.

\section{Numerical demonstrations}
\noindent In this numerical work, we use in Subsection 6.1 the Monte-Carlo simulation technique to evaluate the performance
of the (i) two estimation methods proposed in (\ref{cp1}) and (\ref{mle1}) in detecting the unknown
location of a change point assumed to already exist, and (ii) method in (\ref{ic}) for testing the existence of
a change point. In Subsection 6.2, we implement the above methods on some observed financial market data as illustrate the various implementation details.
\ \\
\subsection{Monte-Carlo simulation study}
\noindent Our simulation considers two different scenarios. In the first scenario, we study
the performance of the proposed methods under a classical OU process. In the second scenario, the performance
evaluation of the proposed methods is applied to a periodic mean-reverting OU process.
Each scenario consists of 1000 iterations. In each iteration,
we first generate a desired simulated process based on the
Euler-Maruyama discretisation scheme given a period $T$ and pre-assigned ``true" parameters such as the
coefficients and rate of change point. Next, we estimate and record the rate of change
points by applying (\ref{cp1}) and (\ref{mle1}) on the simulated process as well as the number of
change points estimated by (\ref{ic}).
To investigate the performance of (\ref{ic}),
assuming there is no change point,  we re-generate a simulated process with no change
point and apply (\ref{ic}) to estimate and record again the number of change points.
After 1000 iterations, we analyse the performance of the proposed methods based on the
recorded results.\\

\subsubsection{Simulation setup}
\noindent {\bf Scenario 1: Classical OU process}\\
\noindent
Two classical OU processes are considered with stochastic dynamics
\begin{equation}\label{sim1-1}
\textit{d}X_t=
\begin{cases}
(0.08-0.1X_t)\textit{d}t+0.2 \textit{d}W_t,  & \mbox{if }0<t<0.5 T \\
(2.5-1X_t)\textit{d}t+0.2 \textit{d}W_t,  & \mbox{if }0.5 T<t< T.
\end{cases}
\end{equation}

\noindent Equation (\ref{sim1-1}) includes one change point occurring at $\tau^0=0.5 T$ ($s^0=0.5$).
This process is generated to determine the performance of the methods proposed in (\ref{cp1}) and (\ref{mle1}). \\
\ \\
\noindent When there is one change point, we study the performance of (\ref{ic}) by
considering the SDE
\begin{equation}\label{sim1-2}
\textit{d}X_t=(2.5-X_t)\textit{d}t+0.2 \textit{d}W_t, \quad 0<t<T. \\
\end{equation}

\noindent {\bf Scenario 2: Periodic-mean reverting OU process}\\
\noindent For this scenario, we consider a mean-reverting OU process,
with 2-dimensional periodic incomplete orthogonal set of functions
$\left\{1, \sqrt{2}\cos\left( \frac{\pi t}{2\Delta_t} \right)\right\}$,
given by
\begin{equation}\label{sim2-1}
\textit{d}X_t=
\begin{cases}
\left[ 0.08+0.02 \sqrt{2}\cos\left( \frac{\pi t}{2\Delta_t}\right)-0.1X_t \right]\textit{d}t+0.2 \textit{d}W_t,
& \mbox{if }0<t<0.5 T \\
\left[ 2.5+1.2\sqrt{2}\cos\left(\frac{\pi t}{2\Delta_t}\right)-1X_t \right]\textit{d}t+0.2 \textit{d}W_t,
& \mbox{if }0.5 T<t< T,
\end{cases}
\end{equation}
where $\Delta_t=t_{i+1}-t_i$ is the increment in the given time period $[0,T]$. \\
\ \\
Similarly, we study the performance of (\ref{ic}), under the assumption that there is no change point,
using the SDE
\begin{equation}\label{sim2-2}
\textit{d}X_t=\left(2.5+1.2\sqrt{2}\cos\left( \frac{\pi t}{2\Delta_t} \right)-X_t\right)\textit{d}t+0.2
\textit{d}W_t, \quad \mbox{if }0<t<T. \\
\end{equation}

\noindent We choose $T=5,~10,~20~\mbox{and}~50$, and $\Delta_t=1/252$ and the starting point $X_0=0.05$.
Simulated sample paths for the processes (\ref{sim1-1})--(\ref{sim2-2})
with different time periods ($T=5$ and $50$) are
shown in Figures~\ref{fig:sampleseries1}--\ref{fig:sampleseries4}.

\begin{figure}[htbp]
\includegraphics[height=4in,width=2.9in]{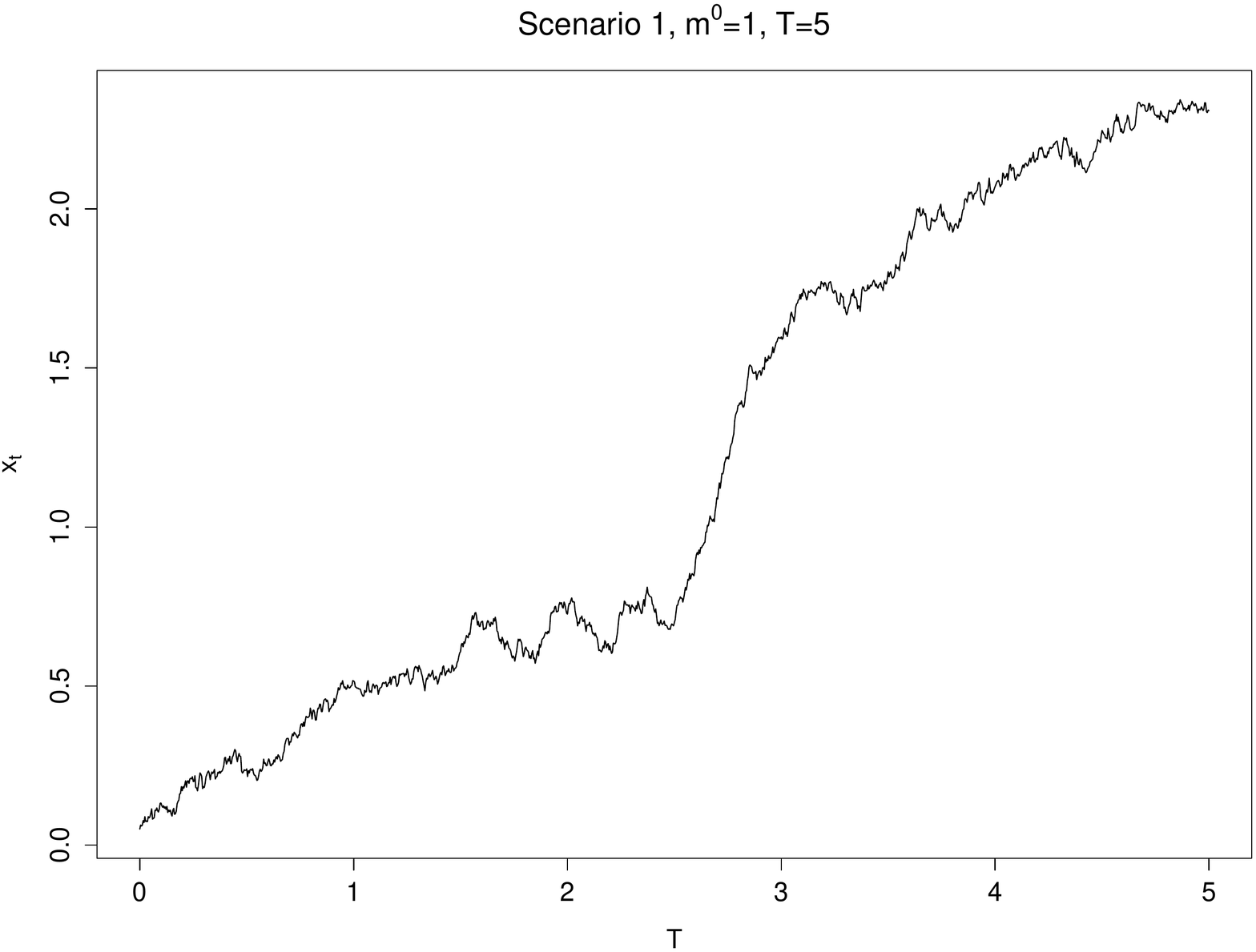}
\includegraphics[height=4in,width=2.9in]{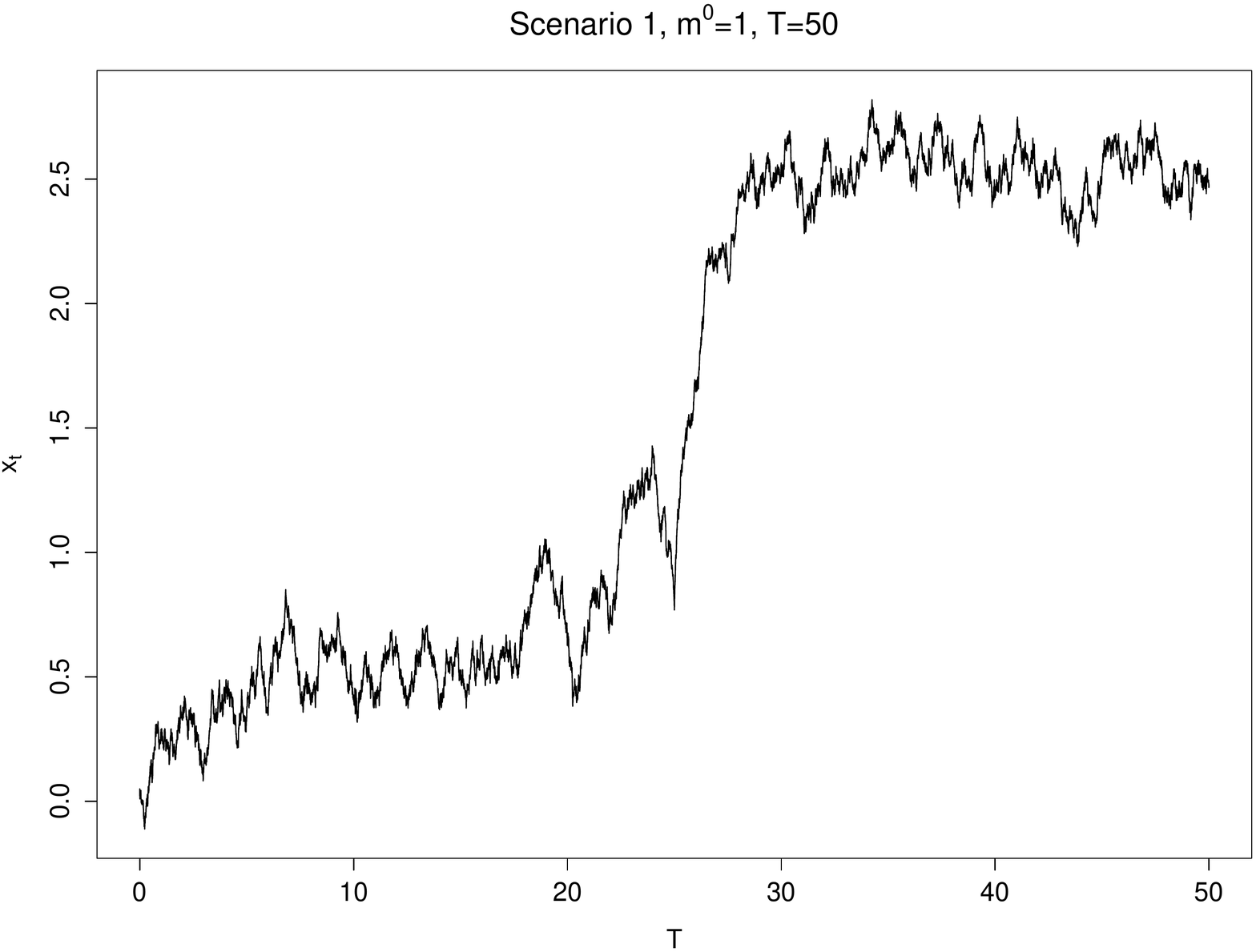}
\caption{\small Sample series for (\ref{sim1-1}) under scenario 1 with one change point}
\label{fig:sampleseries1}
\end{figure}
\begin{figure}[htbp]\label{sampleseries2}
\includegraphics[height=4in,width=2.9in]{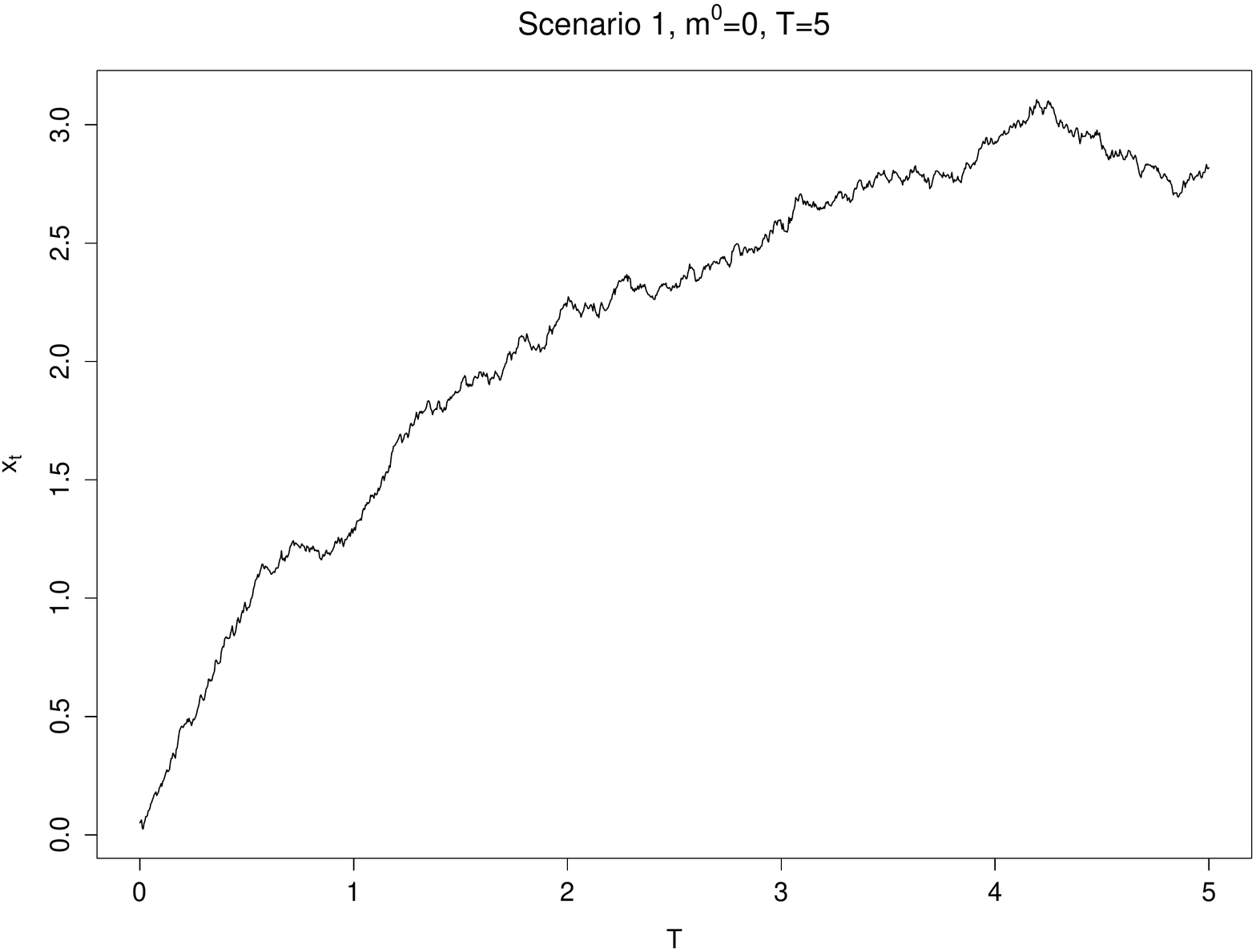}
\includegraphics[height=4in,width=2.9in]{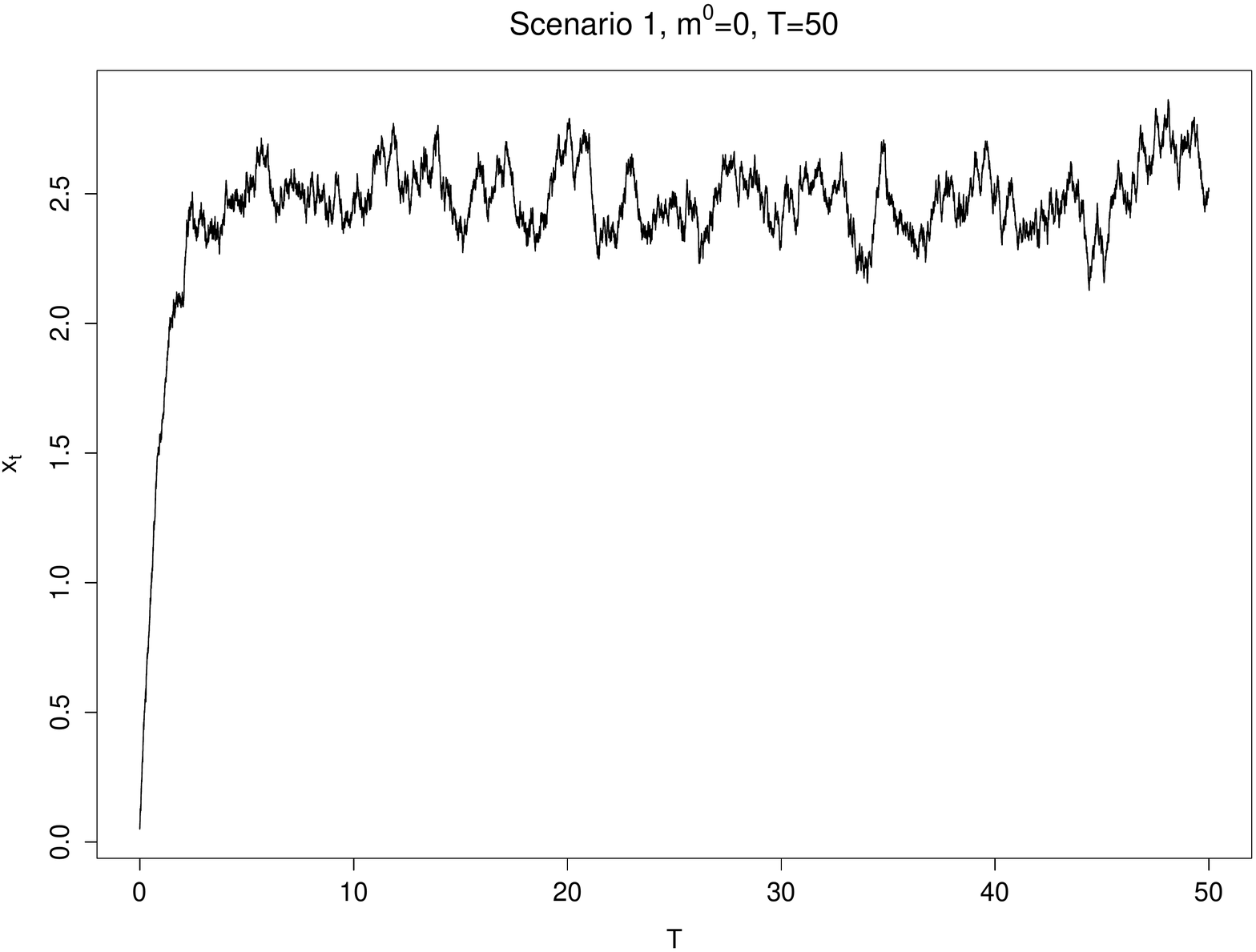}
\caption{\small Sample series for (\ref{sim1-2}) under scenario 1 without a change point}
\end{figure}
\begin{figure}[htbp]\label{sampleseries3}
\includegraphics[height=4in,width=2.9in]{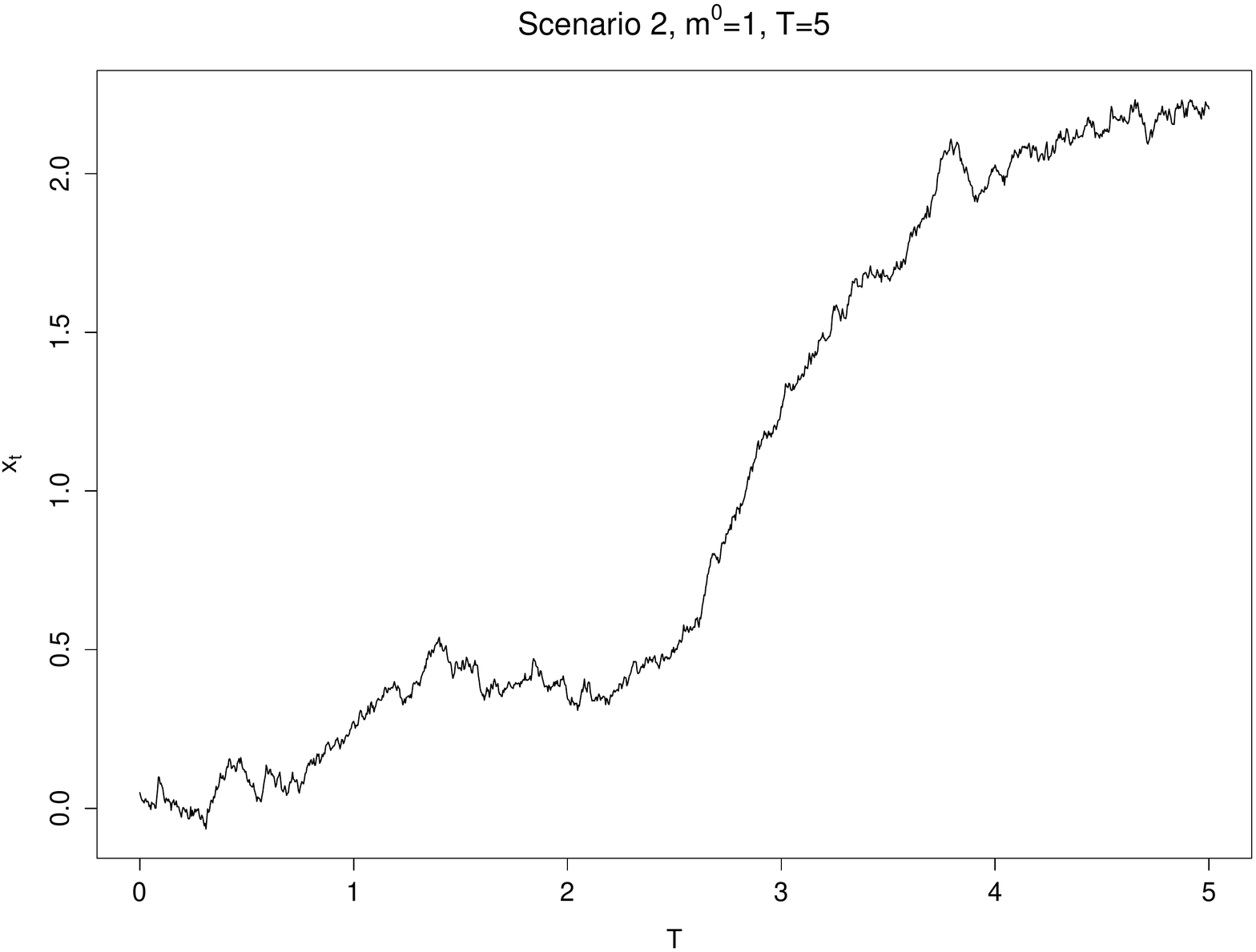}
\includegraphics[height=4in,width=2.9in]{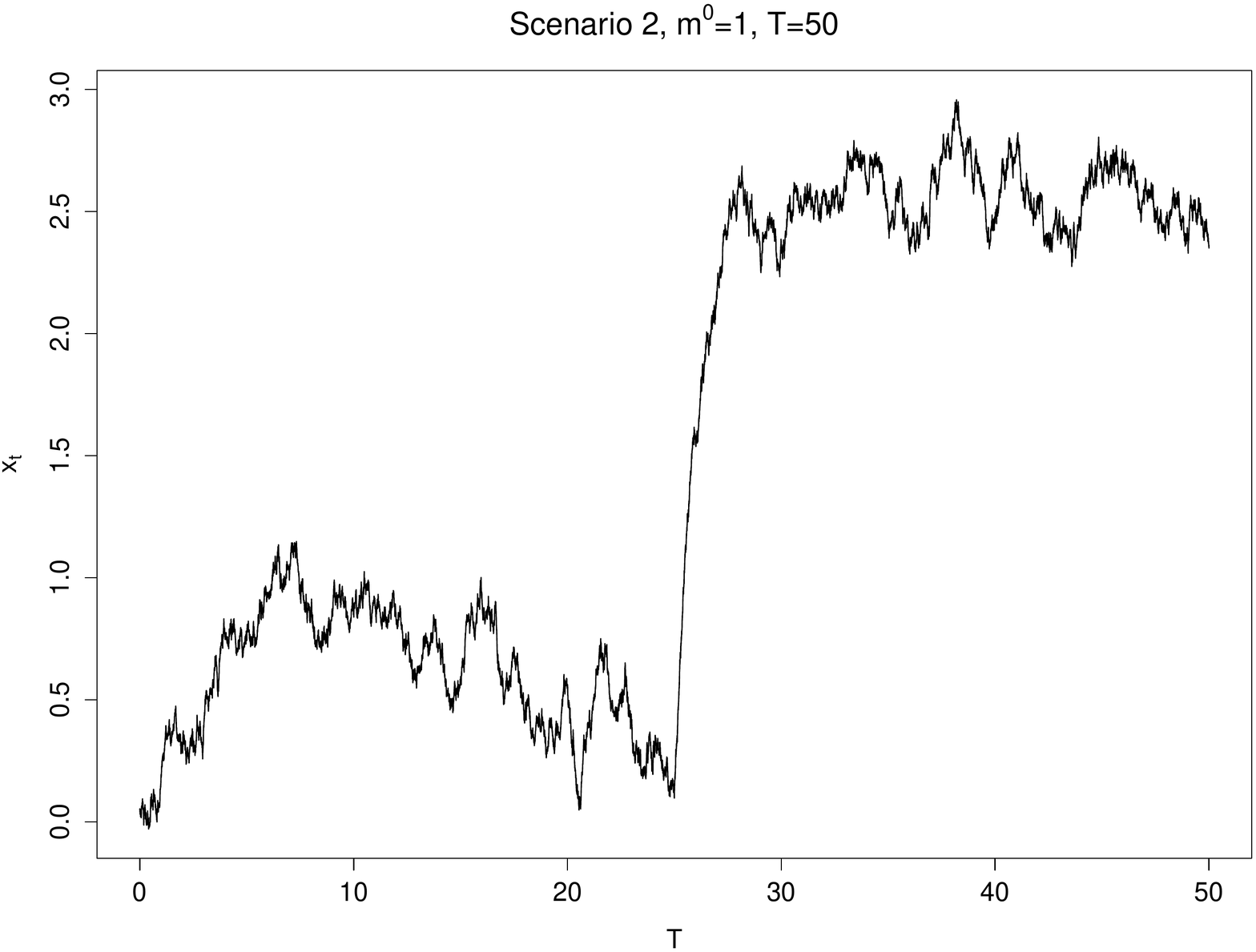}
\caption{\small Sample series for (\ref{sim2-1}) under scenario 2 with one change point}
\end{figure}

\begin{figure}[htbp]
\includegraphics[height=4in,width=2.9in]{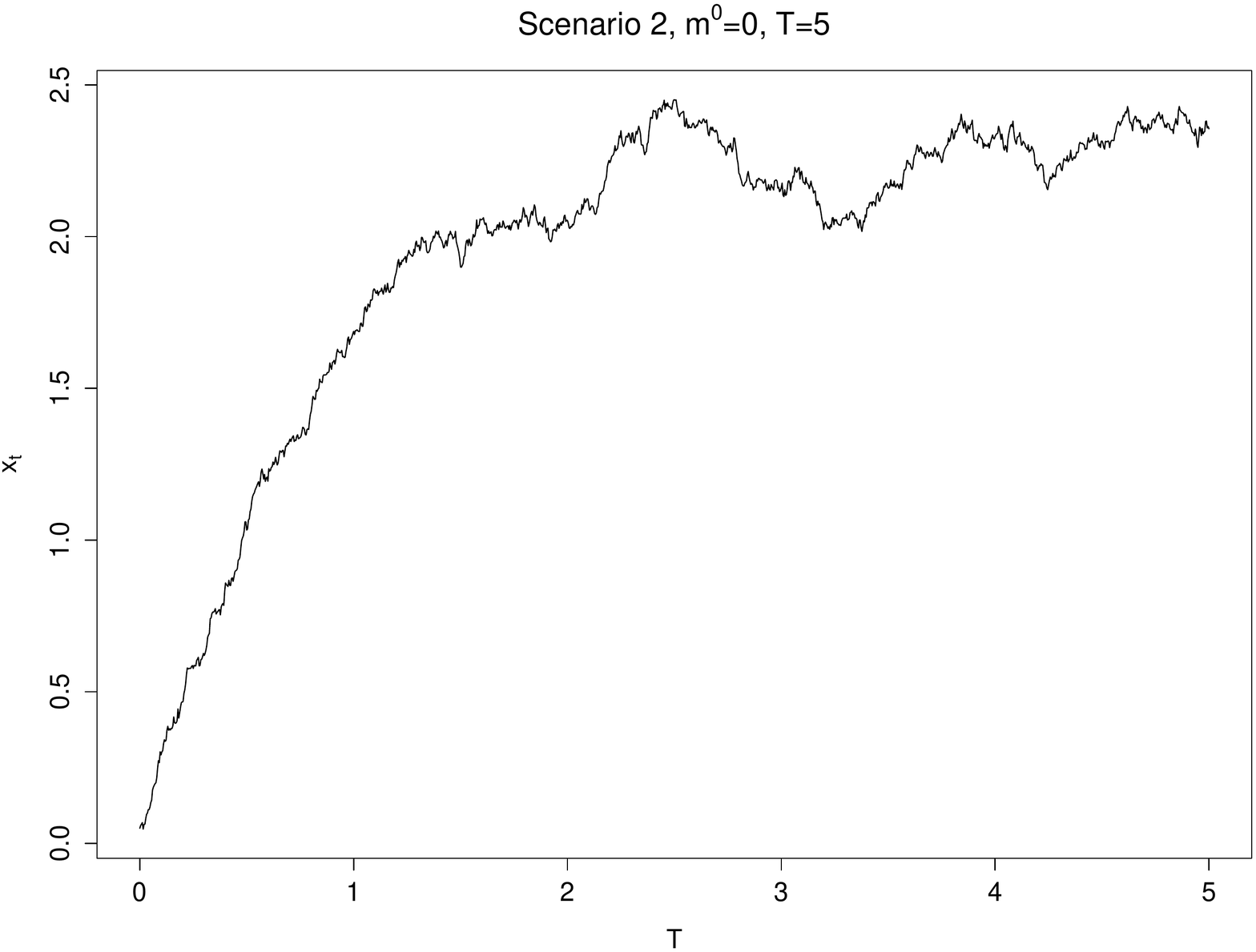}
\includegraphics[height=4in,width=2.9in]{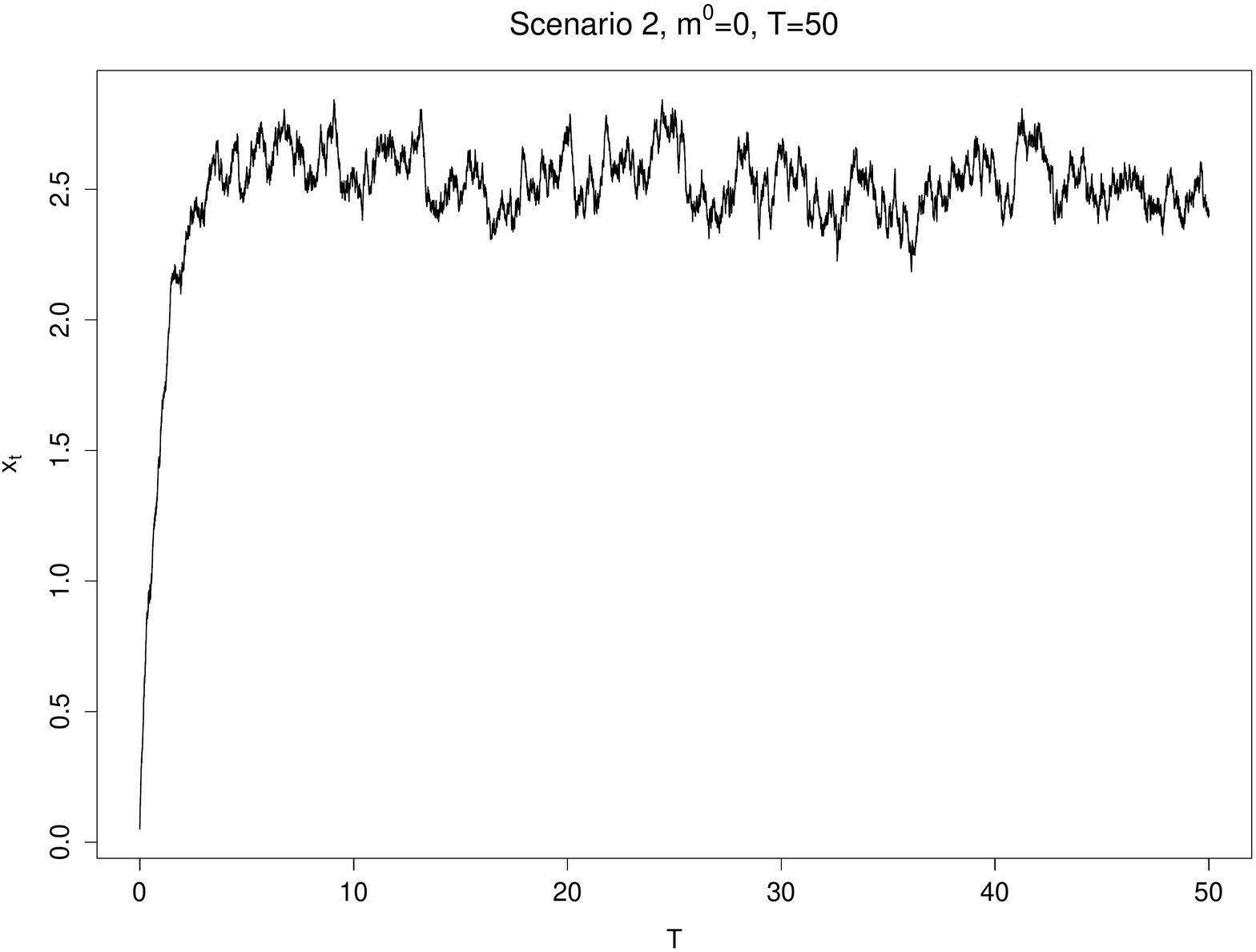}
\caption{\small Sample series for (\ref{sim2-2}) under scenario 2 without a change point}
\label{fig:sampleseries4}
\end{figure}

\subsubsection{Discussion of simulation results}
\noindent {\bf Estimating the rate of change point}\\
\noindent We first look at the performance of (\ref{cp1}) and (\ref{mle1}) in estimating
the corresponding rate of change point in (\ref{sim1-1}) and (\ref{sim2-1}), respectively. Note that
in each iteration, we apply (\ref{cp1}) and (\ref{mle1}) to the same simulated process;
hence, results from these two methods are expected to be close to each other.
The mean and mean-squared error (MSE) of the estimated rate of change point $\hat{s}$
for (\ref{sim1-1}) based on LSSE and MLL methods are summarised in Table~\ref{table1},
and the results for (\ref{sim2-1}) are displayed in Table~\ref{table2}. \\
\ \\
\begin{table}[!htbp]
\small \caption{Mean and MSE of $\hat{s}$ under scenario 1,~(\ref{sim1-1})} \centering
\begin{tabular}{|c|c|c|c|c|}
\hline
 & \multicolumn{2}{c|}{ LSSE method} &  \multicolumn{2}{c|}{MLL method}\\
\hline
  $T$ & Mean & MSE & Mean & MSE\\
\hline
5& 0.4986794& $0.0003834606$ &0.4987587&  0.0003823898\\
\hline
10& 0.499711 & 0.0001417382 & 0.4997503 &0.0001417427\\
\hline
20& 0.5004065 & $8.622138\times 10^{-5}$ & 0.5004069 & $8.623254\times 10^{-5}$\\
\hline
50&0.4999693 & $7.983829\times 10^{-6}$ & 0.4999696 & $7.982264\times 10^{-6}$\\
\hline
\end{tabular}
\label{table1}
\end{table}

\begin{table}[!htbp]
\small \caption{Mean and MSE of $\hat{s}$ under scenario 2,~(\ref{sim2-1})} \centering
\begin{tabular}{|c|c|c|c|c|}
\hline
 & \multicolumn{2}{c|}{ LSSE method} &  \multicolumn{2}{c|}{MLL method}\\
\hline
  $T$ & Mean & MSE & Mean & MSE\\
\hline
5& 0.4992968 & $0.0001146825$ & 0.4996579 &  0.0001042964 \\
\hline
10& 0.5003373 & $1.884511\times 10^{-5}$ & 0.500502 &  $2.130748\times 10^{-5}$ \\
\hline
20&0.5002268 & $6.765125\times 10^{-6}$ & 0.5002948 &  $6.991292\times 10^{-6}$ \\
\hline
50& 0.5001443 & $1.750852\times 10^{-6}$ &0.5001291 & $1.742431\times 10^{-6}$\\
\hline
\end{tabular}
\label{table2}
\end{table}
\noindent To illustrate further the simulated results, the corresponding
histograms for $\hat{s}$ in (\ref{sim1-1}) are depicted in Figure~\ref{figure1}
under scenario 1, and in Figure~\ref{figure2} under scenario 2.

\begin{figure}[htbp]
\includegraphics[height=2.1in,width=2.9in]{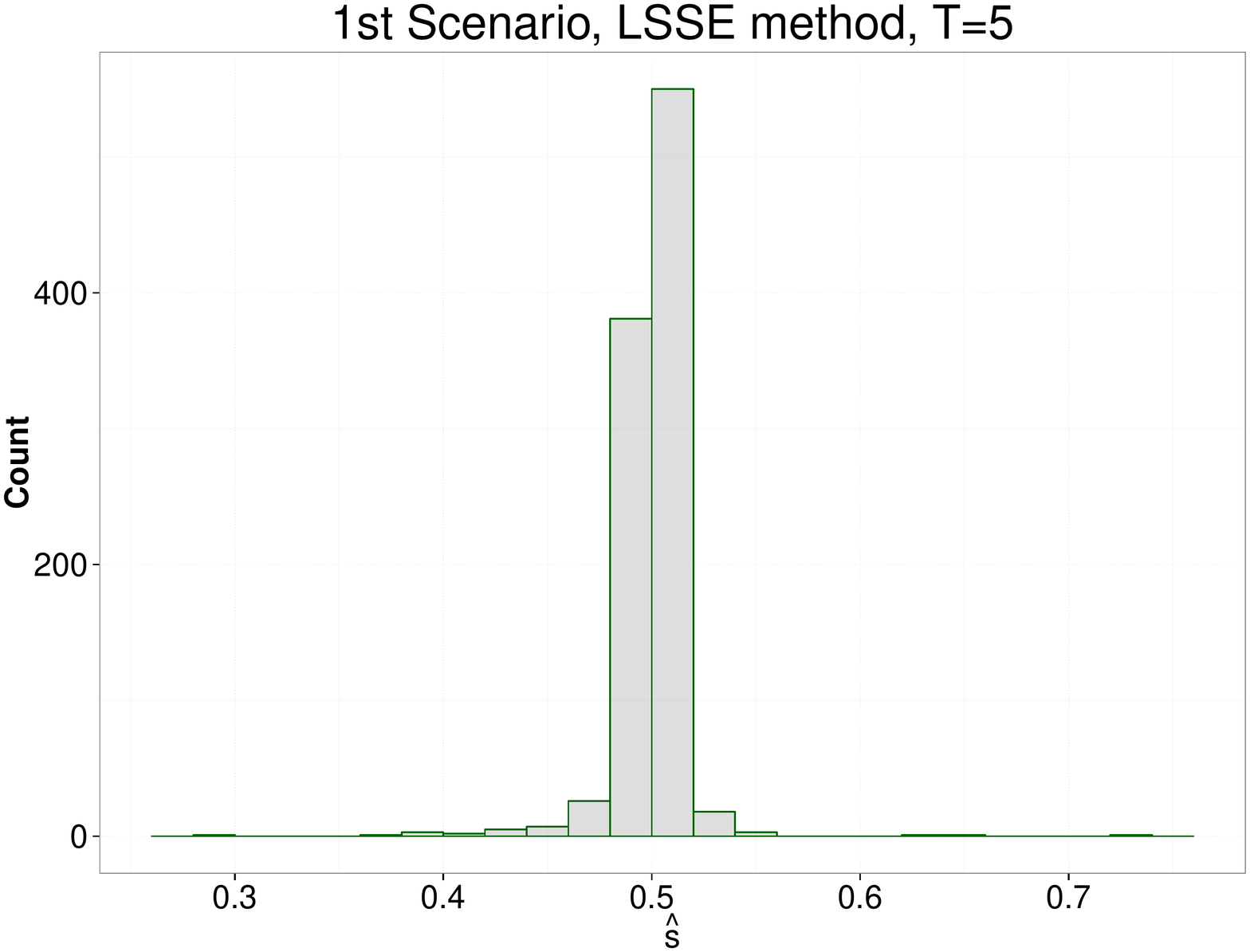}
\includegraphics[height=2.1in,width=2.9in]{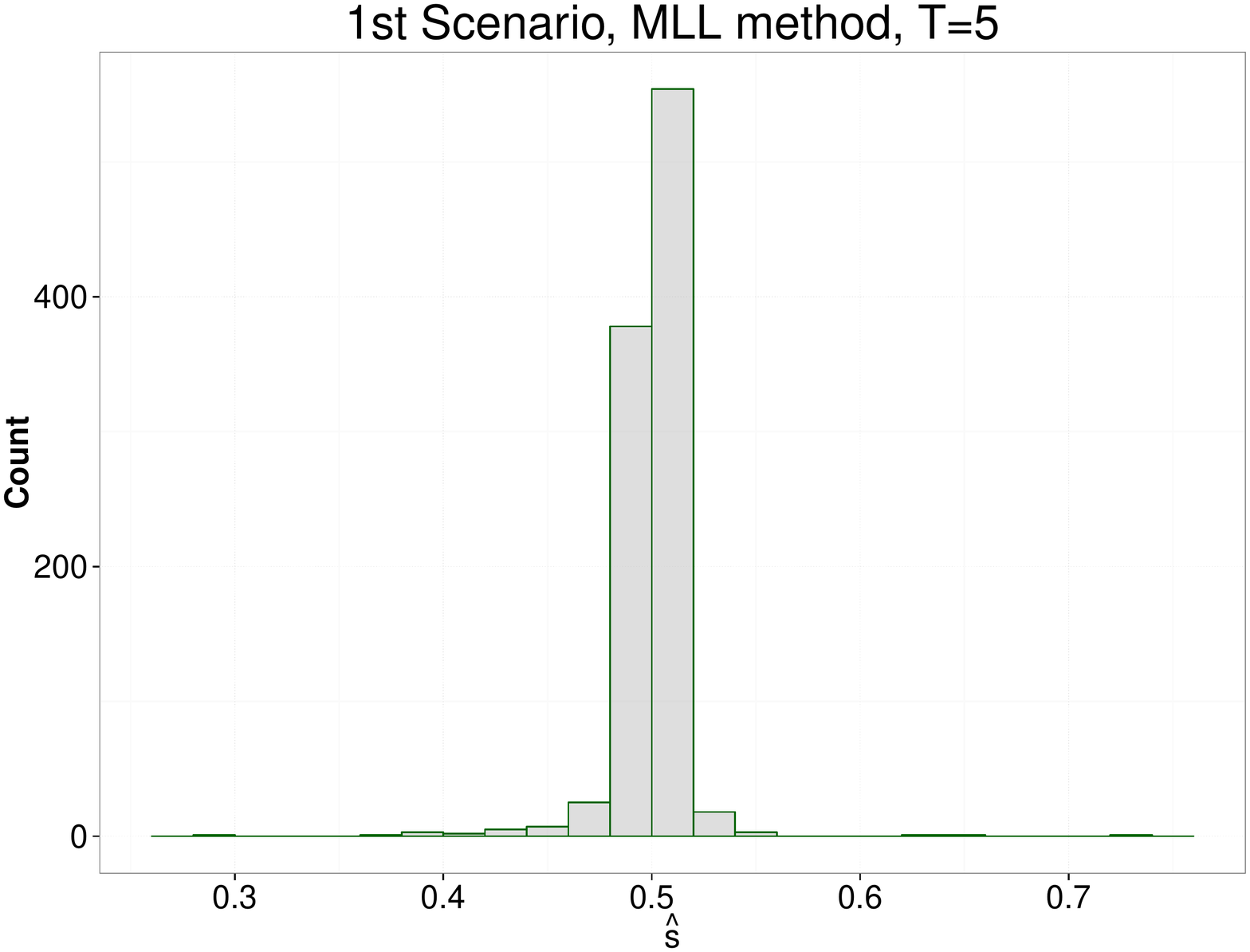}\\
\includegraphics[height=2.1in,width=2.9in]{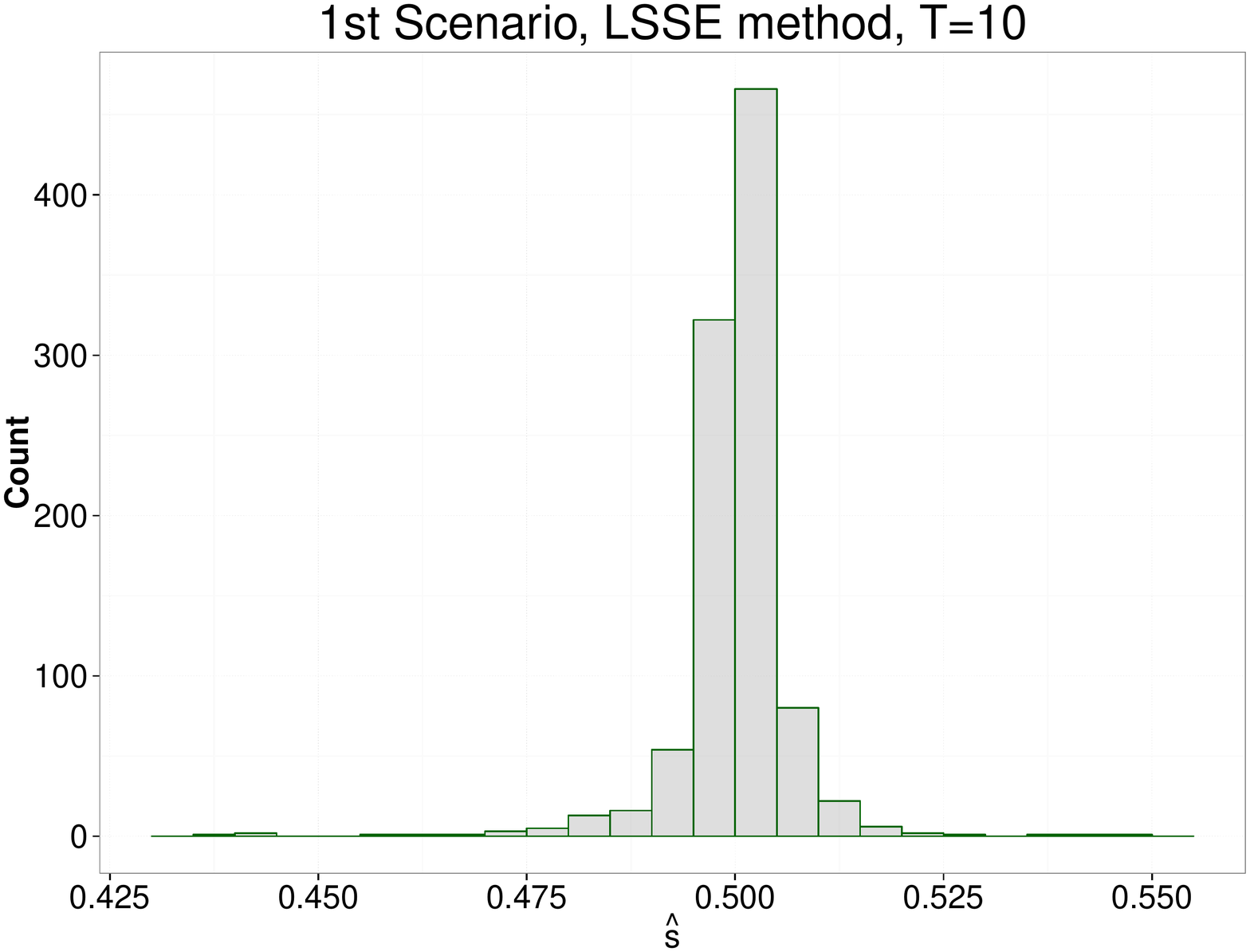}
\includegraphics[height=2.1in,width=2.9in]{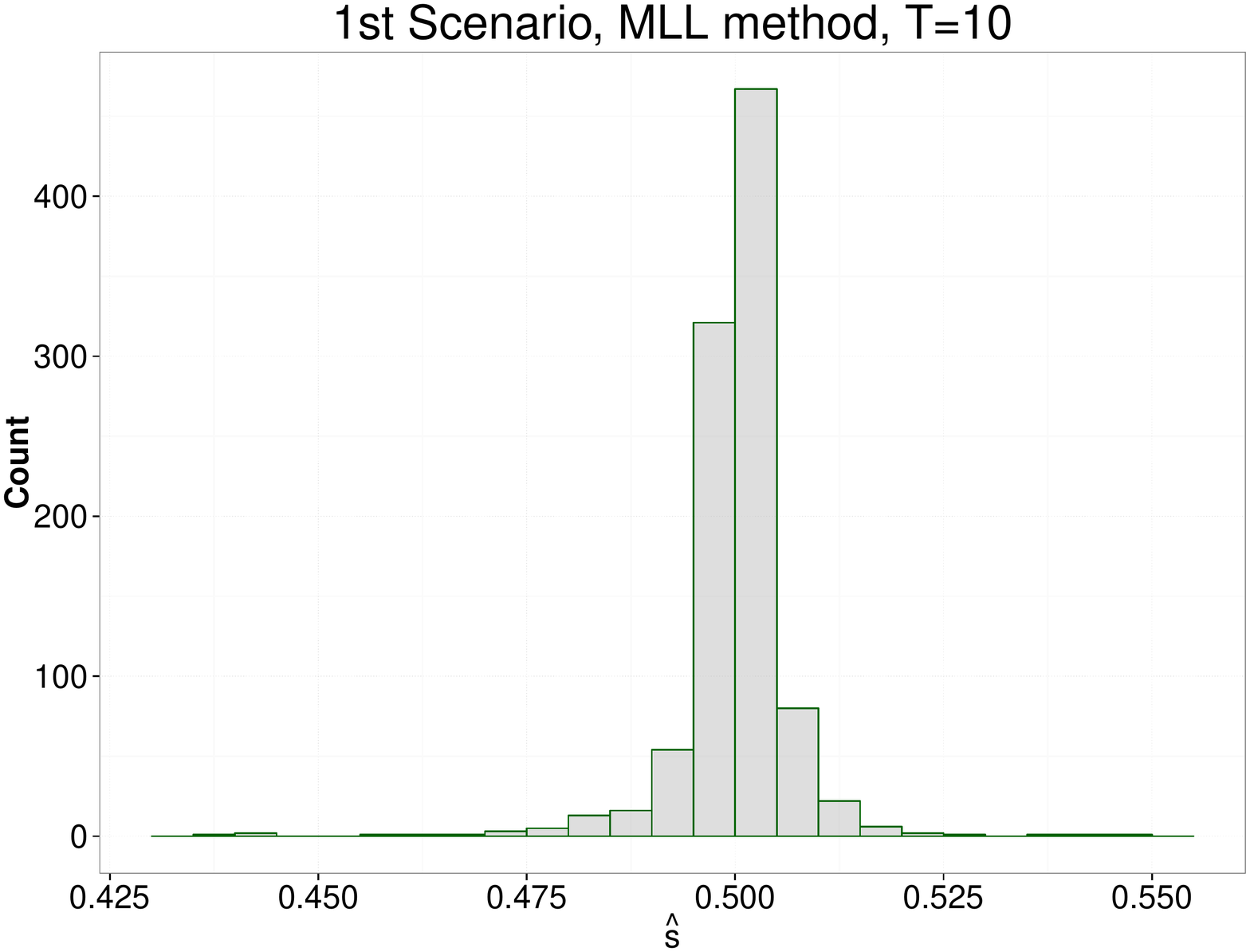}\\
\includegraphics[height=2.1in,width=2.9in]{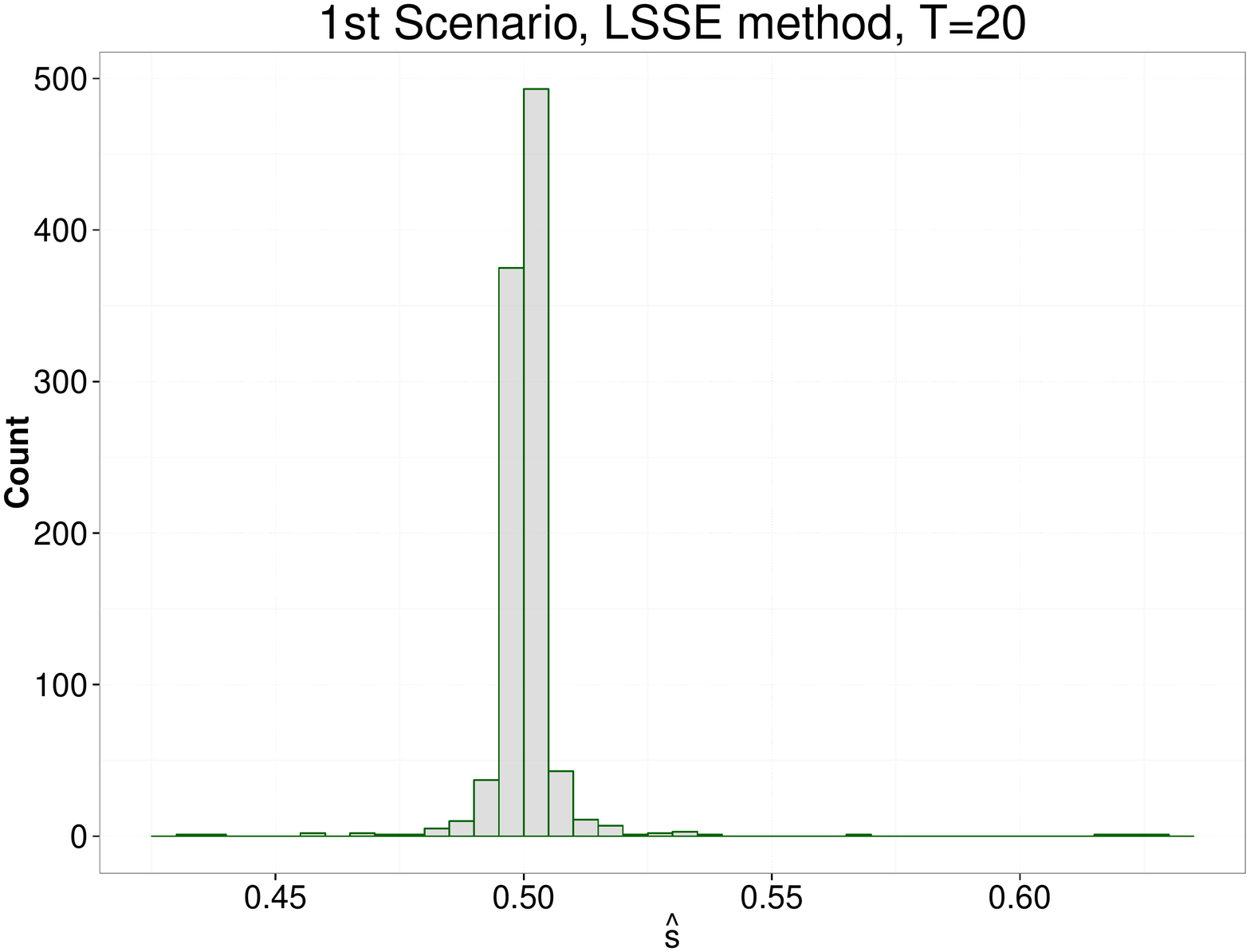}
\includegraphics[height=2.1in,width=2.9in]{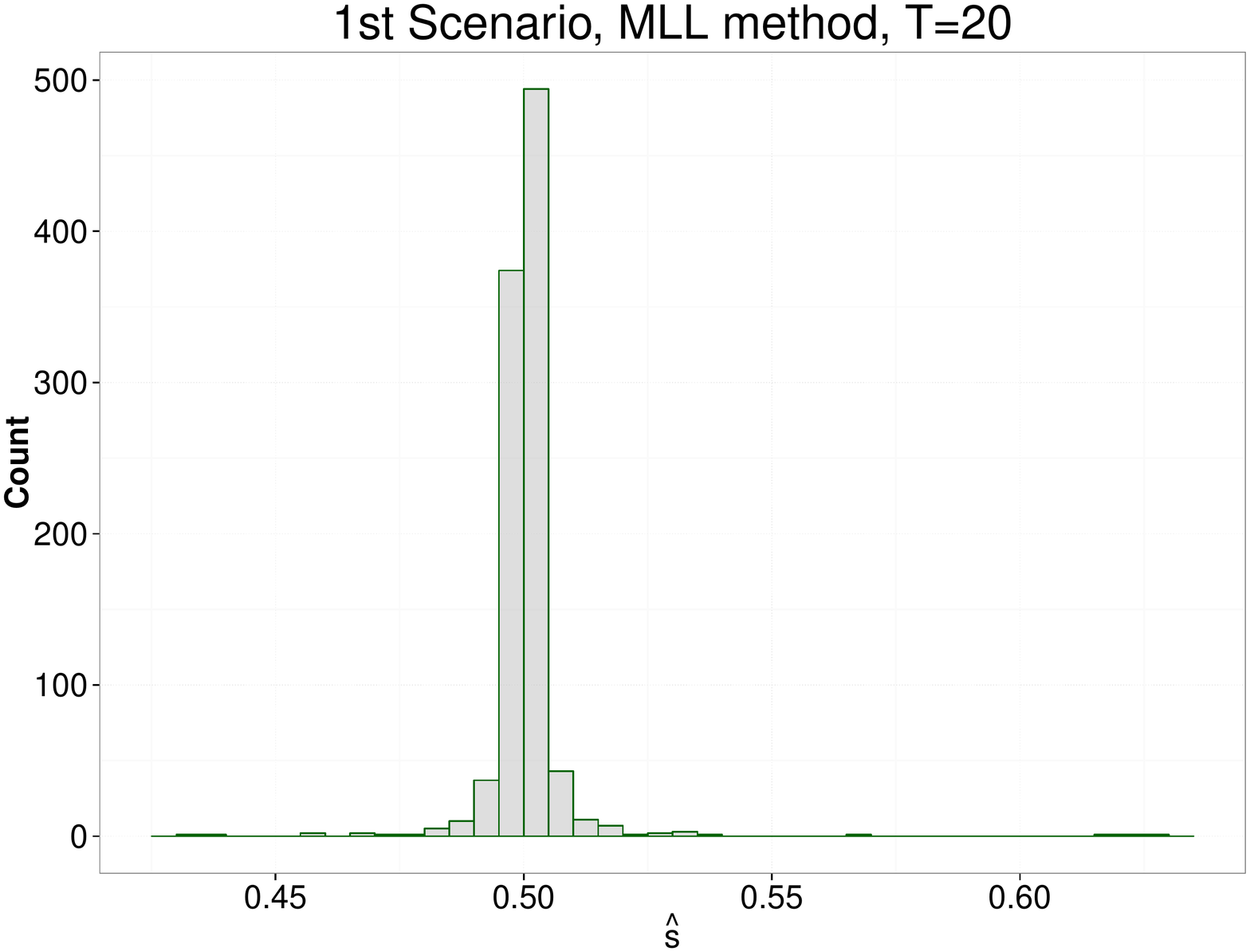}\\
\includegraphics[height=2.1in,width=2.9in]{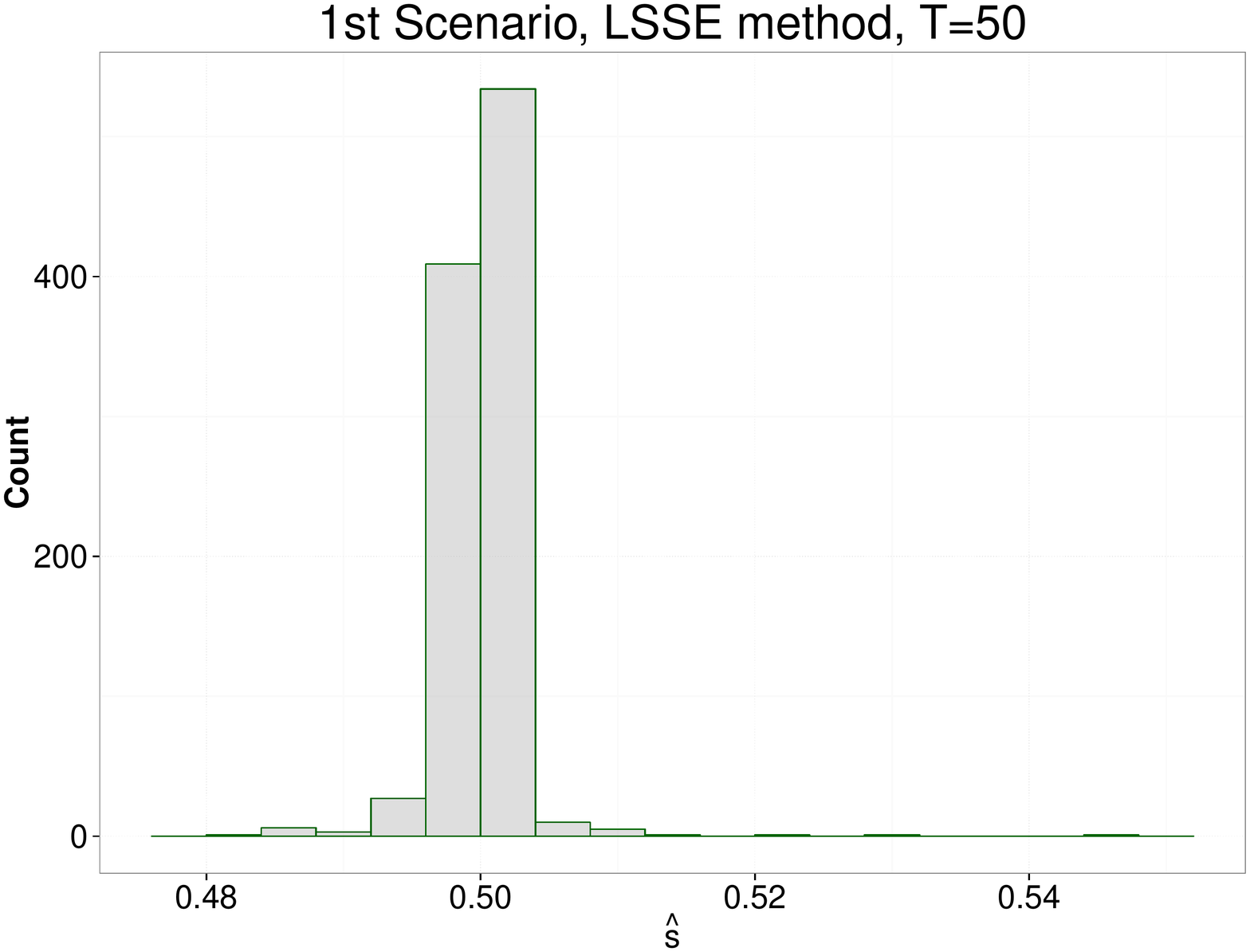}
\includegraphics[height=2.1in,width=3in]{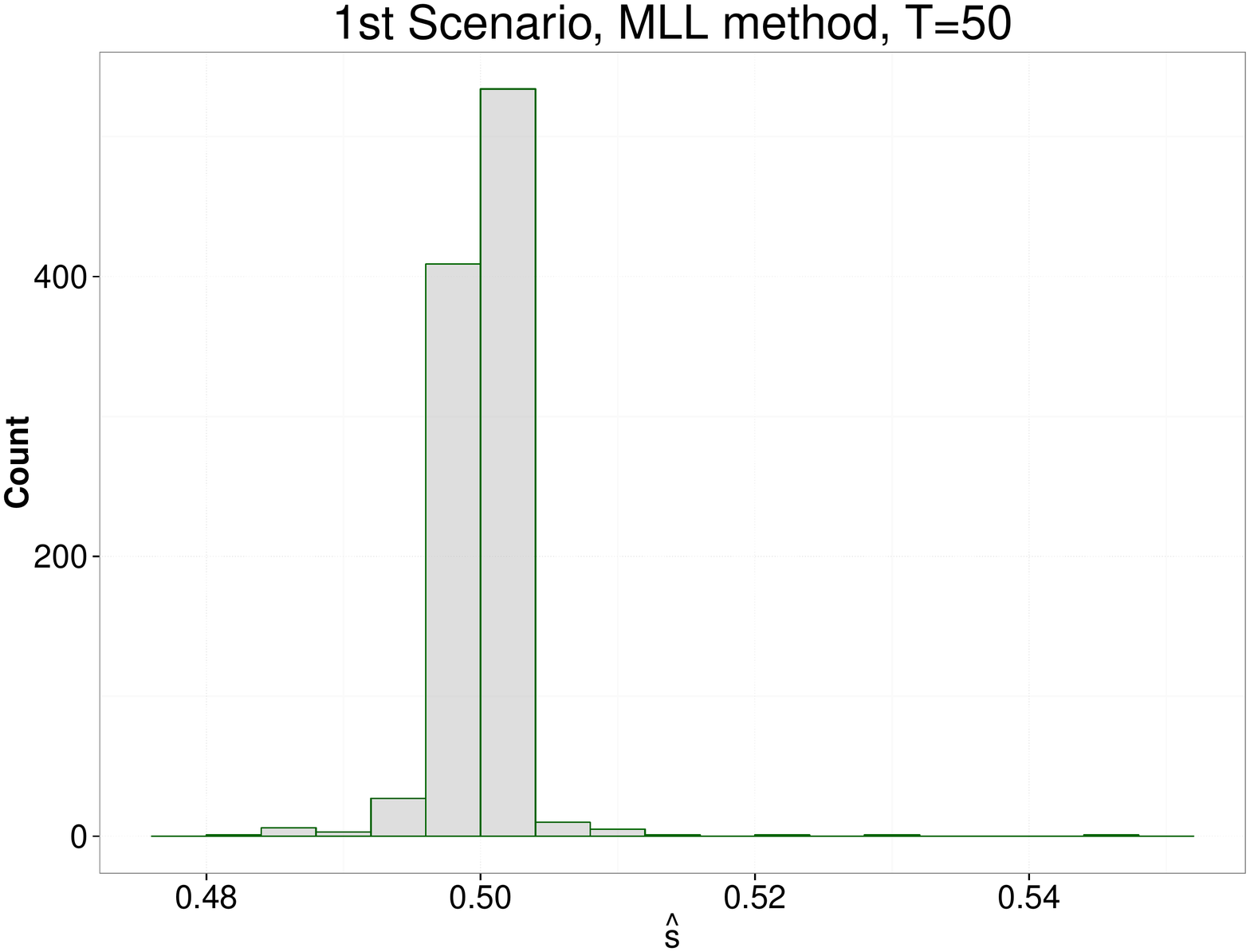}
\caption{\small Histogram of $\hat{s}$ for Scenario 1 (\ref{sim1-1})}
\label{figure1}
\end{figure}

\begin{figure}[htbp]
\includegraphics[height=2.05in,width=2.9in]{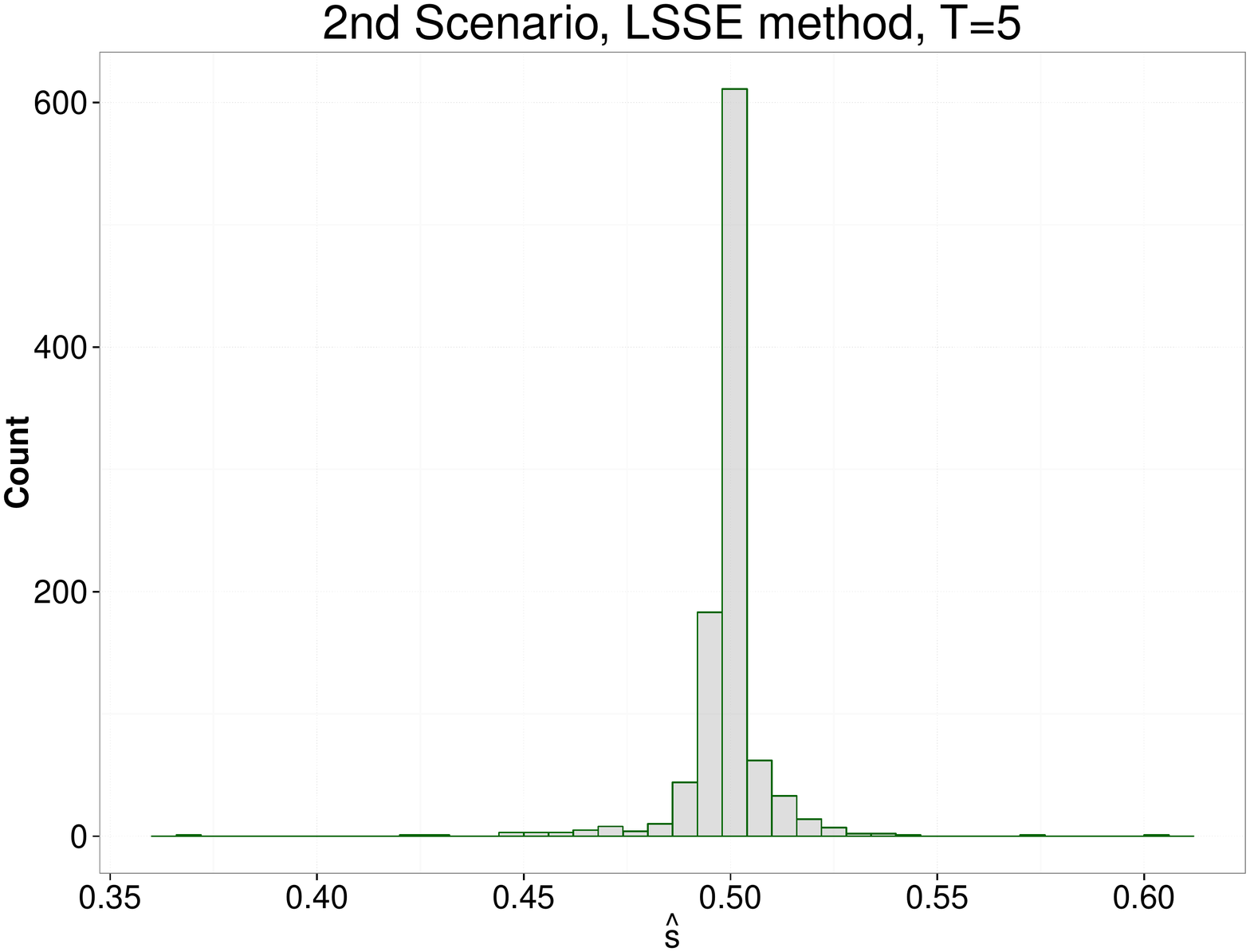}
\includegraphics[height=2.05in,width=2.9in]{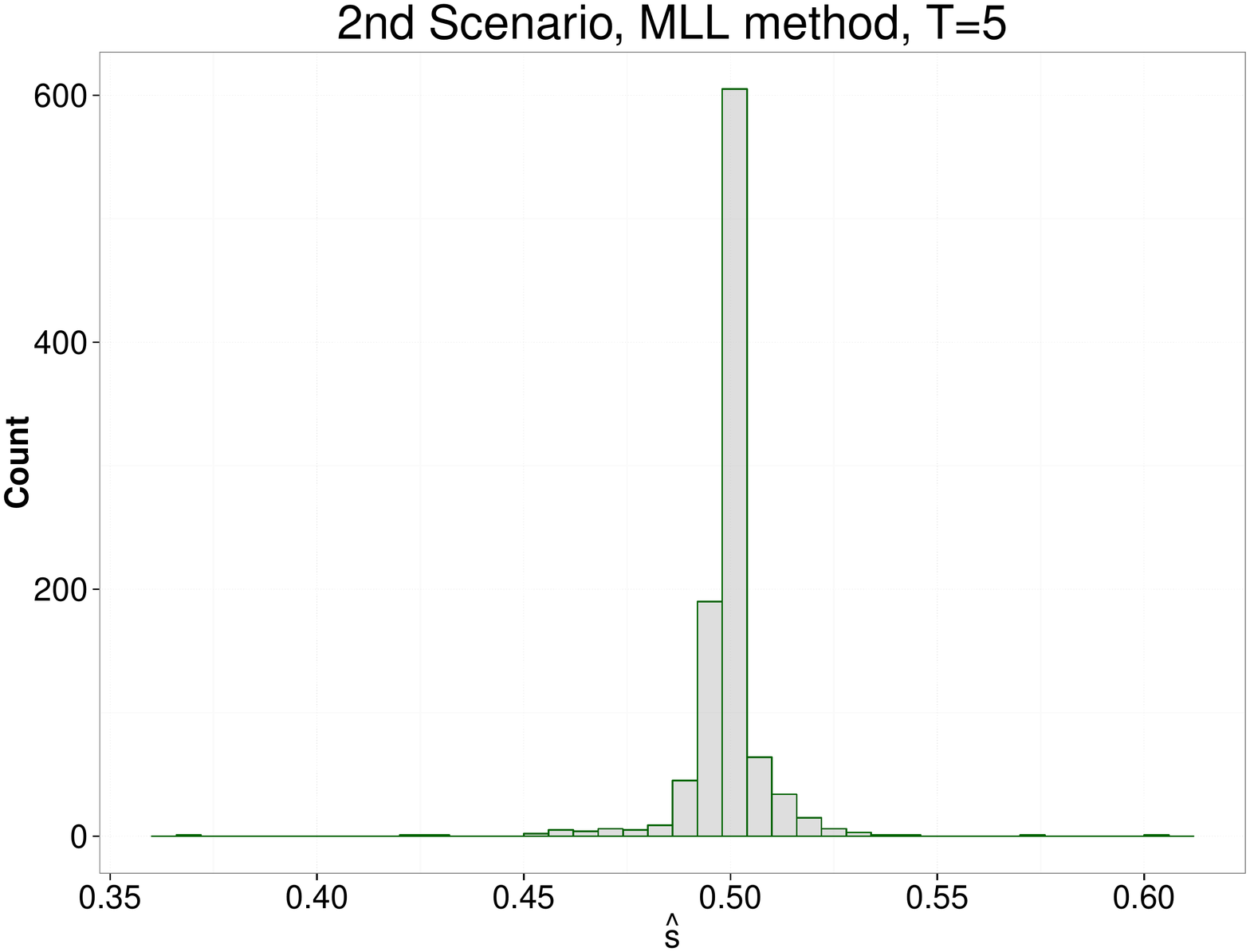}\\
\includegraphics[height=2.05in,width=2.9in]{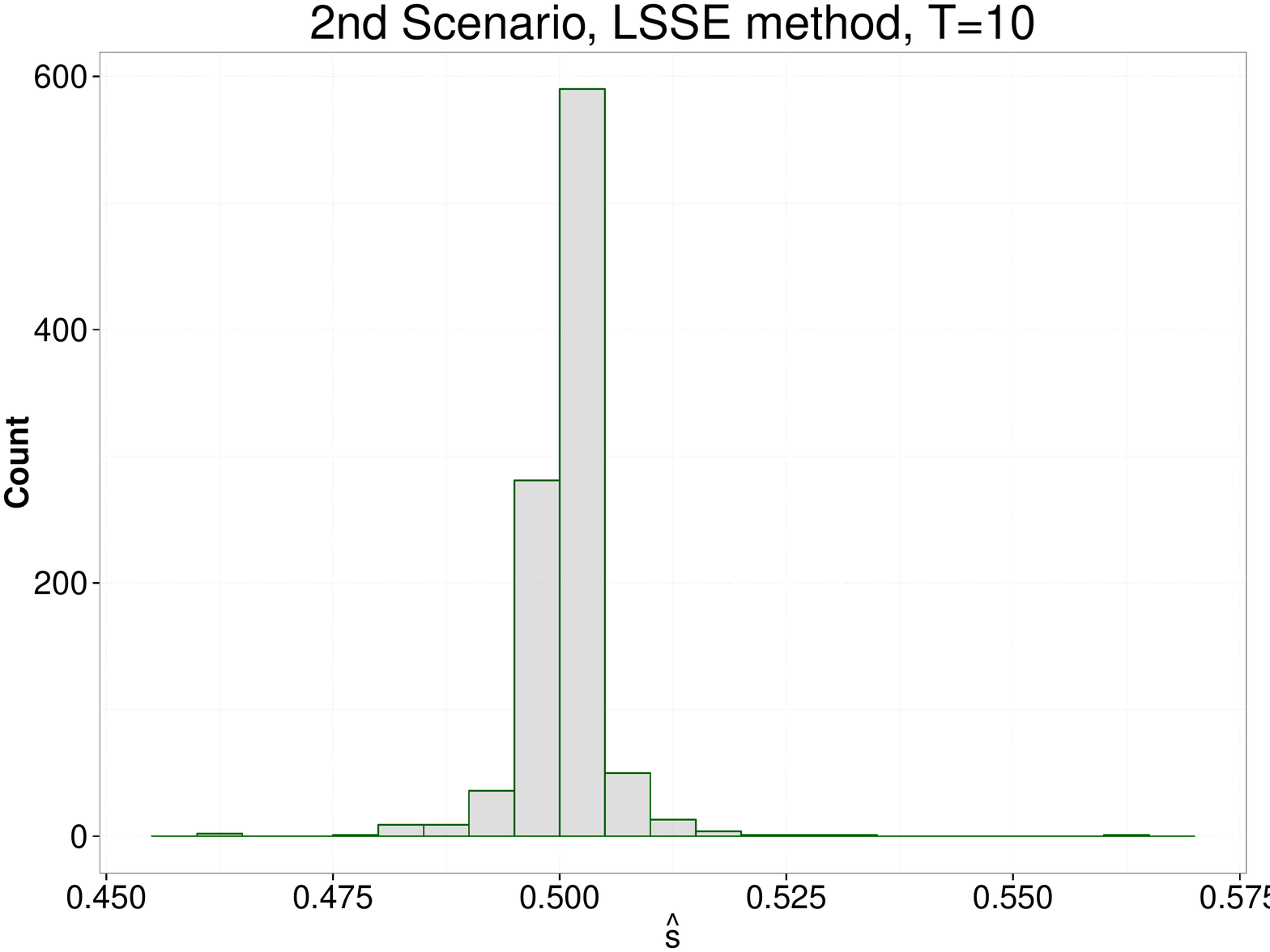}
\includegraphics[height=2.05in,width=2.9in]{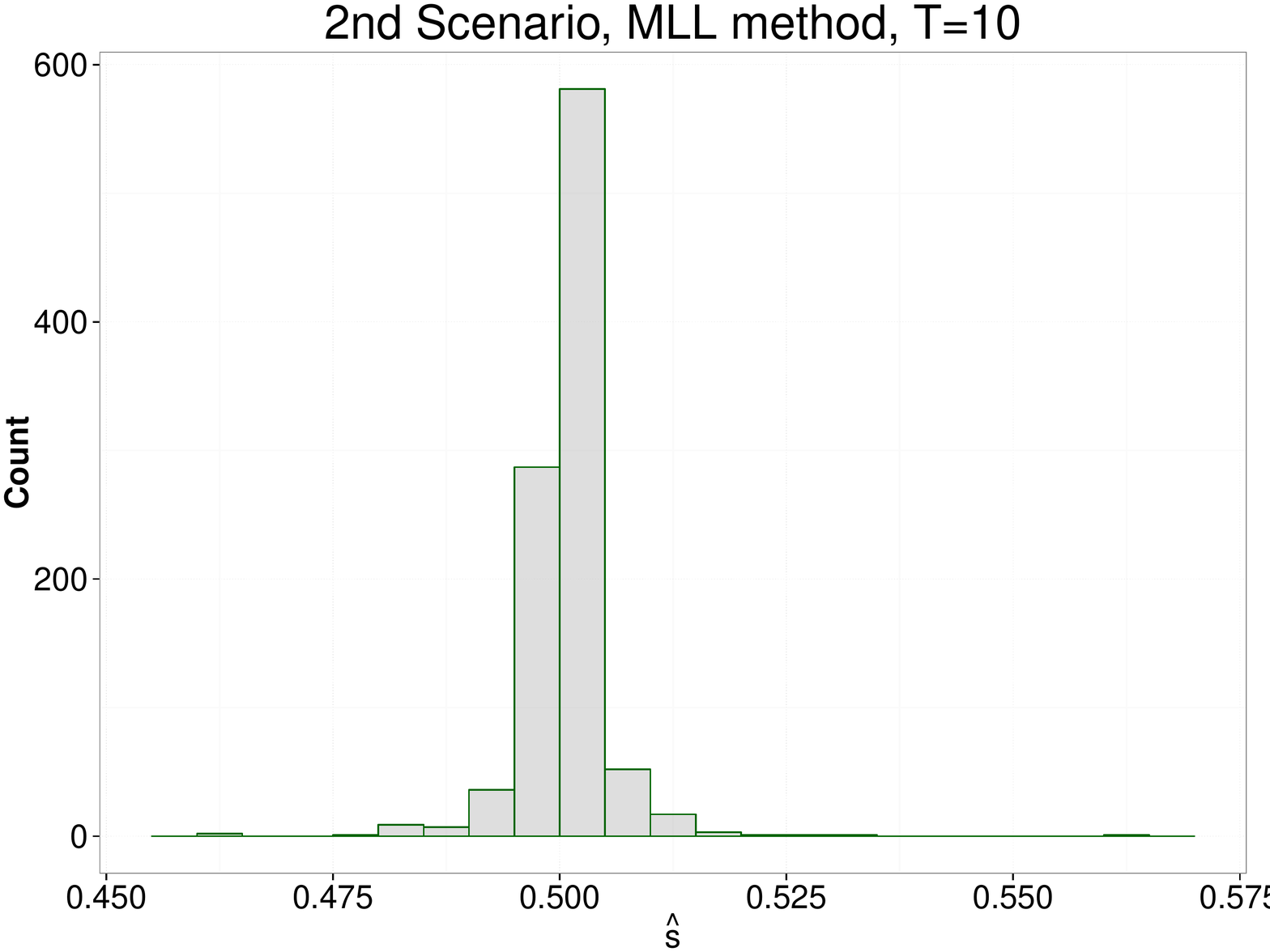}\\
\includegraphics[height=2.05in,width=2.9in]{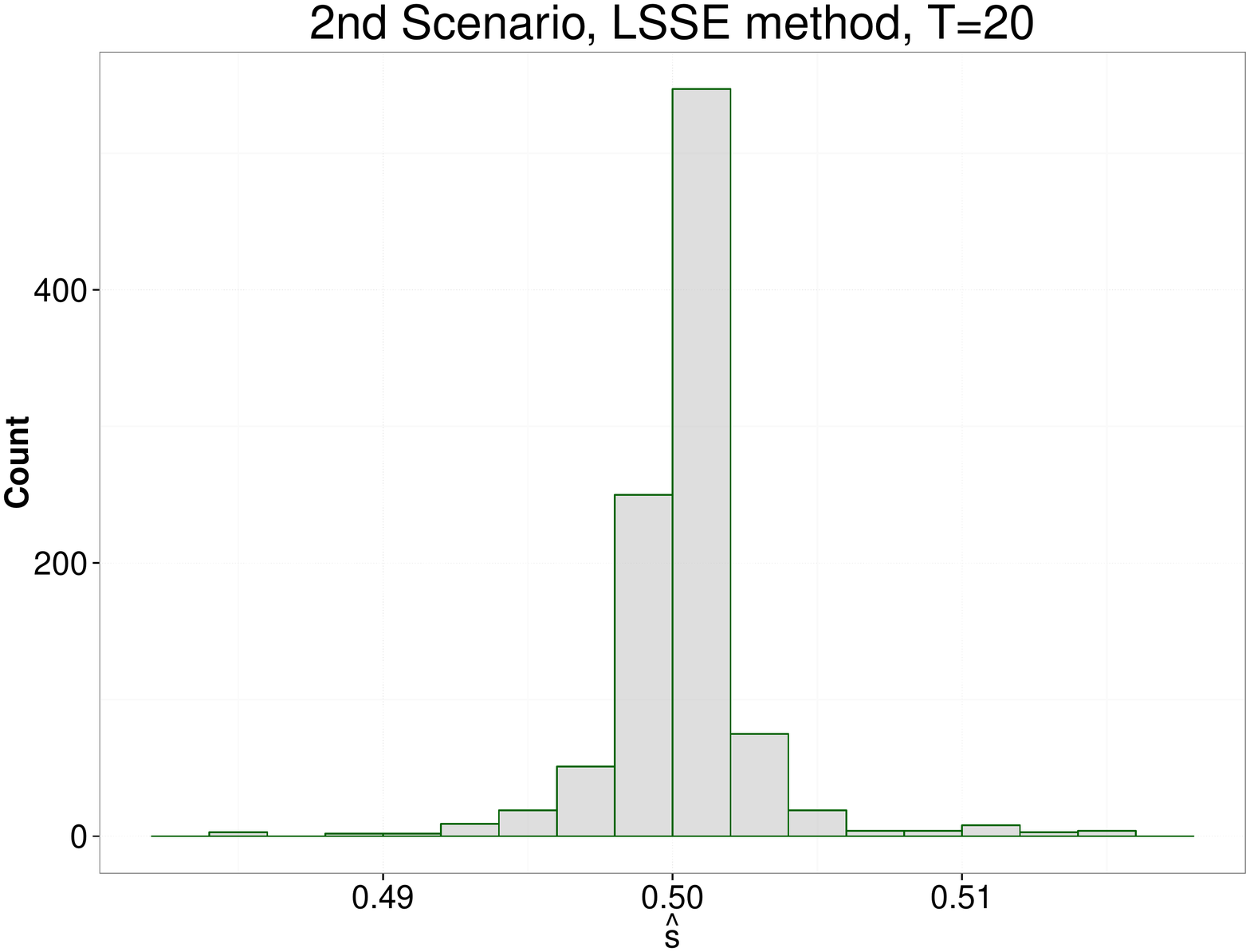}
\includegraphics[height=2.05in,width=2.9in]{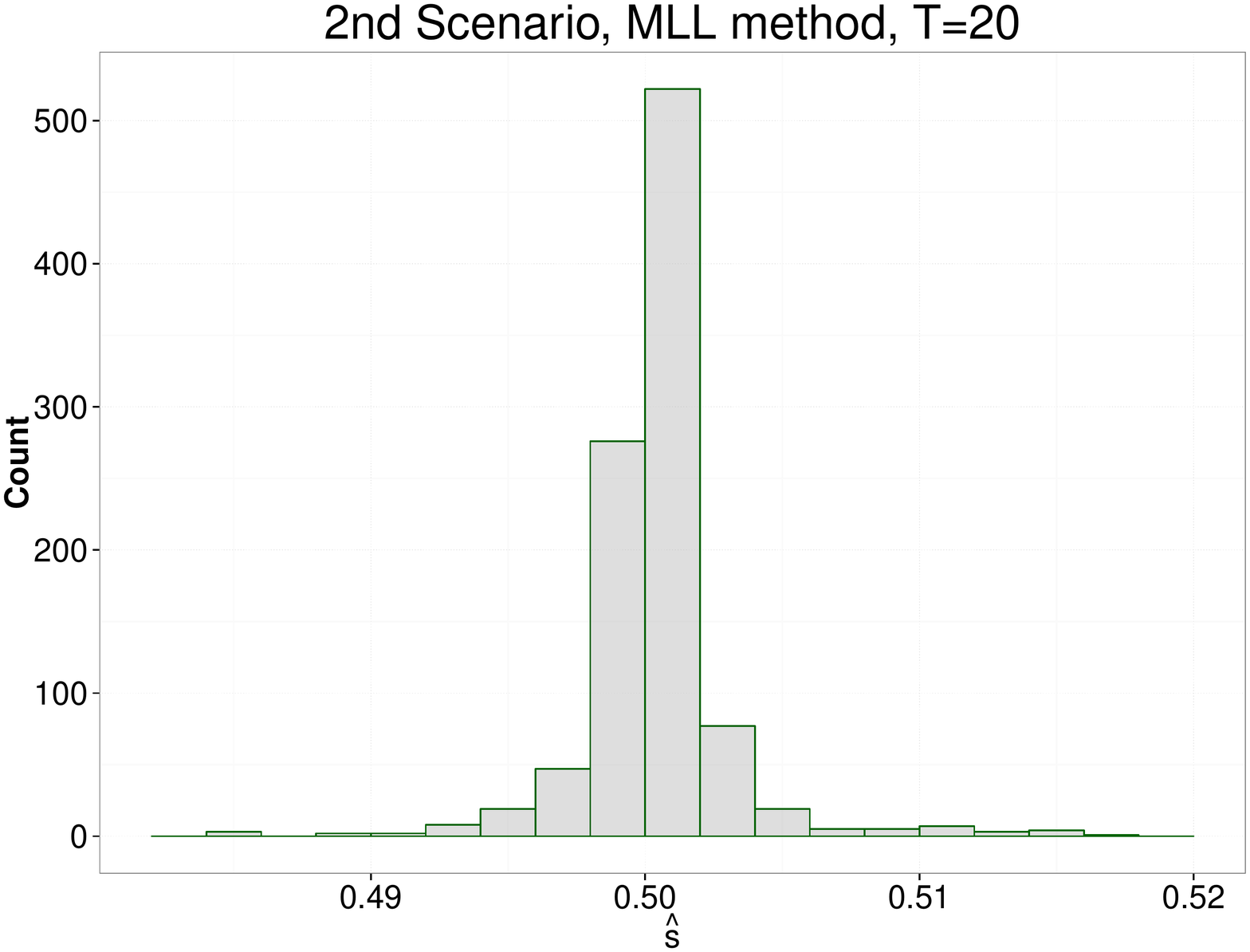}\\
\includegraphics[height=2.05in,width=2.9in]{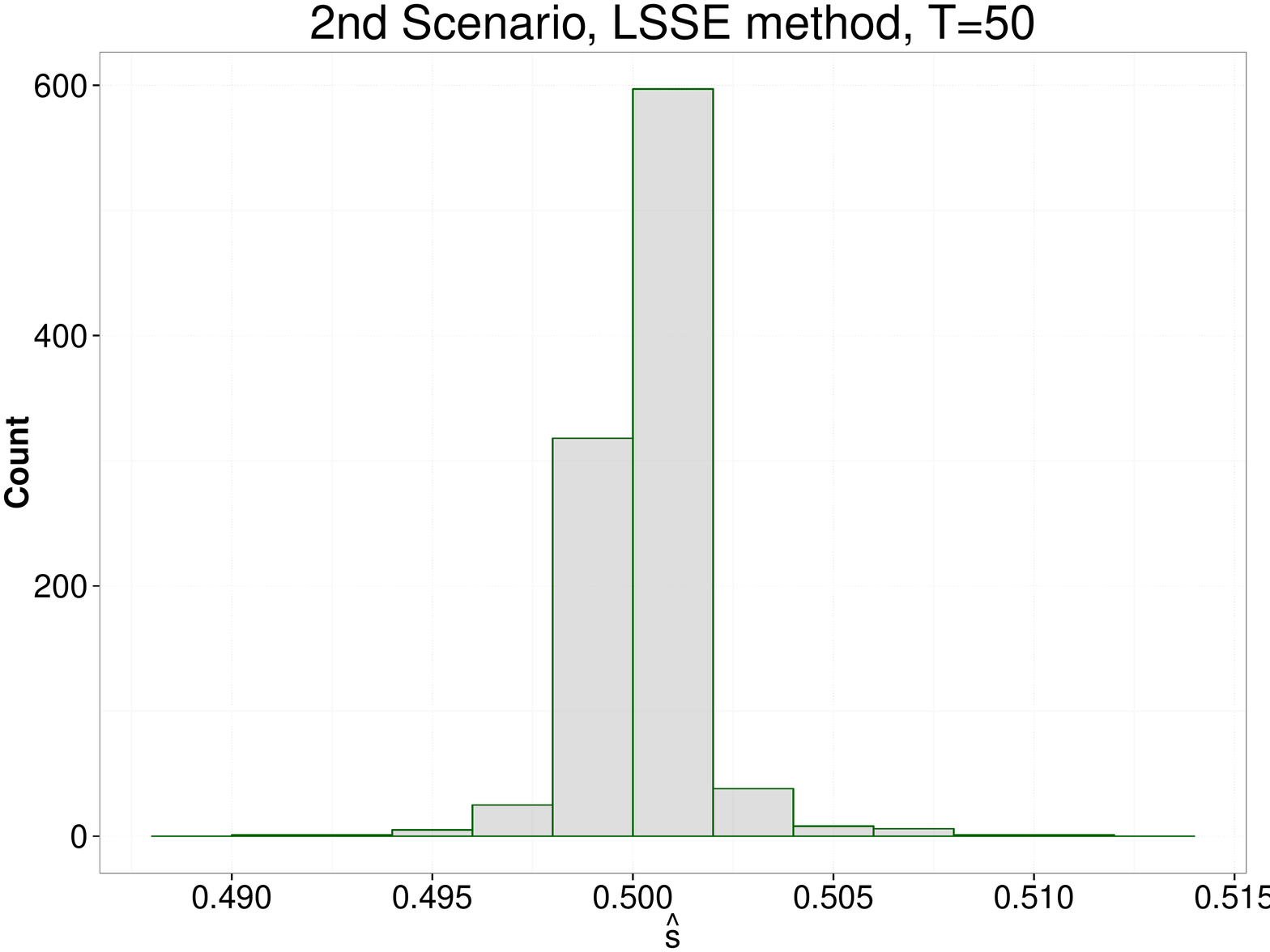}
\includegraphics[height=2.05in,width=3in]{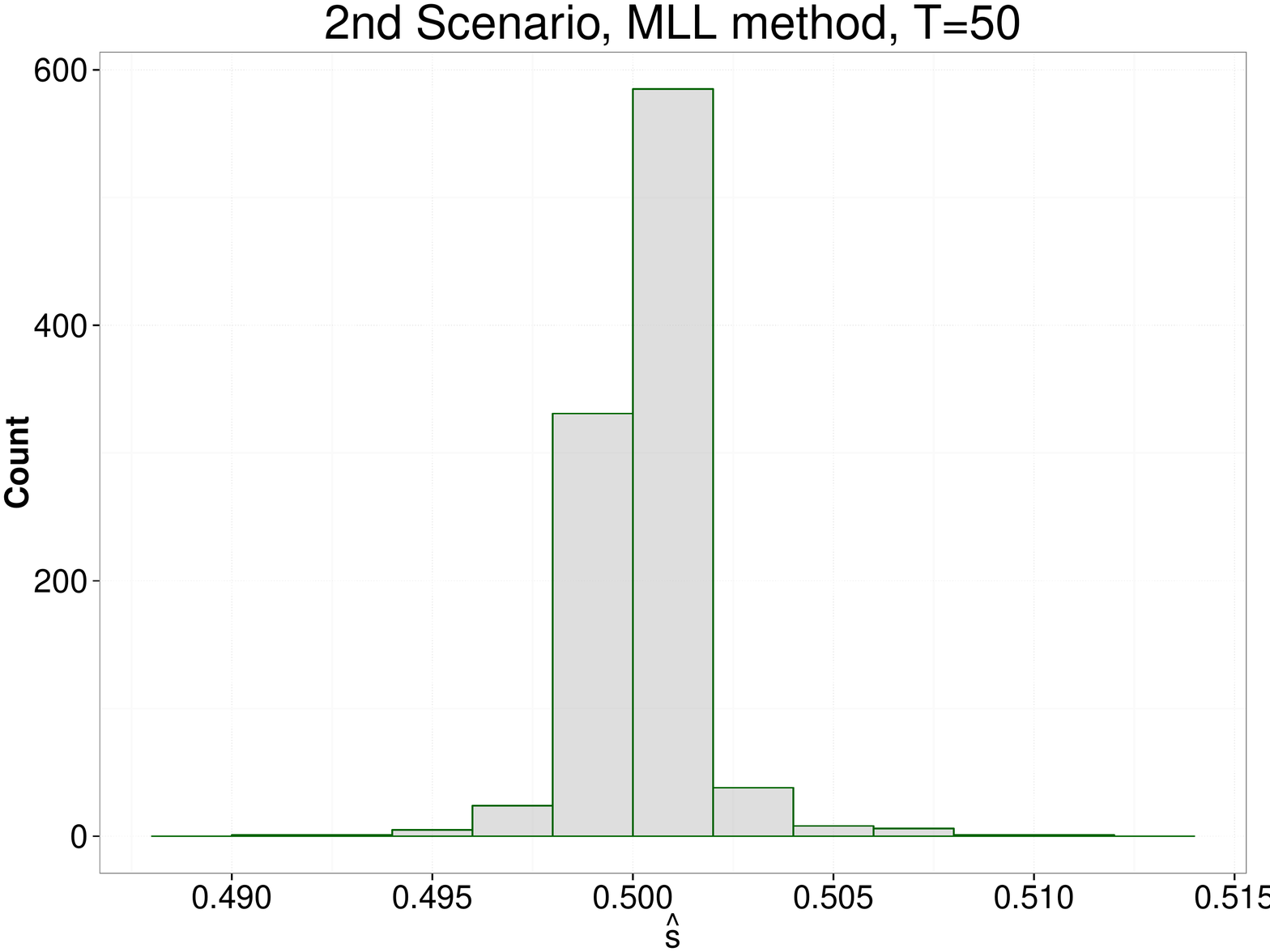}
\caption{\small Histogram of $\hat{s}$ for scenario 2,~(\ref{sim2-1})}
\label{figure2}
\end{figure}

\ \\
\noindent {\bf Estimating and testing the number of change points}\\
\noindent Here, we evaluate the performance of the proposed method by utilising the percent accuracy (PA) metric defined by
$$\mathrm{PA}(m^0)=\frac{1}{1000}\displaystyle{\sum_{i=1}^{1000}}I_{(\hat{m}_i=m^0)}\times 100\%, $$
where $\hat{m}_i$ is the estimated number of change points in the $i$th iteration. Note that $$1-\mathrm{PA}(0)=\frac{1}{1000}\displaystyle{\sum_{i=1}^{1000}}I_{\left(\mathcal{IC}(\hat{m}_i=0)\geq \mathcal{IC}(\hat{m}_i=1)\right)},$$
which is the empirical significance level. Further,
$$\mathrm{PA}(1)=\frac{1}{1000}\displaystyle{\sum_{i=1}^{1000}}I_{\left(\mathcal{IC}(\hat{m}_i=0)\geq \mathcal{IC}(\hat{m}_i=1)\right)},$$
which is the empirical power of the proposed test.\\
\ \\
\noindent As stated in the previous subsection, we aim to
assess the performance of (\ref{ic}); and for this purpose we use the criteria
$\phi(T)=\log T$ and $\phi(T)=\log (T/\Delta_t)$. Moreover, for $h(p)$,
we also consider two cases: $h(p)=p+1$ and $h(p)=p+2$ for whether or not there is a potential change in the diffusion coefficient $\sigma$ specified in (\ref{ou1}).
Consequently, we make a comparison on the basis of four penalty criteria:
$(p+1)\log T$, $(p+2)\log T$, $(p+1)\log(T/\Delta_t)$ and $(p+2)\log(T/\Delta_t)$.
The results are reported in Tables~\ref{table3}--\ref{table6} for each of the
scenarios.

\begin{table}[!htbp]
\small \caption{Empirical power of the test (in \%), under scenario 1, (\ref{sim1-1})} \centering
\begin{tabular}{|c|c|c|c|c|}
\hline
$T$&  $(p+1)\log T$ & $(p+2)\log T$ & $(p+1)\log(T/\Delta_t)$ & $(p+2)\log(T/\Delta_t)$\\
\hline
 $T$=5&100& 100 & 98.9 & 96.5\\
\hline
 $T$=10&100& 100 & 99.4 & 97.3\\
\hline
 $T$=20&100& 100 & 99.7 & 97.5\\
\hline
$T$=50&100&100& 99.6& 97.2 \\
\hline
\end{tabular}
\label{table3}
\end{table}

\begin{table}[!htbp]
\small \caption{Empirical significance level (in \%), under scenario 1, (\ref{sim1-2})} \centering
\begin{tabular}{|c|c|c|c|c|}
\hline
$T$&  $(p+1)\log T$ & $(p+2)\log T$ & $(p+1)\log(T/\Delta_t)$ & $(p+2)\log(T/\Delta_t)$\\
\hline
 $T$=5&83.1&67.6 & 1.5 & 0.1\\
\hline
 $T$=10&67.5& 46.8 &0.6 & 0\\
\hline
 $T$=20&54.4& 17.9 & 0 &0\\
\hline
$T$=50&19.8&5.2&0&0 \\
\hline
\end{tabular}
\label{table4}
\end{table}

\begin{table}[!htbp]
\small \caption{Empirical power of the test (in \%), under scenario 2, (\ref{sim2-1})} \centering
\begin{tabular}{|c|c|c|c|c|}
\hline
$T$&  $(p+1)\log T$ & $(p+2)\log T$ & $(p+1)\log(T/\Delta_t)$ & $(p+2)\log(T/\Delta_t)$\\
\hline
 $T$=5&100& 100 & 100 & 100\\
\hline
 $T$=10&100& 100 &100 & 100\\
\hline
 $T$=20&100& 100 & 100 & 100\\
\hline
$T$=50&100&100& 100& 100 \\
\hline
\end{tabular}
\label{table5}
\end{table}

\begin{table}[!htbp]
\small \caption{Empirical significance level (in \%), under scenario 2, (\ref{sim2-2})} \centering
\begin{tabular}{|c|c|c|c|c|}
\hline
$T$&  $(p+1)\log T$ & $(p+2)\log T$ & $(p+1)\log(T/\Delta_t)$ & $(p+2)\log(T/\Delta_t)$\\
\hline
 $T$=5&78.7& 55.4 & 0.4 & 0\\
\hline
 $T$=10&57.2&29.5 &0 & 0\\
\hline
 $T$=20&38.5& 13.8 & 0 & 0\\
\hline
$T$=50&10.8&2.6& 0& 0 \\
\hline
\end{tabular}
\label{table6}
\end{table}

\noindent From Tables~\ref{table1} and~\ref{table2}
as well as the plotted histograms, we see that both proposed methods
(\ref{cp1}) and (\ref{mle1})
estimate very accurately the exact rate of change point
($s^0=0.5$). In addition, as the time period $T$ increases from 10 to 50, the MSEs
of the two estimators decrease. These outcomes
confirm the theoretical findings for the asymptotic consistency
of our two proposed methods. \\
\ \\
For the estimated number $\hat{m}$ of change points,
one could see that, when there exists one change point in the model,
(\ref{ic}) gives a high empirical power
in both scenarios
with different penalty criteria and time periods; see Tables~\ref{table3} and \ref{table5}.
Within the penalty criteria employed, $\phi(T)=\log T$ provides slightly better
empirical power than that of $\phi(T)=\log (T/\Delta_t)$.
When there is no change point, Tables~\ref{table4}
and \ref{table6} reveal that the empirical significance levels, under different penalty
criteria, decrease as $T$ increases. These results also imply
that our proposed method is asymptotically consistent. \\
\ \\
Amongst the 4 penalty criteria, we observe that
when $\phi(T)=\log T$, the empirical significance level is relatively high when $T$ is small, whilst
the empirical significance level decreases when we change $h(p)$ from $p+1$ to $p+2$.
This outcome
tells us that it would be more appropriate in this case to use a penalty criterion that is
larger than $h(p)\phi(T)=(p+1)\log T$ for better estimation. On the other hand,
when using $\phi(T)=\log (T/\Delta_t)$, the performance is significantly
improved compared to that of $\phi(T)=\log T$.
In both scenarios, one could see that when the time period is small ($T$=5 and $T$=10), the empirical significance level of the proposed method is relatively high when using $\phi(T)=\log T$,
but decreases to almost 0\% when $\phi(T)=\log (T/\Delta_t)$. \\
\ \\
Overall, based
on the results in Tables~\ref{table3}--\ref{table6} for different cases,
we find that for a bigger $T$, $\hat{m}$ obtained via (\ref{ic}) under four different penalty criteria all
perform
consistently in estimating the number of change points.
However, when $T$ is small,  the performance based on the
criterion $h(p)\phi(T)=(p+1)\log(T/\Delta_t)$ is efficient
and stable vis-\'{a}-vis the other criteria in each case. This suggests that
$h(p)\phi(T)=(p+1)\log(T/\Delta_t)$ is appropriate for this simulation study.

\subsection{Implementation on observed financial market data with discussion}
\noindent We apply the estimation methods (\ref{cp1}), (\ref{mle1}) and (\ref{ic}) to two
different financial market data series. For each series, we fit the process with
the following two different mean-reverting OU processes with one change point.
\begin{eqnarray}
\textit{d}X_t&=&
\begin{cases}
(\mu_1^{(1)}-a^{(1)}X_t)\textit{d}t+\sigma \textit{d}W_t,
& \mbox{if }0<t<s T \\
(\mu_1^{(2)}-a^{(2)}X_t)\textit{d}t+\sigma \textit{d}W_t,
& \mbox{if }s T<t< T,
\end{cases},\label{realdata1-1}\\
\textit{d}X_t&=&
\begin{cases}
(\mu_1^{(1)}+\sqrt{2}\cos(\frac{\pi t}{2\Delta_t})\mu_2^{(1)}-a^{(1)}X_t)\textit{d}t+\sigma \textit{d}W_t,
& \mbox{if }0<t<s T \\
(\mu_1^{(2)}+\sqrt{2}\cos(\frac{\pi t}{2\Delta_t})\mu_2^{(2)}-a^{(1)}X_t)\textit{d}t+\sigma \textit{d}W_t,
& \mbox{if }s T<t< T,
\end{cases}.\label{realdata2-1}
\end{eqnarray}
where $X_t$ is the target of interest (i.e., spot price, log-transformed spot price, daily return, etc.) at time $t$. The no-change point versions of \ref{realdata1-1} and \ref{realdata1-1} are\\
\begin{eqnarray}
\textit{d}X_t&=&
(\mu_1-aX_t)\textit{d}t+\sigma \textit{d}W_t, \quad \mbox{if }0<t< T,
\label{realdata1-2}\\
\textit{d}X_t&=&
\left( \mu_1+\sqrt{2}\cos\left( \frac{\pi t}{2\Delta_t} \right) \mu_2-a X_t \right)\textit{d}t+\sigma \textit{d}W_t, \quad \mbox{if }0<t< T.\label{realdata2-2}
\end{eqnarray}
\ \\
\noindent For (\ref{realdata1-1}) and (\ref{realdata2-1}),
we apply (\ref{cp1}) and (\ref{mle1})
to estimate the unknown change point, whilst for (\ref{realdata1-2}) and  (\ref{realdata2-2}),
we train the MLE of drift parameters based on the entire time period.
Then, we use (\ref{ic}) to test the existence of a change point.
In our formulation $\sigma$ is assumed to remain unchanged for the entire time period,
and thus it may estimated using the data's realised volatility, i.e.,
$\hat{\sigma}=(\sum_{t_i\in[0,T]}(X_{t_{i+1}}-X_{t_i})^2/T)^{1/2}$. Alternatively, one may fit the data series to the model, take the standard error of the residuals and then divide it by $\sqrt{\Delta_t}$ (see Smith (2010)). \\
\ \\
\noindent When $\sigma$ is time-dependent, EWMA and GARCH-type volatility estimation methods
may be used. However, under this situation the MLE and related asymptotic
properties established in Dehling, et al. (2010) as well as the asymptotic properties
derived in this paper may need re-evaluation as they are all based on the time-independent
assumption of the diffusion coefficient $\sigma$. In this paper, we consider $\sigma$ to be time-independent
and after the estimated results are obtained, the most suitable model is chosen as the one yielding the least
SIC value. In addition, we also report the log likelihood for comparison. To this end,
we let $\mathcal{LL}_0$ and $\mathcal{LL}_1$ denote the log likelihood under $H_0$ and $H_1$, respectively.
\\

\subsubsection{Application to West Texas Intermediate Cushing crude oil spot price}
\noindent The first data set is the time series of West Texas Intermediate Cushing crude oil spot prices, which
was first described in Subsection~\ref{wti}. Our interest in this data set is justified by the fact that,
as mentioned in Subsection~\ref{wti}, it is often being considered as a benchmark in oil pricing.
Furthermore, the modelling of and point-change detection in commodity prices are important in the
valuation of commodity derivatives and risk management of portfolios with large commodity holdings.
\\
\ \\
\noindent For our preliminary attempt, we set the WTI crude oil spot price to be our target of interest. To fit the model, we choose $T=4$ and so $\Delta_t=4/1008$.
With the data fitted to the two candidate models using the proposed methods, we display the results shown in Table~\ref{table7.1} and Figure~\ref{fig:realdata1.1}.\\

\begin{table}[!htbp]
\small \caption{Change-point detection results for the WTI crude oil prices} \centering
\begin{tabular}{|c|c|c|c|c|c|c|c|}
\hline
Model&  LSSE method & MLL method & $\hat{m}$ &$\mathcal{LL}_1$& $\mathcal{LL}_0$ & $\mathcal{IC}(m=1)$ & $\mathcal{IC} (m=0)$ \\
\hline
(\ref{realdata1-1})& 2014-09-26 & 2014-09-26 & 1 &11.89&0.72&3.87&12.39\\
\hline
(\ref{realdata2-1})& 2014-09-26 & 2014-09-26 & 1&9.96&3.99&9.96&12.76\\
\hline
\end{tabular}
\label{table7.1}
\end{table}

\begin{figure}[htbp]
\includegraphics[height=2.3in,width=6in]{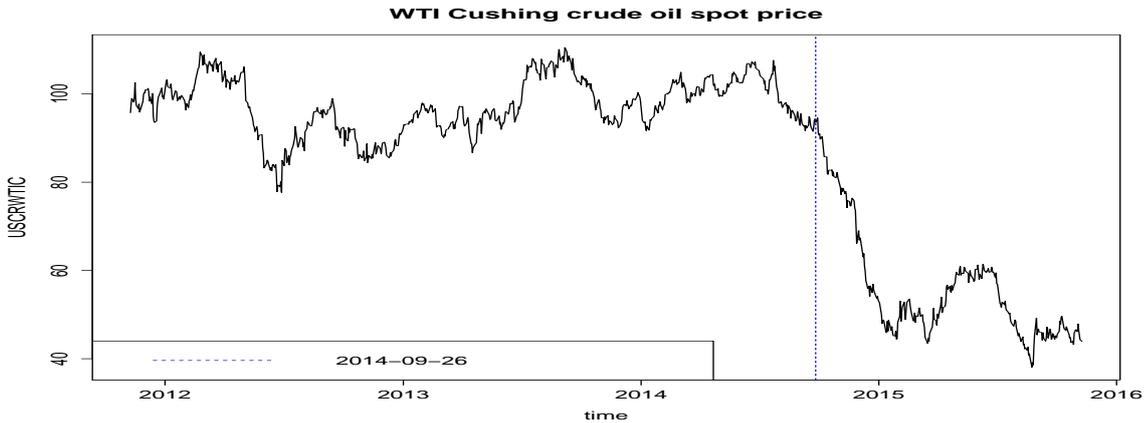}
\caption{\small WTI Cushing crude oil spot prices (09 November 2011--09 November 2015)}
\label{fig:realdata1.1}
\end{figure}
\noindent Looking at Table~\ref{table7.1} and Figure~\ref{fig:realdata1.1}, one can see that the value
of the log likelihood increases as the number of the coefficients in
the model increases. Further, both candidate models confirm the existence of a change point ($\hat{m}=1$)
during this time period. Under both models, the detected change point is the same (i.e., 26 September 26 2014)
for the proposed methods. On the other hand, despite the log-likelihood comparison showing that
the periodic mean-reverting model (\ref{realdata2-1}) produces higher log likelihood than the classical OU process with change point (\ref{realdata1-1}), the SIC, nonetheless, suggests that (\ref{realdata1-1}) is more appropriate than (\ref{realdata2-1}) for this data series.\\
\ \\
As suggested in Chen (2010), we also analyse the log-transformed WTI crude oil spot prices. We examine the
log-transformed WTI crude oil spot prices as our target of interest and re-apply the proposed techniques.
The results are shown in Table~\ref{table7.2} and Figure~\ref{fig:realdata1.2}. It is worth noting that the
detected change points for this log-transformed spot prices are still the same (i.e., 26 September 2014), as well as the behavior of log-likelihood (increase as the number of coefficients in the model increases); although, this time around, the results based on (\ref{realdata2-1}) fail to pass the test on the existence of a change point ($\hat{m}=0$). Judging from the SIC numbers, model (\ref{realdata1-1}) is again better than model (\ref{realdata2-1}). This suggests that (\ref{realdata1-1}), with the change point occurring on 26 September 2014, is more suitable for both the WTI Cushing crude oil price data and its log-transformed series.
\ \\
\begin{table}[!htbp]
\small \caption{Change point detection for the log-transformed WTI crude oil prices} \centering
\begin{tabular}{|c|c|c|c|c|c|c|c|}
\hline
Model&  LSSE method & MLL method & $\hat{m}$ & $\mathcal{LL}_1$&$\mathcal{LL}_0$ &$\mathcal{IC}(m=1)$& $\mathcal{IC}(m=0)$  \\
\hline
(\ref{realdata1-1})& 2014-09-26 & 2014-09-26 & 1 &8.15&0.82&11.37&12.18\\
\hline
(\ref{realdata2-1})& 2014-09-26 & 2014-09-26 & 0&13.68&4.26&14.12&12.20 \\
\hline
\end{tabular}
\label{table7.2}
\end{table}

\begin{figure}[htbp]
\includegraphics[height=2.3in,width=6in]{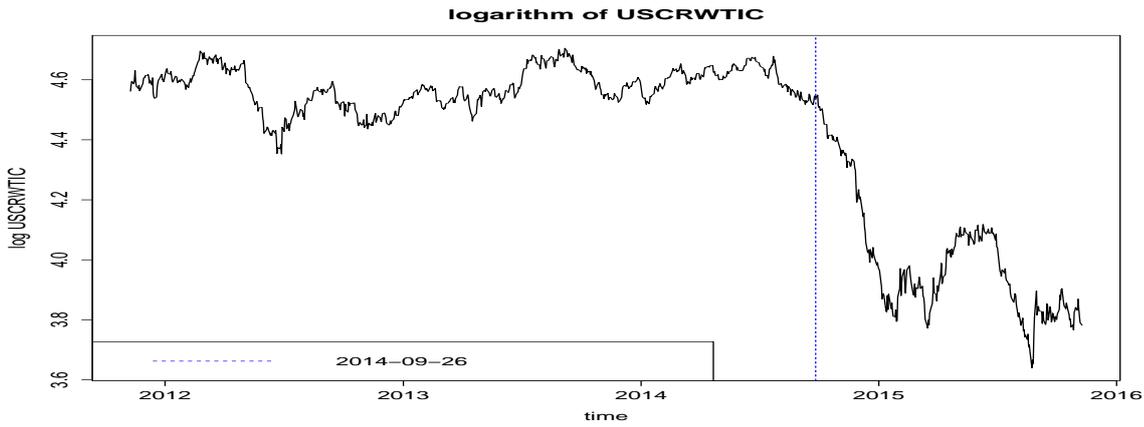}
\caption{\small Log-transformed WTI Cushing crude oil spot prices (09 November 2011--09 November 2015)}
\label{fig:realdata1.2}
\end{figure}
\ \\
\noindent  Based on our \textit{primary statistics of interest}, defined here as the estimated change point $\hat{\tau}$ and the associated MLE $\hat{\theta}^{(1)}$ and $\hat{\theta}^{(2)}$) obtained from the proposed methods, we use model (\ref{realdata1-1}) to generate a simulated crude oil price data set and log-transformed crude oil price data series. Additionally, we also generate a simulated data series based on (\ref{realdata1-2}) for comparison. The two simulated series are presented side-by-side in Figure~\ref{fig:simwti1}. \\
\begin{figure}[htpb]
\includegraphics[height=2.2in,width=3in]{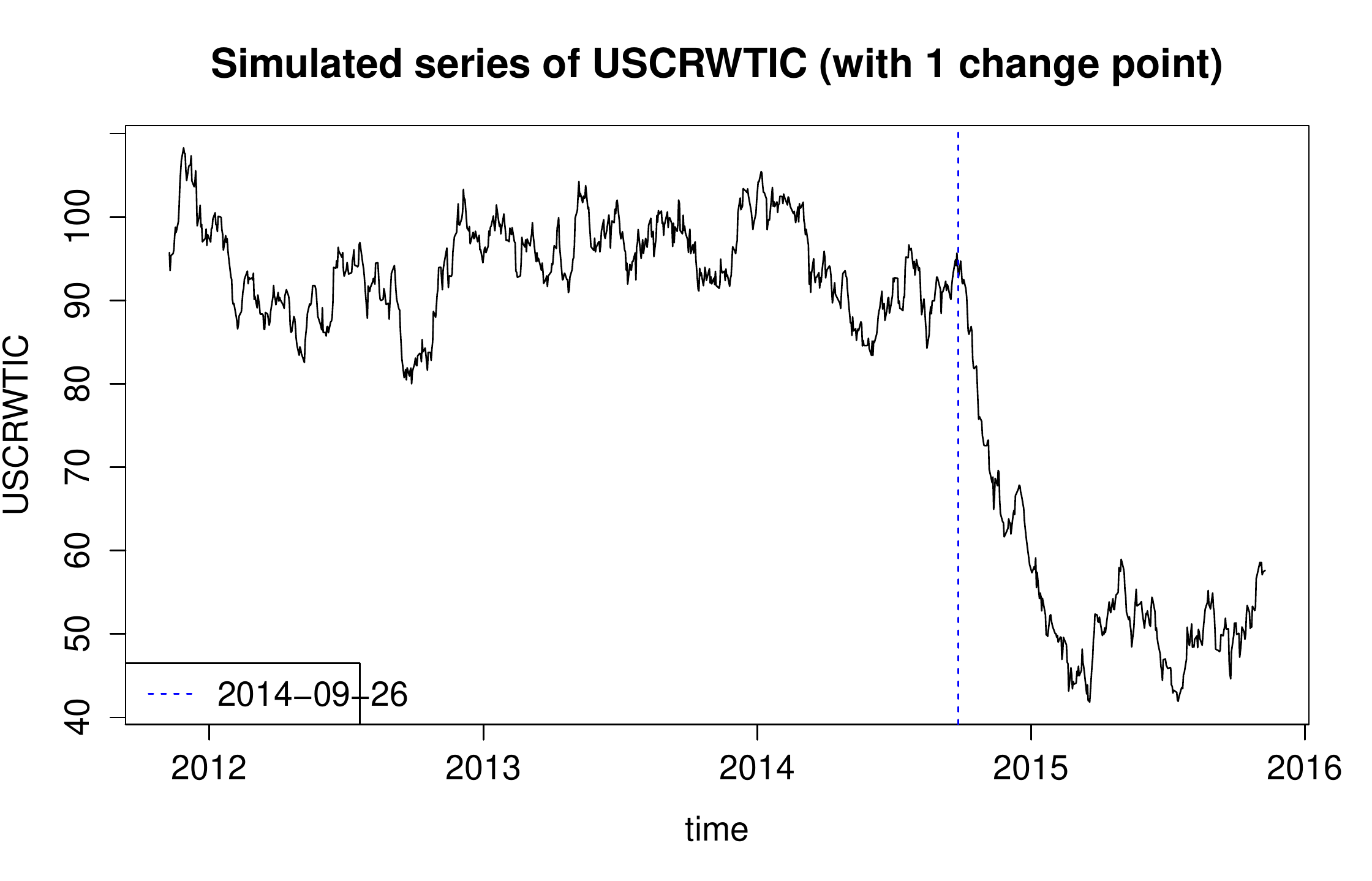}
\includegraphics[height=2.2in,width=3in]{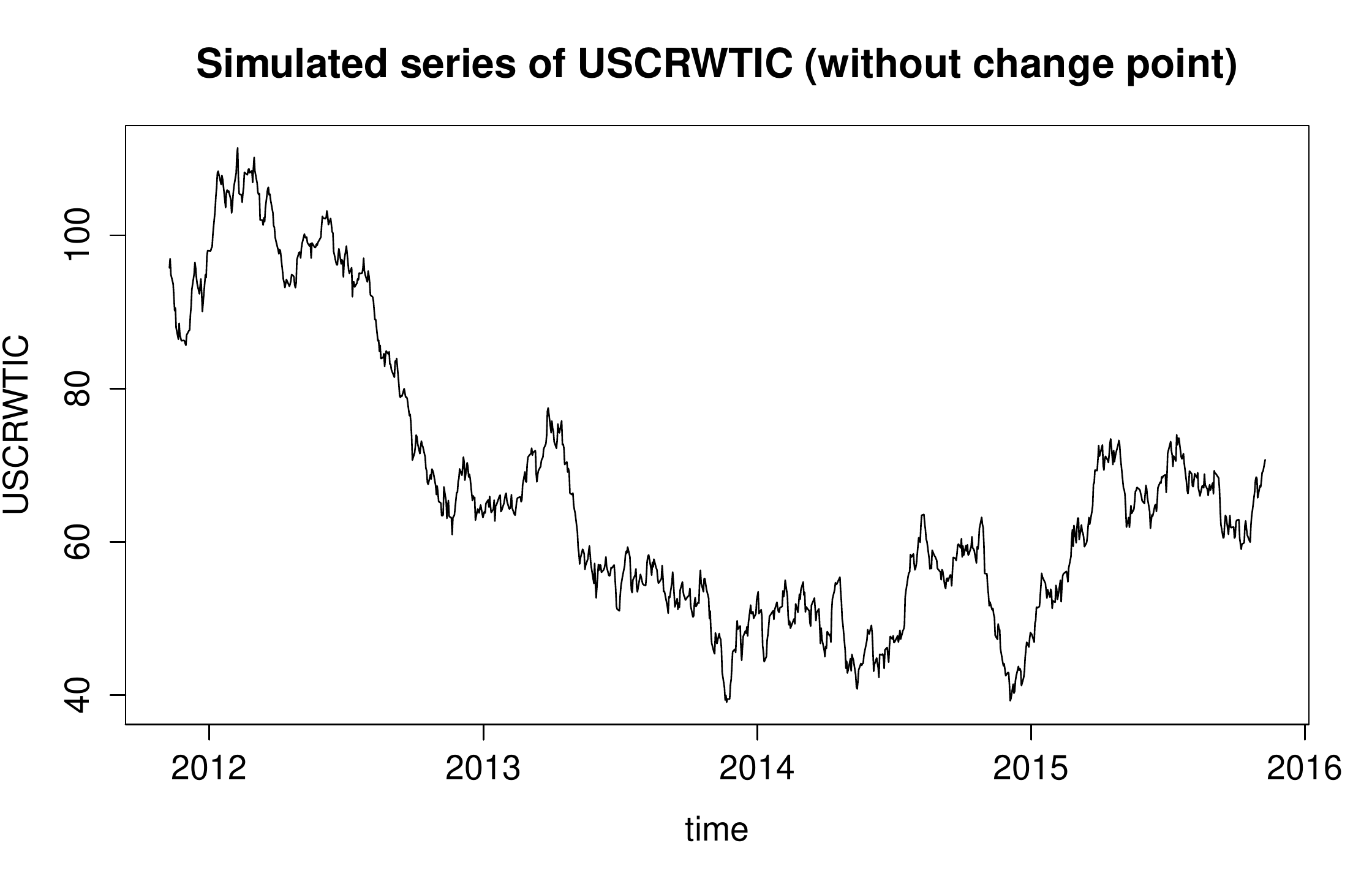}
\caption{\small Simulated series of WTI Cushing crude oil spot price (09 November 2011--09 November 2015)}
\label{fig:simwti1}
\end{figure}

\begin{figure}[htpb]
\includegraphics[height=2.3in,width=3in]{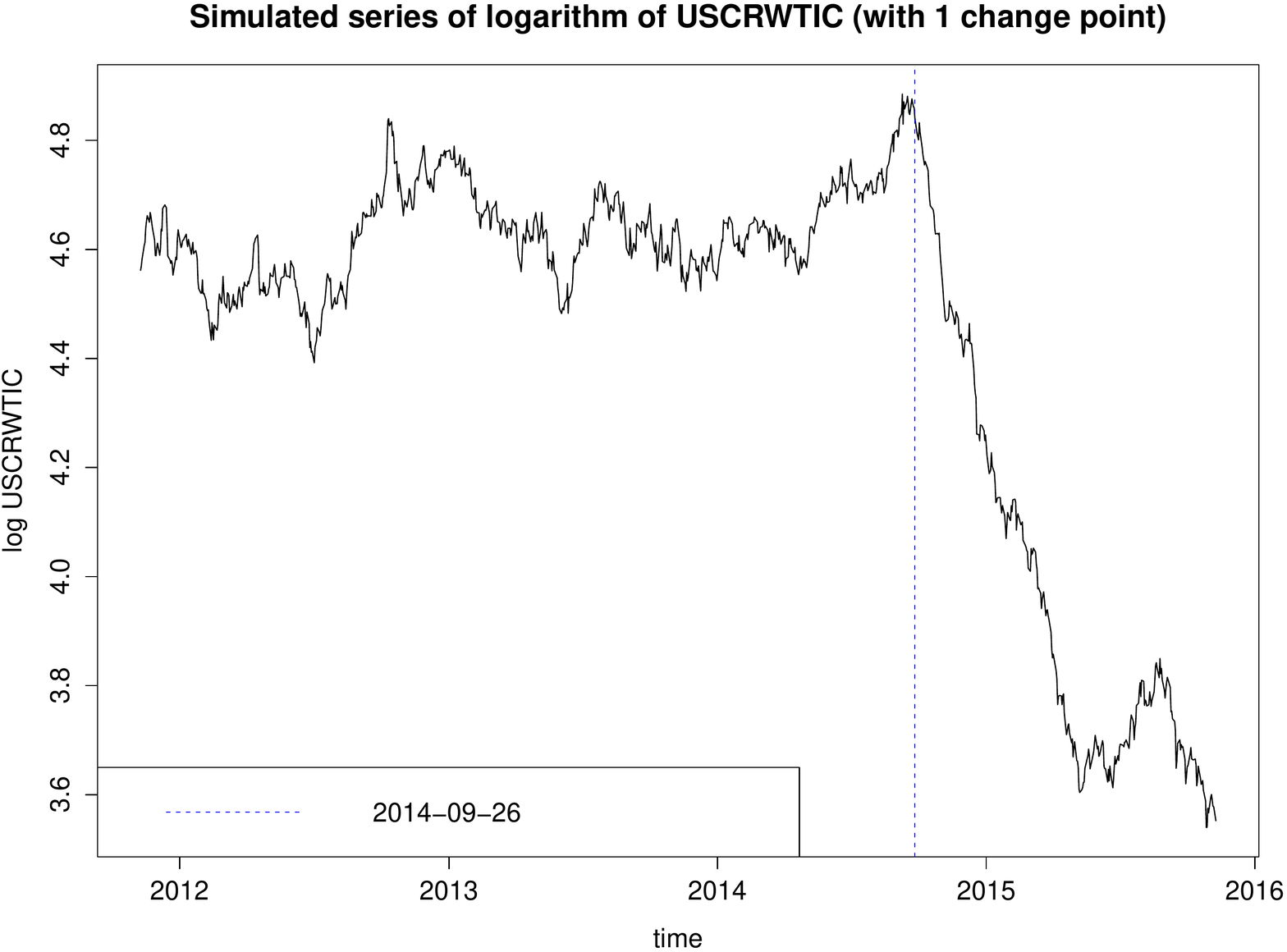}
\includegraphics[height=2.3in,width=3in]{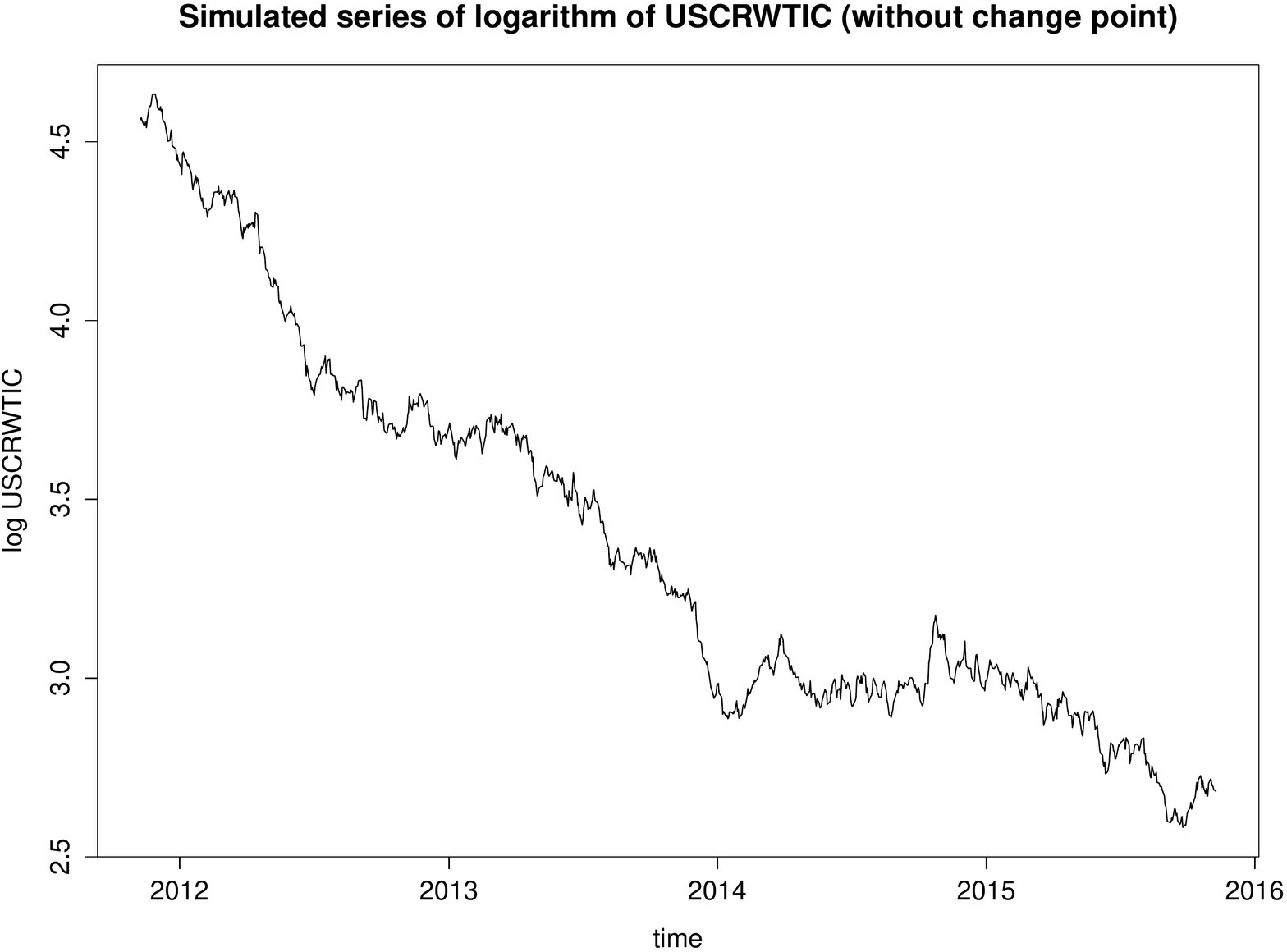}
\caption{\small Simulated series of log-transformed WTI Cushing crude oil spot price (09 November 2011--09 November 2015)}
\label{fig:simwti2}
\end{figure}
\ \\
\noindent By comparing the simulated series in Figures~\ref{fig:simwti1}
and \ref{fig:simwti2} to the original series (shown in  Figures~\ref{fig:realdata1.1}
and \ref{fig:realdata1.2}), we see that the simulated series generated by (\ref{realdata1-1}),
with the change point occurring on 26 September 2014, is closer to the original series than
the one generated by (\ref{realdata1-2}). This confirms the efficiency of the proposed methods for this series.

\subsubsection{Application to XAU currency}
\noindent We apply as well the proposed methods to the XAU currency data,
which was described in Subsection~\ref{xau}. This data set refers to the time-series of
prices, in US dollars, for a troy ounce of gold. Our interest in this kind of data set
is motivated by the pricing of currency swaps, futures and options. \\
\ \\
\noindent Figure~\ref{fig:realdata2.1} shows that the trend of XAU currency series changed over time. As in the previous study, we take both the XAU currency and its logarithm as our targets of interest. To fit the data with the candidate models, we choose $T=15$ and $\Delta_t=15/3913$. The results for the XAU currency are depicted in Table~\ref{table8.1} and Figure~\ref{fig:realdata2.1}, whilst the results for the log-transformed XAU currency are shown in Table~\ref{table8.2} and Figure~\ref{fig:realdata2.2}.\\
\ \\
\begin{table}[!htbp]
\small \caption{Change-point detection for XAU currency (15 years)} \centering
\begin{tabular}{|c|c|c|c|c|c|c|c|}
\hline
Model&  LSSE method & MLL method & $\hat{m}$ & $\mathcal{LL}_1$&$\mathcal{LL}_0$ &$\mathcal{IC}(m=1)$& $\mathcal{IC}(m=0)$ \\
\hline
(\ref{realdata1-1})& 2013-04-08 & 2013-04-08 & 1 &12.62&1.43&7.83 &13.67\\
\hline
(\ref{realdata2-1})& 2013-04-08 &2013-04-08 & 0&13.31&1.81&23.01&21.18\\
\hline
\end{tabular}
\label{table8.1}
\end{table}
\begin{figure}[htbp]
\includegraphics[height=2.2in,width=6in]{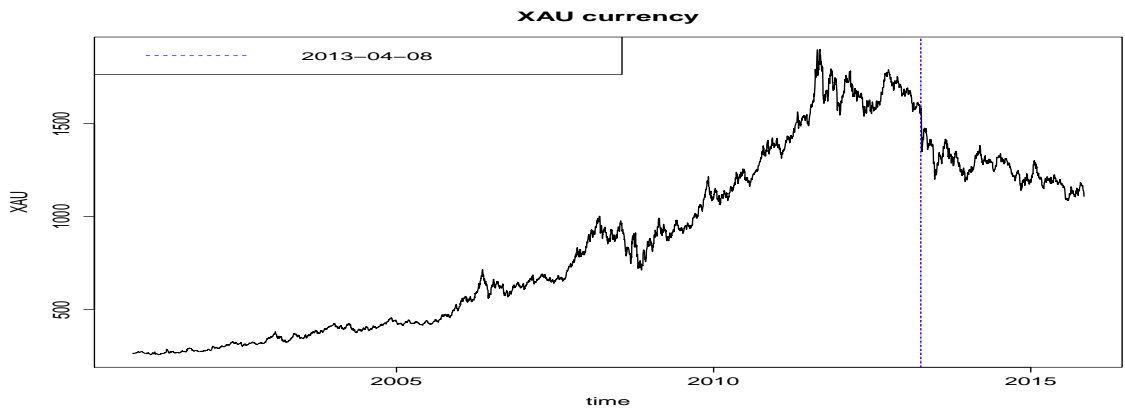}
\caption{\small Evolution of the XAU currency (03 November 2000--04 November 2015)}
\label{fig:realdata2.1}
\end{figure}

\begin{table}[!htbp]
\small \caption{Change point detection for the log-transformed XAU currency (15 years)} \centering
\begin{tabular}{|c|c|c|c|c|c|c|c|}
\hline
Model&  LSSE method & MLL method & $\hat{m}$ & $\mathcal{LL}_1$&$\mathcal{LL}_0$ & $\mathcal{IC}(m=1)$& $\mathcal{IC}(m=0)$\\
\hline
(\ref{realdata1-1})& 2011-08-19 & 2011-08-19 & 0 &7.08&3.25 &18.92&10.04\\
\hline
(\ref{realdata2-1})& 2011-08-19 &2011-08-19 & 0&7.41&3.46&34.80&17.89\\
\hline
\end{tabular}
\label{table8.2}
\end{table}
\begin{figure}[htbp]
\includegraphics[height=2.2in,width=6in]{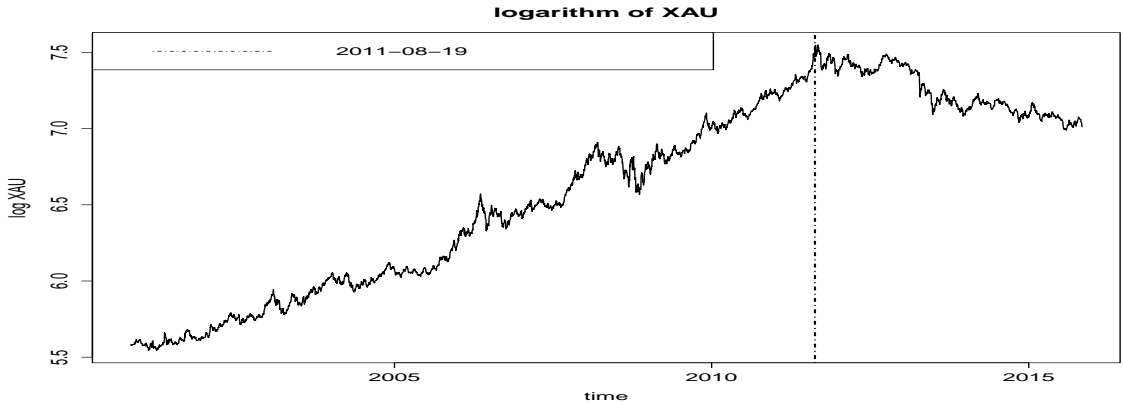}
\caption{\small Log-transformed XAU currency (03 November 2000--04 November 2015)}
\label{fig:realdata2.2}
\end{figure}
\noindent For the original XAU currency, one can see that log-likelihood increases as the number of coefficients in the model increases. Further, (\ref{realdata1-1}) successfully detects one change point on 08 April 2013, whilst (\ref{realdata2-1}) fails to detect the change point ($\hat{m}=0$). However, the SIC comparison shows that the classical OU process (SIC$=7.83$) is more appropriate than the periodic mean-reverting model (\ref{realdata2-1}) (SIC$=21.18$); we, therefore, select (\ref{realdata1-1}) with the change point on 08 April 2013
as the suitable model for this series.\\
\ \\
\noindent For the log-transformed XAU currency, Figure~\ref{fig:realdata2.2} illustrates
visually that the series is smoother than the original series (see Figure~\ref{fig:realdata2.1}),
and the potential change in the series becomes less clear.
Applying the proposed methods, both (\ref{realdata1-1}) and (\ref{realdata2-1})
detect the same change point on 19 August 2011, which is the time when the log-transformed XAU
currency almost reaches the highest value. Although the comparison of log-likelihood
functions indicates that  imposing a change point in the model
can produce higher log-likelihood and (\ref{realdata1-1}) is still more suitable than
(\ref{realdata2-1}) in terms of the SIC, both models fail to pass the test for the existence
of change point ($\hat{m}=0$). These results suggest that for this log-transformed data series,
imposing a change point into the model is not as efficient as compared to the original XAU currency series. \\
\ \\
\noindent Similar to the previous study,
we also generate, employing the estimated values of the primary statistics of interest, some simulated series based on (\ref{realdata1-1}) and (\ref{realdata2-1}) for both XAU currency and its log transform. The results are given in Figure~\ref{fig:simxau15.1} for the original XAU currency and \ref{fig:simxau15.2} for the log-transformed XAU currency, respectively.\\
 \ \\
\begin{figure}[htpb]
\includegraphics[height=2.2in,width=3in]{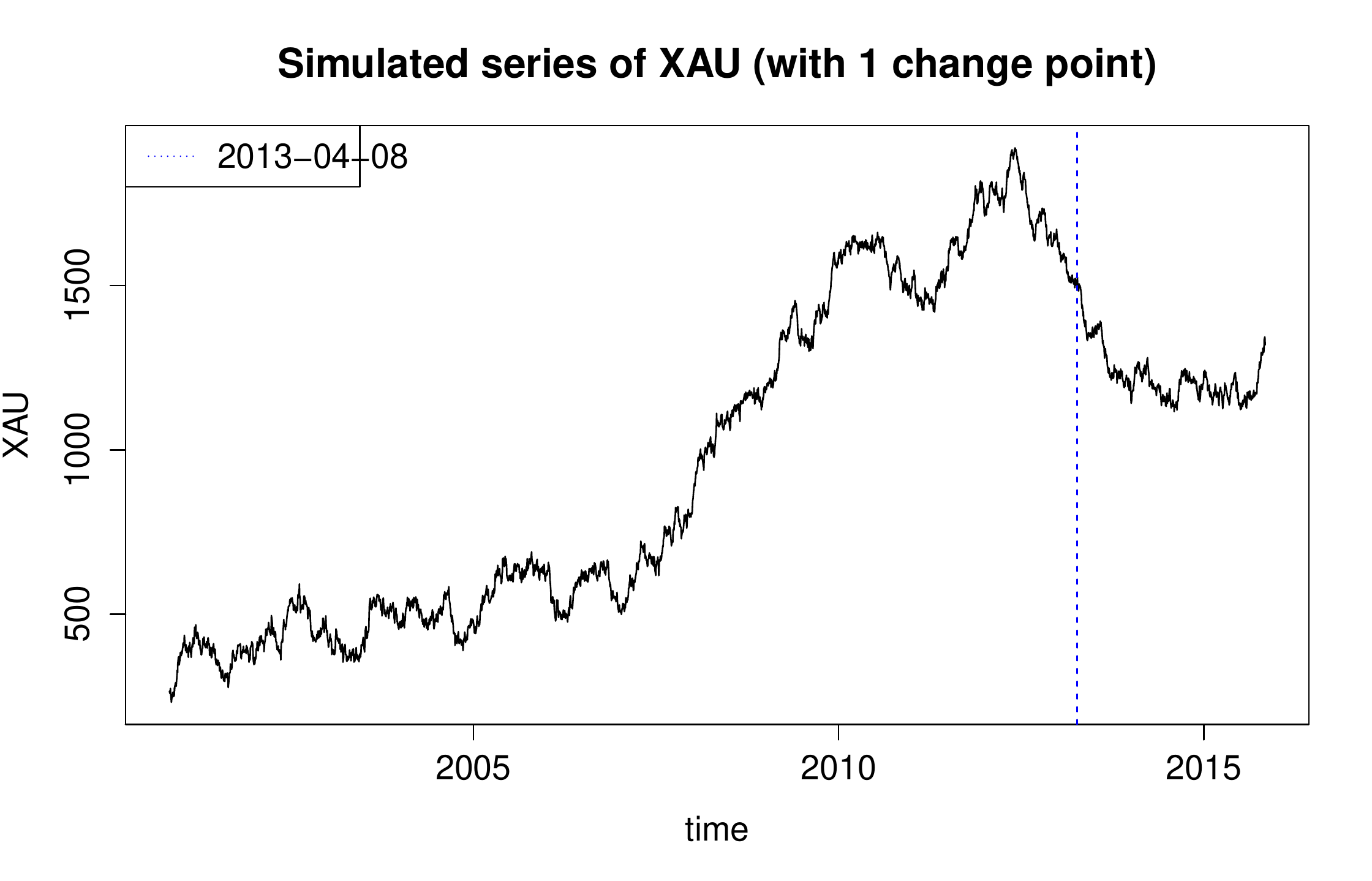}
\includegraphics[height=2.2in,width=3in]{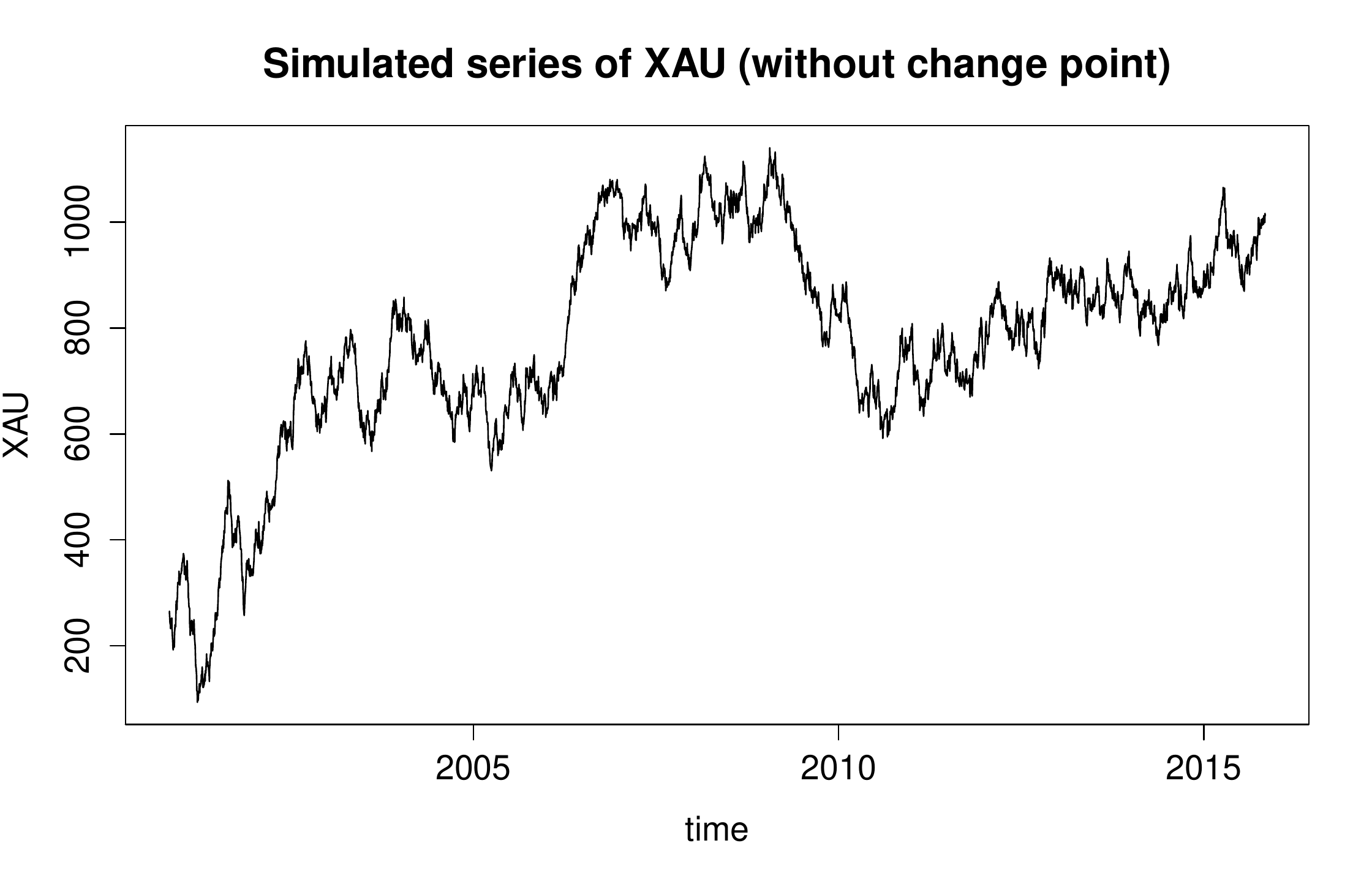}
\caption{\small Simulated series of XAU currency (03 November 2000--04 November 2015)}
\label{fig:simxau15.1}
\end{figure}
\begin{figure}[htpb]
\includegraphics[height=2.2in,width=3in]{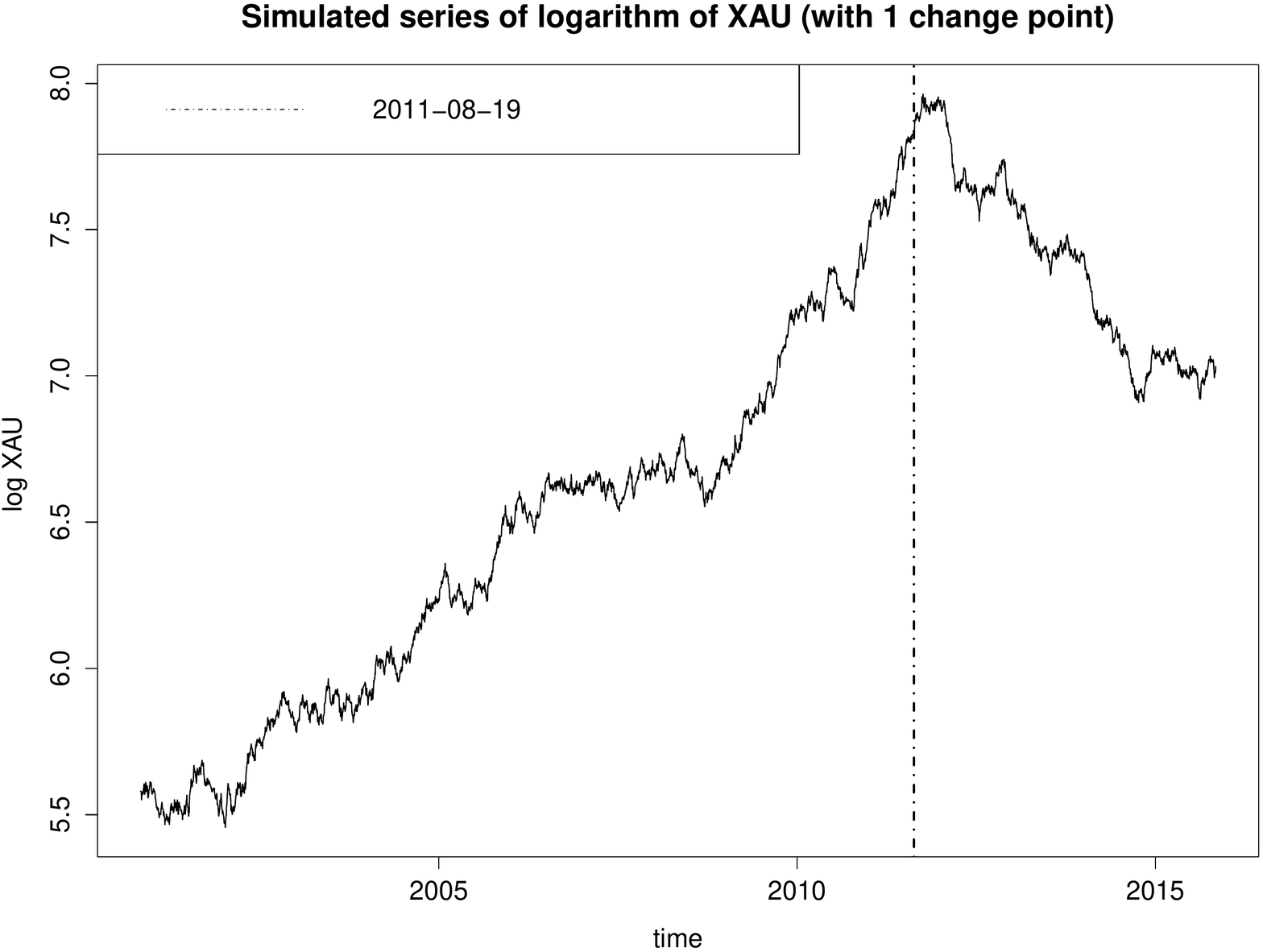}
\includegraphics[height=2.2in,width=3in]{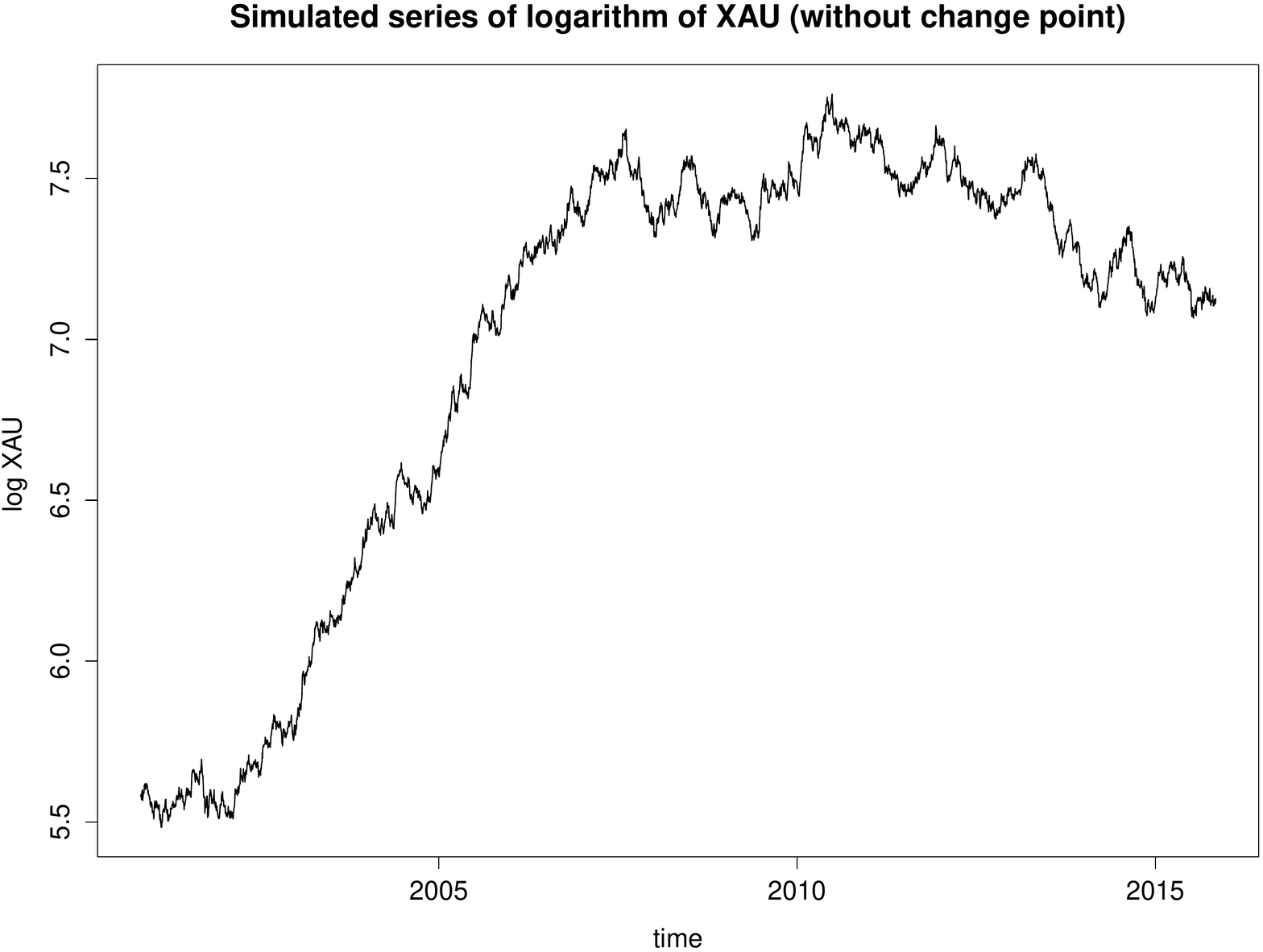}
\caption{\small Simulated series of log-transformed XAU currency (03 November 2000--04 November 2015)}
\label{fig:simxau15.2}
\end{figure}
\noindent For the original XAU currency, the comparison shows that the simulated series generated by (\ref{realdata1-1}) with the change point on 08 April 2013 is closer to the original series, especially around the location where the change point happens; this is in contrast to the model without a change point. This result confirms the efficiency of employing the change-point process (\ref{realdata1-1}) in improving the estimation of this series. However, the change in the log-transformed series is not clear as the difference between the two simulated series becomes small; they are both close to the original log-transformed series. This means that there may be no need to impose a change point for this log-transformed series; this is because the improvement is not as significant to the one obtained in the original series.\\
\ \\
\noindent To probe matters on this series further, we change the starting date of the data series to 04 November 2008. This reduces the time period from 15 years to 7 years, and one can see from Figure~\ref{fig:realdata3.1} that the XAU currency levels in this time period are all higher than \$700. We re-apply the methods to the XAU currency data and its log-transformed series. Table~\ref{table9.1} and Figure~\ref{fig:realdata3.1} show the results for the original XAU currency, and Table~\ref{table9.2} and Figure~\ref{fig:realdata3.2} display
the results
for the log-transformed XAU currency. \\
\\
\begin{table}[htbp]
\small \caption{Change-point detection for XAU currency (7 years)} \centering
\begin{tabular}{|c|c|c|c|c|c|c|c|}
\hline
Model&  LSSE method & MLL method & $\hat{m}$ &$\mathcal{LL}_1$&$\mathcal{LL}_0$ &$\mathcal{IC}(m=1)$& $\mathcal{IC}(m=0)$\\
\hline
(\ref{realdata1-1})& 2013-04-08 & 2013-04-08 & 0 &9.26&2.28&11.50&10.47\\
\hline
(\ref{realdata2-1})& 2013-04-08 &2013-04-08 & 0&9.62&2.56&25.80&17.40\\
\hline
\end{tabular}
\label{table9.1}
\end{table}
\begin{figure}[htbp]
\includegraphics[height=3.2in,width=6in]{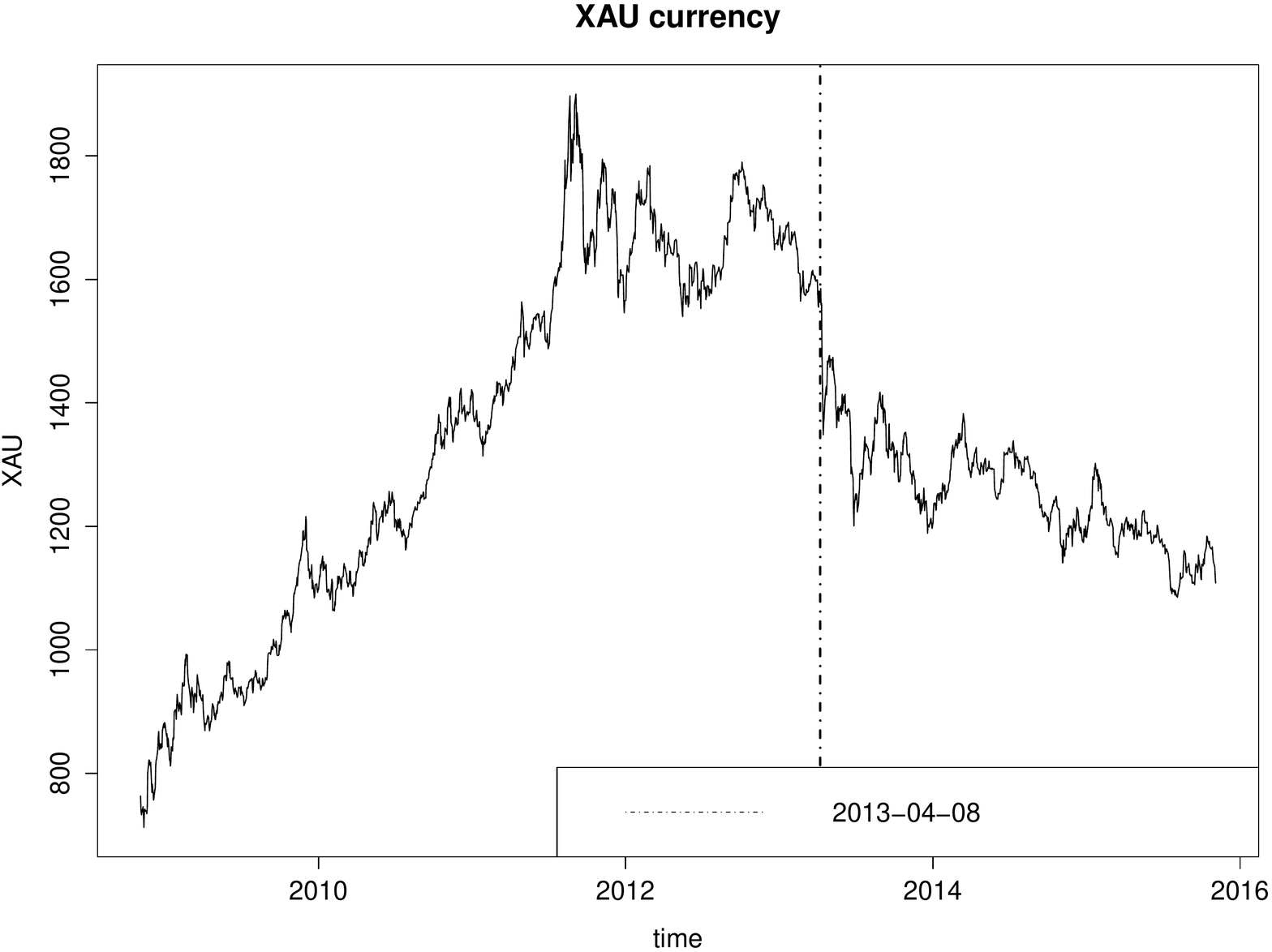}
\caption{\small XAU currency (04 November 2008--04 November 2015)}
\label{fig:realdata3.1}
\end{figure}
\ \\
\begin{table}[htbp]
\small \caption{Change-point detection for the log-transformed XAU currency (7 years)} \centering
\begin{tabular}{|c|c|c|c|c|c|c|c|}
\hline
Model&  LSSE method & MLL method & $\hat{m}$ &$\mathcal{LL}_1$&$\mathcal{LL}_0$& $\mathcal{IC}(m=1)$& $\mathcal{IC}(m=0)$\\
\hline
(\ref{realdata1-1})&2011-08-19 & 2011-08-19  & 0 &6.06&2.95&17.90&9.11\\
\hline
(\ref{realdata2-1})& 2009-02-23 &2009-02-23 & 0&7.06&3.31&30.92&15.89\\
\hline
\end{tabular}
\label{table9.2}
\end{table}
\begin{figure}[htbp]
\includegraphics[height=3.2in,width=6in]{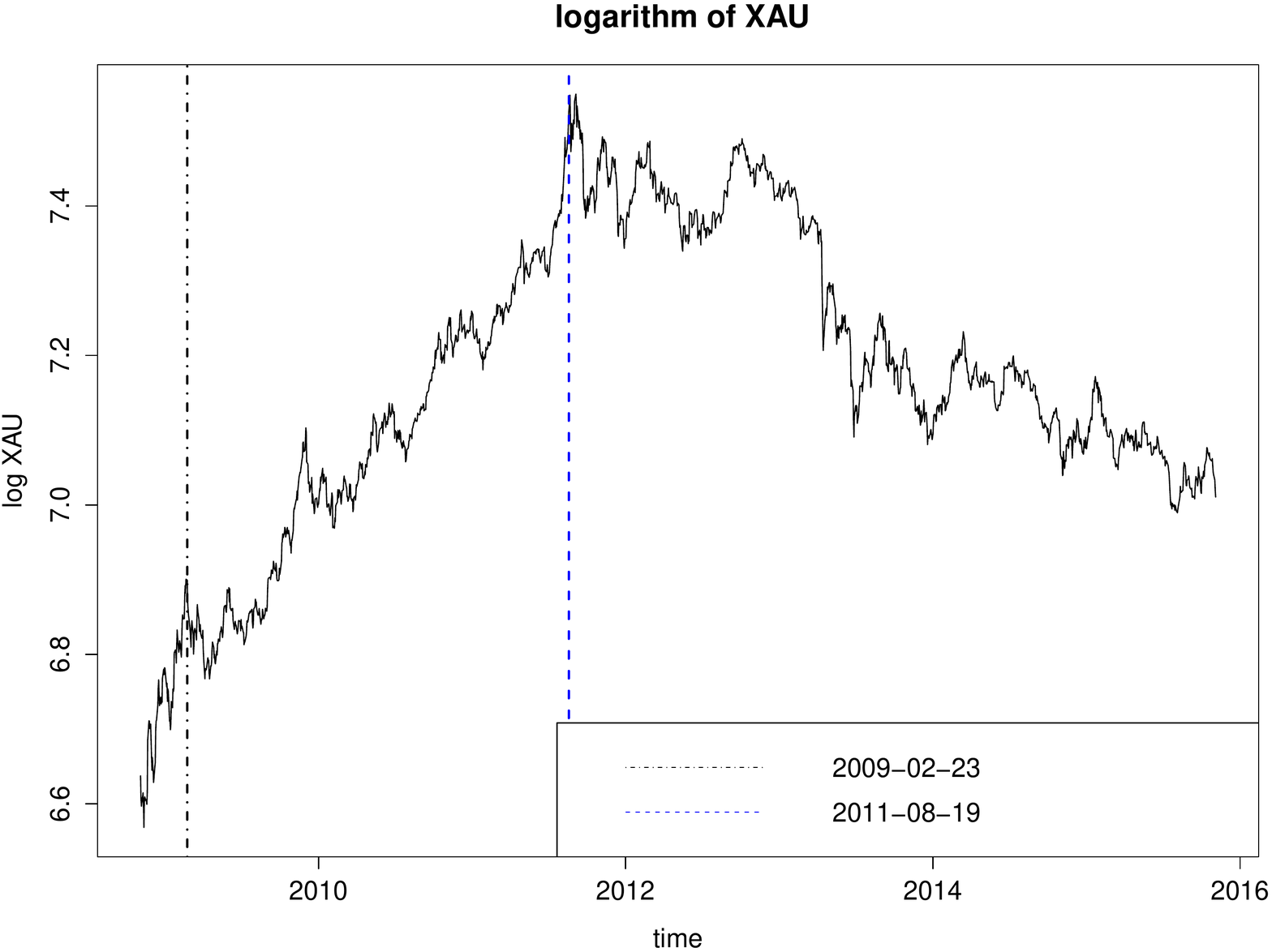}
\caption{\small XAU currency (04 November 2008--04 November 2015)}
\label{fig:realdata3.2}
\end{figure}
\begin{figure}[htpb]
\includegraphics[height=2.2in,width=3in]{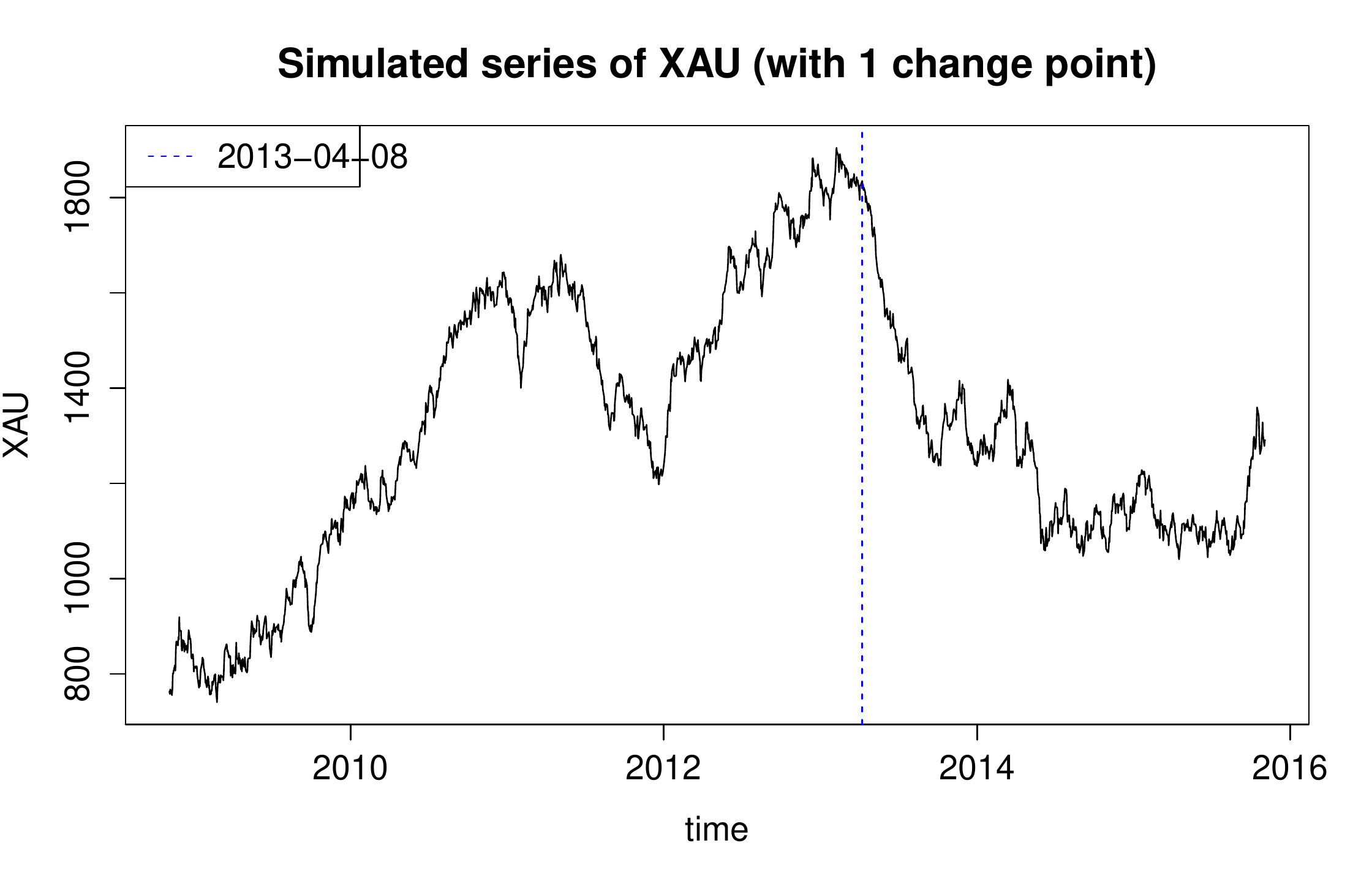}
\includegraphics[height=2.2in,width=3in]{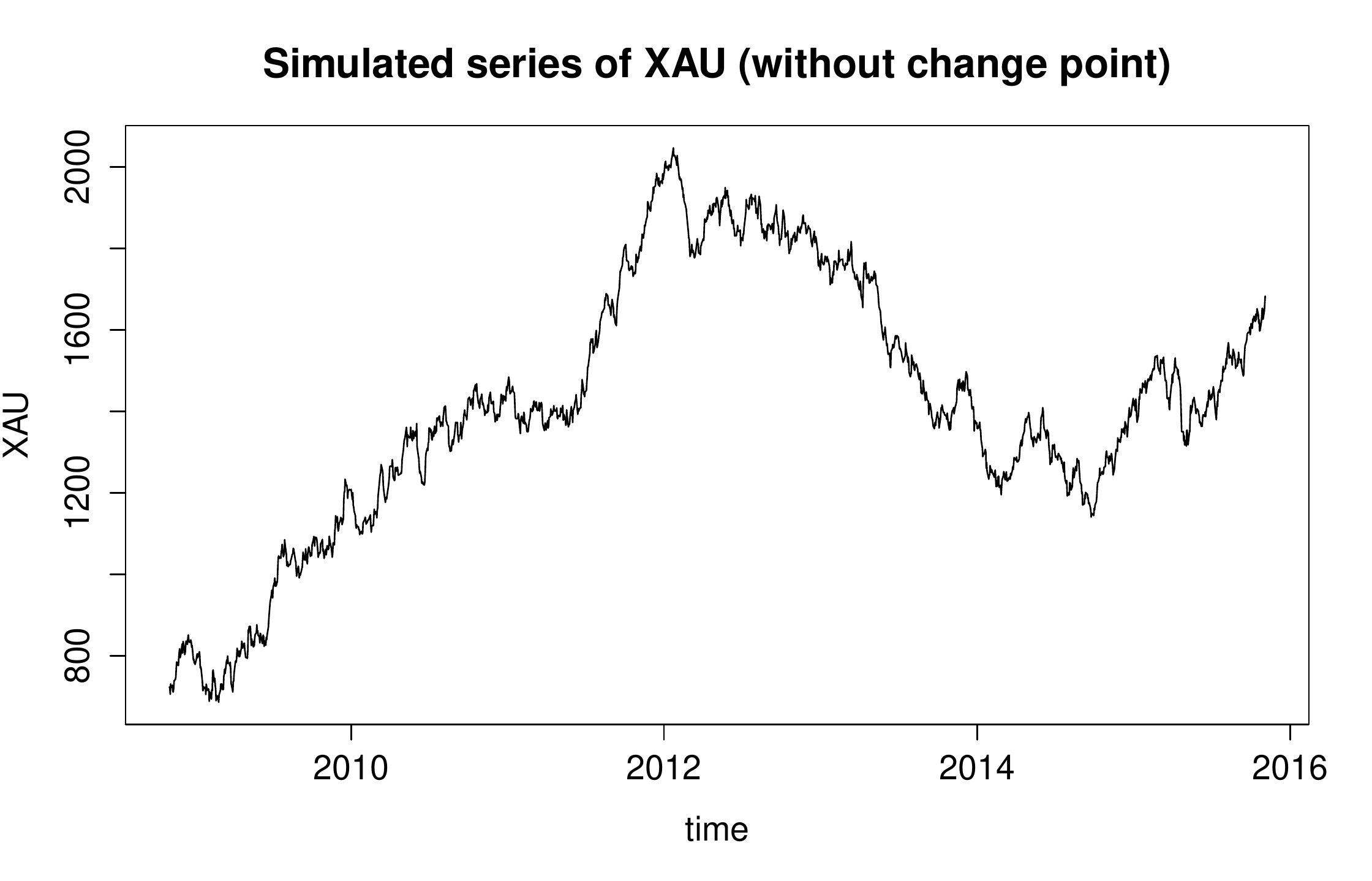}
\caption{\small Simulated series of XAU currency (04 November 2008--04 November 2015)}
\label{fig:simXAU7.1}
\end{figure}
\begin{figure}[htpb]
\includegraphics[height=2.2in,width=3in]{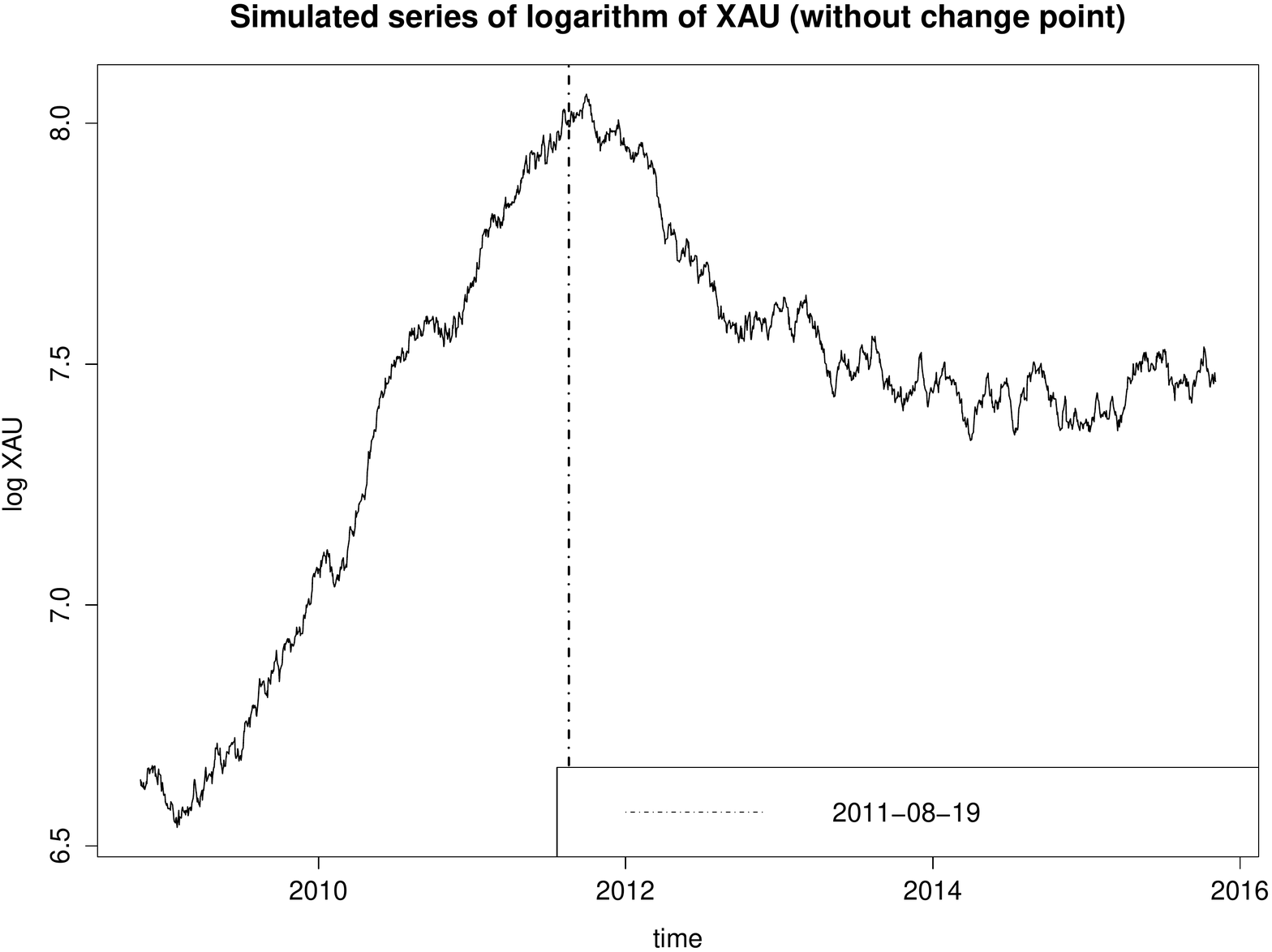}
\includegraphics[height=2.2in,width=3in]{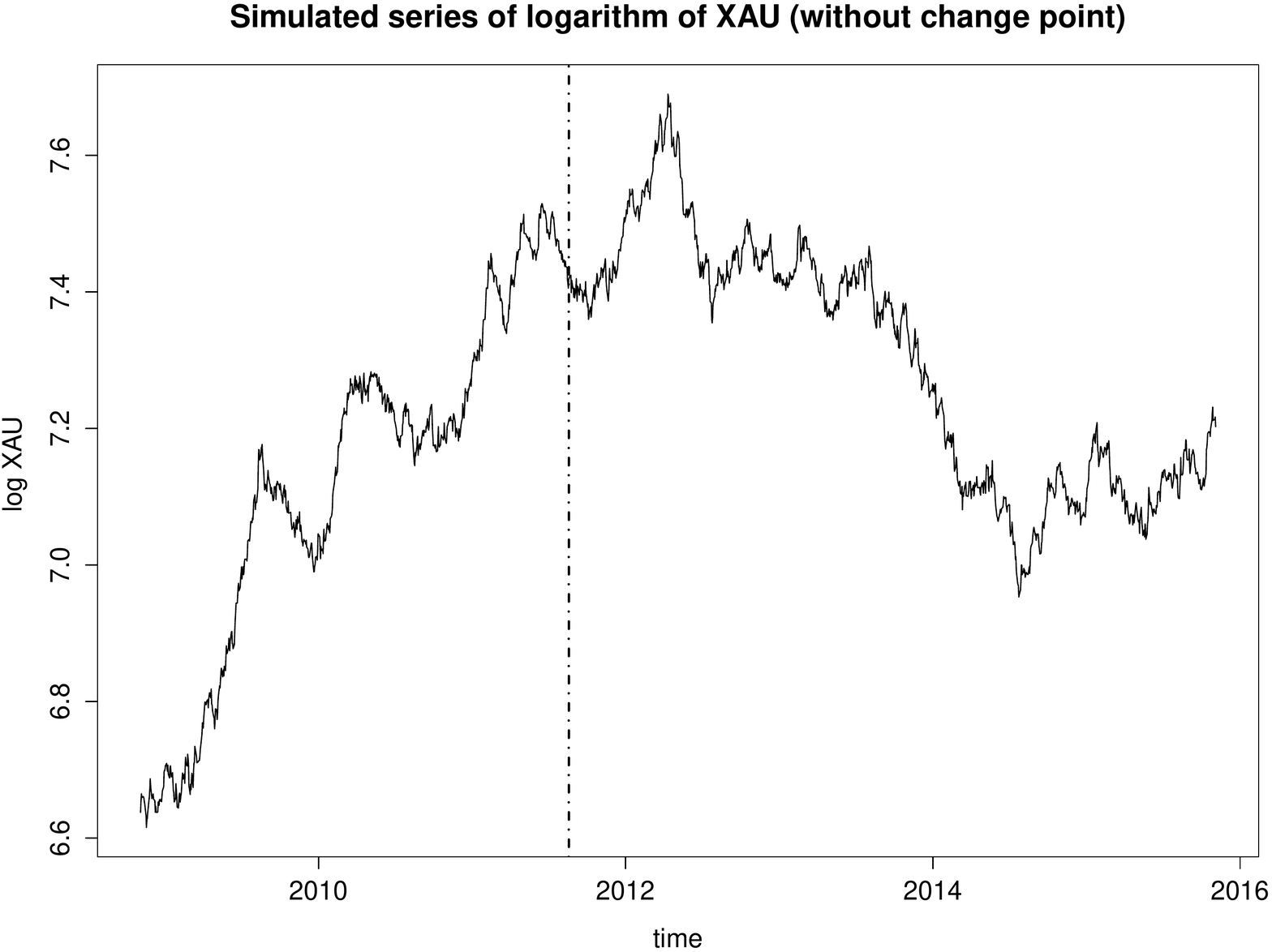}
\caption{\small Simulated series of log-transformed XAU currency (04 November 2008--04 November 2015)}
\label{fig:simXAU7.2}
\end{figure}
\ \\
\noindent As one can see from the results, for the original XAU currency,
the detected change point based on (\ref{cp1}) and (\ref{mle1}) in both
models are still the same as in the 15 years' time period, suggesting the
consistency of the proposed methods. Further, the comparisons
of log-likelihood suggest that increasing the number of coefficients in
the model are useful in producing higher log-likelihood. This finding is
consistent with the previous study. However, after applying (\ref{ic}) to test
the existence of change point, we find that both models fail to pass the test.
This tells us that using (\ref{realdata1-2}) would be enough for modelling the
series for this time period.\\
\ \\
On the other hand, after applying the log transformation to the XAU currency,
the SIC reveals that the model (\ref{realdata1-1}) is still more suitable
than (\ref{realdata2-1}) for this case. Moreover, as one can see from the
results shown in Table~\ref{table9.2}, together with the simulated series provided
in figure~\ref{fig:simXAU7.1} and \ref{fig:simXAU7.2}, similar outcome as in the
case of log-transformed XAU currency with 15 years' time period again suggests that
for this log-transformed data series, the improvement from employing a model with a
change point is not as significant as the one in the original XAU currency series
with a 15-years time period.\\
\ \\
\section{Conclusion}
\noindent The theoretical results and illustrative examples,
involving both simulated and observed data,
of this paper were motivated by the
practical considerations of point-change detection. Such motivations
are driven by applications centred on
confirming significant changes in the time-series data so that proper
responses and policies could be put in place as in the case of
interest rate-setting behaviour of monetary regulatory authority,
design of trading strategies in hedging and speculation, appropriate
calibration of models in financial product valuation, amongst others.
Our main contribution highlighted the development of
MLL- and LSSE-based methods in showing the existence or non-existence blue
of a change point and
in determining the unknown location whenever such a change point exists. We established
the equivalence of the estimators for the change point under the two methods.
In addition,
we provided conditions so that our estimators for the change point location are
asymptotically consistent, which in turn aided the design of an efficient
implementation algorithm. Our work certainly gives impetus for the investigation and
development of methodology suited in tackling the
multiple-change point problem, which is what we commonly encounter
in practice. Our research results aim to lay down the groundwork so that
further modifications could be made and new ideas could be adopted
in making further progress in point-change detection involving time
series models with more complex and elaborate dynamics and stylised features.\\
\ \\
\ \\
\appendix
\renewcommand{\thesection}{Appendix \Alph{section}}
\section{Proofs of Propositions~\ref{prn13}, \ref{prn14}, \ref{prn-mle1}
and \ref{prn-mle2} \label{AppendixA}}
\subsection{Preliminaries for the proofs of Propositions~\ref{prn13} and \ref{prn14}}
\noindent In this section, we let $||A||$ denote the Euclidean norm for
a vector $A$ and $||B||=\sqrt{\mathrm{trace}(B'B)}$ for a matrix $B$.
Further, let $\hat{u}_i$ be the residual of the $i$th element in (\ref{ou2}) based on the
estimated change point $\hat{\tau}$. That is,
$\hat{u}_i=Y_i-Z_i\hat{\theta}^{(j)}=Z_i{\theta}^{(j)}-Z_i\hat{\theta}^{(j)}+u_i$ ,
$j=1$ for $0<i \leq \hat{\tau}$ and $\tau=\hat{\tau}$ estimated by (\ref{cp1}).
Similarly, let $\hat{u}_i^0$ be the residual of the $i$th element in  (\ref{ou2})
based on the exact change point ${\tau}^0$ and the associated MLE of $\theta^{(j)}$
denoted by $\theta^{(j,0)}$, $j=1,2$. Without loss of generality, we assume that $\hat{\tau}>\tau^0$.\\
\ \\
\noindent The proofs of Propositions~\ref{prn13} and \ref{prn14} both rely on
investigating the behaviour of
\begin{equation}\label{prn1-eq1}\phi\left(\sum_{t_i\in[0,T]}
\hat{u}_i'\hat{u}_i-\sum_{t_i\in[0,T]}\hat{u}_i^{0\prime}\hat{u}_i^{0}\right).\end{equation}

\noindent We divide the time period $[0,T]$ involved in (\ref{prn1-eq1}) into
3 sub-intervals: $[0,\tau^0]$, $(\tau^0,\hat{\tau}]$ and $(\hat{\tau},T]$. Then,
by substituting the expressions for $\hat{u}_i$ and $\hat{u}_i^0$ into (\ref{prn1-eq1})
and applying the identity $(a+b)^2=a^2+2ab+b^2$ to expand the quadric error terms, we have
\begin{eqnarray}
(\ref{prn1-eq1})&=&\phi\left(\sum_{t_i\in[0,\tau^0]}(Z_i({\theta}^{(1)}-\hat{\theta}^{(1)}))^2
+\sum_{t_i\in (\tau^0,\hat{\tau}]}(Z_i({\theta}^{(2)}-\hat{\theta}^{(1)}))^2
+\sum_{t_i\in (\hat{\tau},T]}(Z_i({\theta}^{(2)}-\hat{\theta}^{(2)}))^2\right)\label{prn1-eq2}\\
&&-\phi\sum_{t_i\in[0,\tau^0]}(Z_i{\theta}^{(1)}-Z_i\hat{\theta}^{(1,0)})^2-\phi\sum_{t_i\in(\tau^0,T]}
(Z_i{\theta}^{(2)}-Z_i\hat{\theta}^{(2,0)})^2\label{prn1-eq3}\\
&&+2 \phi \left(\sum_{t_i\in[0,\tau^0]}u_iZ_i'(\hat{\theta}^{(1,0)}-\hat{\theta}^{(1)})+\sum_{t_i\in (\tau^0,\hat{\tau}]}u_iZ_i'
(\hat{\theta}^{(2,0)}-\hat{\theta}^{(1)})+\sum_{t_i\in (\hat{\tau},T]}u_iZ_i'
({\theta}^{(2,0)}-\hat{\theta}^{(2)})\right)\label{prn1-eq4},
\end{eqnarray}
where $\phi$ is a positive scalar to be defined later,~
$\hat{\theta}^{(1)}=Q_{(0,\hat{\tau})}^{-1}\tilde{R}_{(0,\hat{\tau})},$~
$\hat{\theta}^{(2)}=Q_{(\hat{\tau},T)}^{-1}\tilde{R}_{(\hat{\tau},T)},$
$\hat{\theta}^{(1,0)}=Q_{(0,{\tau}^0)}^{-1}\tilde{R}_{(0,{\tau}^0)}$,
and $\hat{\theta}^{(2,0)}=Q_{({\tau}^0,T)}^{-1}\tilde{R}_{({\tau}^0,T)}$, respectively.\\
\ \\
\noindent It follows as well from Proposition~2.2.8 in Zhange~(2015) (see also Proposition 4.1 in Dehling, et al.~(2010)) that
$\tilde{R}_{(0,\tau^0)}=Q_{(0,\tau^0)}\theta^{(1)}$ $+\sigma R_{(0,{\tau^0})},$
$\tilde{R}_{(\tau^0,\hat{\tau})}=Q_{(\tau^0,\hat{\tau})}\theta^{(2)}+\sigma R_{{\tau^0},\hat{\tau}},$
$\tilde{R}_{(\tau^0,T)}=Q_{(\tau^0, T)}\theta^{(2)}+\sigma R_{({\tau^0},T)}$
and $\tilde{R}_{(\hat{\tau},T)}=Q_{(\hat{\tau},T)}\theta^{(2)}+\sigma R_{(\hat{\tau},T)}$.
Therefore, $\hat{\theta}^{(1,0)}-{\theta}^{(1)}=\sigma Q_{(0,\tau^0)}^{-1}R_{(0,\tau^0)}$,
and $\hat{\theta}^{(2,0)}-{\theta}^{(2)}=\sigma Q_{(\tau^0,T)}^{-1}R_{(\tau^0,T)}$.
In this case, we have
\begin{equation}\label{prn1-eq3-1}(\ref{prn1-eq3})=-\phi\sigma^2 (R_{(0,\tau^0)}'
Q_{(0,\tau^0)}^{-1}\sum_{t_i\in[0,\tau^0]}Z_i'Z_i Q_{(0,\tau^0)}^{-1} R_{(0,\tau^0)}
+ R_{(\tau^0,T)}'Q_{(\tau^0,T)}^{-1}
\sum_{t_i\in(\tau^0,T]}Z_i'Z_i Q_{(\tau^0,T)}^{-1} R_{(\tau^0,T)}).\end{equation}
\ \\
\noindent Similarly,
 \begin{eqnarray}\hat{\theta}^{(1,0)}-\hat{\theta}^{(1)}&=&\sigma Q_{(0,\tau^0)}^{-1}
 R_{(0,\tau^0)} -   Q_{(0,\hat{\tau})}^{-1}Q_{(\tau^0,\hat{\tau})} (\theta^{(2)}-\theta^{(1)})
 -\sigma Q_{(0,\hat{\tau})}^{-1} R_{(0,\hat{\tau})}\label{prn1-eq4-1},\\
\hat{\theta}^{(2,0)}-\hat{\theta}^{(1)}&=&\sigma Q_{(\tau^0,T)}^{-1} R_{(\tau^0,T)}
-   Q_{(0,\hat{\tau})}^{-1}Q_{(0,\tau^0)} (\theta^{(1)}-\theta^{(2)})
-\sigma Q_{(0,\hat{\tau})}^{-1} R_{(0,\hat{\tau})}\label{prn1-eq4-2},\\
\hat{\theta}^{(2,0)}-\hat{\theta}^{(2)}&=&\sigma Q_{(\tau^0,T)}^{-1}
R_{(\tau^0,T)} -\sigma Q_{(\hat{\tau},T)}^{-1} R_{(\hat{\tau},T)}\label{prn1-eq4-3},
\end{eqnarray}
and
\begin{equation}\label{prn1-eq4-0}(\ref{prn1-eq4})=2 \phi \left(\sum_{t_i\in[0,\tau^0]}
u_iZ_i'(\ref{prn1-eq4-1}) +\sum_{t_i\in (\tau^0,\hat{\tau}]}
u_iZ_i'(\ref{prn1-eq4-2})+\sum_{t_i\in (\hat{\tau},T]}u_iZ_i'(\ref{prn1-eq4-3})\right).\end{equation}

\begin{proof}[Proof of Proposition~\ref{prn13}]
\noindent Take $\displaystyle \phi=\frac{1}{T}$. In general, (\ref{prn1-eq1}) is non-positive
with probability 1 since by (\ref{cp1}), $\hat{\tau}$ is chosen from all
possible values in $[0,T]$ to minimise the SSR, whilst $\tau^0$ is just a
particular value in $[0,T]$. Hence, it suffices to show that if the rate of
$\tau^0$, given by $s^0=\tau^0/T$,  can not be consistently estimated by
$\hat{s}=\hat{\tau}/T$, then $(\ref{prn1-eq1})>0$ with positive probability
and thus we have a contradiction. \\
\ \\
\noindent First, note that in case that the rate of the change point ${\tau^0}$
can not be consistently estimated, then with positive probability there exists
an $\eta>0$ such that $\hat{s}T-s^0 T>\eta T>L^0$ for large $T$. In this case,
we have $(\ref{prn1-eq2})\geq  C_1 ||\theta^{(1)}-\theta^{(2)}||^2$ for
some $C_1>0$ with positive probability (see Bai and Perron (1998), Lemma 2). \\
\ \\
\noindent To proceed further, we prove the following inequality. Note that by (\ref{sol2}), $\sup_{t\geq 0} \textit{E}((X_t)^2)\leq K_1$ for some $K_1$ with $0<K_1<\infty$. This implies that for $0<\tau_1^*<\tau_2^*\leq T$,
\begin{equation}\label{ineq}
\int_{\tau_1^*}^{{\tau}_2^*}E(X_t^2)dt \leq K_1(\tau_2^*-\tau_1^*).
 \end{equation}
\noindent Further, by the Markov inequality and Ito isometry, together with inequality \eqref{ineq}, we have
\begin{eqnarray*}
\textrm{P}\left(\frac{1}{\sqrt{\tau_2^*-\tau_1^*}}|\int_{\tau_1^*}^{{\tau_2}^*}X_td W_t|>K^*\right)
&\leq& \frac{\textit{E}\left(|\int_{\tau_1^*}^{\tau_2^*}X_td W_t|^2\right)}{(\tau_2^*-\tau_1^*)(K^*)^2}
=\frac{\int_{\tau_1^*}^{{\tau}_2^*}E(X_t^2)dt}{(\tau_2^*-\tau_1^*)(K^*)^2}
\leq \frac{K_1(\tau_2^*-\tau_1^*)}{(\tau_2^*-\tau_1^*)(K^*)^2}=\frac{K_1}{(K^*)^2}.
\nonumber
\end{eqnarray*}
\noindent Under Assumption 2, it follows from (B.28) in Zhang~(2015) that
\begin{eqnarray*}
\textrm{P}\left(\frac{1}{\sqrt{\tau_2^*-\tau_1^*}}\left| \int_{\tau_1^*}^{{\tau_2}^*}\varphi_i(t)d W_t \right| >K^*\right)
&\leq& \frac{\textit{E}\left( \left| \int_{\tau_1^*}^{\tau_2^*}\varphi_i(t)d W_t\right| ^2\right)}{(\tau_2^*-\tau_1^*)(K^*)^2}
\leq \frac{1}{(K^*)^2}. \nonumber
\end{eqnarray*}
\noindent Hence, by letting $K^*=\log^{a^*} T$ or $K^*= (\hat{\tau}-\tau^0)^{a^*}$
for $0<a^*<1/2$, the above probability tends to 0 as $T$
tends to infinity. This implies that
\begin{equation}\label{conv_R}\frac{1}{\sqrt{\tau_2^*-\tau_1^*}}||R_{(\tau_1^*,\tau_2^*)}||
=O_p(\log^{a^*} T) \quad \textrm{or}\quad O_p( (\hat{\tau}-\tau^0)^{a^*})
\quad \textrm{for any}
\quad 0<\tau_1^*<\tau_2^*\leq T.\end{equation}
\ \\
Similarly for the discretised case, $\frac{1}{\sqrt{\tau_2^*-\tau_1^*}}||
\sum_{t_i\in (\tau_1^*,\tau_2^*]}Z_iu_i||=O_p(\log^{a^*} T)$ or $O_p( (\hat{\tau}-\tau^0)^{a^*})$.
Furthermore, it follows from Proposition~\ref{conQ1} - \ref{conQ2}, and Continuous
Mapping Theorem that
\begin{equation}\label{conv_Q}||\frac{1}{T}Q_{(\tau_1^*,\tau_2^*)}||=O_p(1)
\textrm{ and } ||TQ_{(\tau_1^*,\tau_2^*)}^{-1}||=O_p(1) \end{equation}
for any $\tau_1^*,\tau_2^*\in \{0,\tau^0,\hat{\tau}, T\}$ such that $\tau_1^*<\tau_2^*$.
Then, applying the Cauchy-Schwarz's inequality to (\ref{prn1-eq3-1}) and (\ref{prn1-eq4-0}),
together with above asymptotic results, we have that (\ref{prn1-eq3}) and (\ref{prn1-eq4})
are both $o_p(1)$. Hence, (\ref{prn1-eq1}) is dominated by (\ref{prn1-eq2}),
which is positive, and thus gives a contradiction.
Therefore, $\hat{s}-s^0\xrightarrow[T\rightarrow \infty]{P}0$.
\end{proof}
\ \\
\ \\
\begin{proof}[Proof of Proposition~\ref{prn14}]
\noindent Write $\phi:=\frac{1}{\hat{\tau}-\tau^0}$ and $V_\eta:=\{\tau: |\tau-\tau^0|\leq\eta T\}$.
It follows from Proposition~\ref{prn13} that for each
$\eta>0$, $P(\hat{\tau}\in V_\eta)\xrightarrow[T \rightarrow \infty]{} 1$. Therefore,
we only need to investigate the sum of squared error $SSE(\hat{\tau})$ for those
$\hat{\tau}\in V_\eta$.  For $C>0$, define the set $V_{\eta}(C)=\{{\tau}: C<|{\tau}-{\tau^0}|<\eta T\}$
and let $\hat{\tau}$ be the estimated change point with the minimum taken over
the set $V_\eta(C)$.
Then,  it suffices to show that  the order of these three terms, or one of the three terms
is larger than all of the remaining terms in (\ref{prn1-eq1}), and that leads to a contradiction
since the term $(\ref{prn1-eq1}) \leq 0$ with probability 1. \\
\ \\
\noindent First notice that (\ref{prn1-eq2}) is $O_p(1)$ instead of $o_p(1)$.
Hence, it is difficult
to compare it with (\ref{prn1-eq1}) directly. In this case, we need to factorise the
term (\ref{prn1-eq1}). We observe that
\begin{eqnarray*}
\sum_{t_i\in[0,\tau^0]}(Z_i({\theta}^{(1)}-\hat{\theta}^{(1)}))^2&=&(a1)+(a2)+(a3),
\end{eqnarray*}
where
\begin{eqnarray}
(a1)&=&(\theta^{(1)}-\theta^{(2)})'Q_{({\tau^0},\hat{\tau})}Q_{(0,\hat{\tau})}^{-1}
\sum_{t_i\in[0,\tau^0]}Z_i'Z_iQ_{(0,\hat{\tau})}^{-1}Q_{(\tau^0,\hat{\tau})}
(\theta^{(1)}-\theta^{(2)}), \nonumber \\
(a2)&=& R_{(0,\hat{\tau})}\sigma^2Q_{(0,\hat{\tau})}^{-1}\times\sum_{t_i\in[0,\tau^0]}Z_i'Z_i
Q_{(0,\hat{\tau})}^{-1} R_{(0,\hat{\tau})} \nonumber \\
(a3)&=&2\sigma(\theta^{(1)}-\theta^{(2)})'Q_{({\tau^0},\hat{\tau})}Q_{(0,
\hat{\tau})}^{-1}
\sum_{t_i\in[0,\tau^0]}Z_i'Z_i  Q_{(0,\hat{\tau})}^{-1} R_{(0,\hat{\tau})}. \nonumber
\end{eqnarray}
\noindent Similarly,
\begin{eqnarray*}\sum_{t_i\in (\tau^0,\hat{\tau}]}
(Z_i({\theta}^{(2)}-\hat{\theta}^{(1)}))^2&=& (a4)+(a5)+(a6),\end{eqnarray*}
where
\begin{eqnarray}
(a4)&=&(\theta^{(1)}-\theta^{(2)})'Q_{(0,{\tau^0})}Q_{(0,\hat{\tau})}^{-1}
\sum_{t_i\in (\tau^0,\hat{\tau}]}Z_i'Z_iQ_{(0,\hat{\tau})}^{-1}Q_{(0,\tau^0)}
(\theta^{(1)}-\theta^{(2)}), \nonumber \\
(a5)&=& R_{(0,\hat{\tau})}\sigma^2 Q_{(0,\hat{\tau})}^{-1} \times
\sum_{t_i\in (\tau^0,\hat{\tau}]}
Z_i'Z_i  Q_{(0,\hat{\tau})}^{-1} R_{(0,\hat{\tau})}, \nonumber \\
(a6)&=&2\sigma(\theta^{(1)}-\theta^{(2)})'Q_{(0,{\tau^0})}
Q_{(0,\hat{\tau})}^{-1}\sum_{t_i\in (\tau^0,\hat{\tau}]}Z_i'Z_i
Q_{(0,\hat{\tau})}^{-1} R_{(0,\hat{\tau})}. \nonumber
\end{eqnarray}
\noindent and
\begin{eqnarray*}
\sum_{t_i\in (\hat{\tau},T]}(Z_i{\theta}^{(2)}-Z_i\hat{\theta}^{(2)})'
(Z_i{\theta}^{(2)}-Z_i\hat{\theta}^{(2)})&=&\sigma^2 R_{(\hat{\tau},T)}
Q_{(\hat{\tau},T)}^{-1}\sum_{t_i\in (\hat{\tau},T]}Z_i'Z_i
Q_{(\hat{\tau},T)}^{-1} R_{(\hat{\tau},T)}=(a7).
\end{eqnarray*}

\noindent One could see that $\phi (a1)$, $\phi (a4)$ and $\phi (a6)$ are
all of order $o_p(1)$. Additionally, by Theorem A.1 in Tobing and McGlichrist
(1992), we have that for large $T$,
\begin{equation}\label{tm-1}Q_{(0,\hat{\tau})}^{-1}=Q_{(0,{\tau^0})}^{-1}+O_p
\left(\frac{\hat{\tau}-\tau^0}{T^2}\right).\end{equation}
Therefore,
\begin{eqnarray}&& R_{(0,\hat{\tau})}' Q_{(0,\hat{\tau})}^{-1}
\sum_{t_i\in[0,\tau^0]}Z_i'Z_i  Q_{(0,\hat{\tau})}^{-1} R_{(0,\hat{\tau})}
- R_{(0,\tau^0)}' Q_{(0,{\tau}^0)}^{-1}\sum_{t_i\in[0,\tau^0]}Z_i'Z_i
Q_{(0,\tau^0))}^{-1} R_{(0,\tau^0))}\label{prn1-eq5} \\
&=&(R_{(0,{\tau}^0)}+R_{(\tau^0,\hat{\tau})})'\left( Q_{(0,{\tau}^0)}^{-1}
+O_p\left(\frac{\hat{\tau}-\tau^0}{T^2}\right)\right)
\sum_{t_i\in[0,\tau^0]}Z_i'Z_i  \left( Q_{(0,{\tau}^0)}^{-1}
+O_p\left(\frac{\hat{\tau}-\tau^0}{T^2}\right)\right)\nonumber\\
&&\times(R_{(0,{\tau}^0)}+R_{(\tau^0,\hat{\tau})})- R_{(0,\tau^0)}
Q_{(0,{\tau}^0)}^{-1}\sum_{t_i\in[0,\tau^0]}Z_i'Z_i  Q_{(0,\tau^0))}^{-1}
R_{(0,\tau^0)},\nonumber\\
&&O_p\left(\frac{\hat{\tau}-\tau^0}{T^2}\right)R_{(0,{\tau}^0)}
\sum_{t_i\in[0,\tau^0]}Z_i'Z_i  Q_{(0,\tau^0))}^{-1}R_{(0,\tau^0)}
+O_p\left(\frac{(\hat{\tau}-\tau^0)^2}{T^4}\right)R_{(0,{\tau}^0)}
\sum_{t_i\in[0,\tau^0]}Z_i'Z_i  R_{(0,\tau^0)}\nonumber\\
&&+2R_{(0,\tau^0)} Q_{(0,{\tau}^0)}^{-1}\sum_{t_i\in[0,\tau^0]}
Z_i'Z_i Q_{(0,{\tau}^0)}^{-1}R_{(\tau^0,\hat{\tau})}
+O_p\left(\frac{\hat{\tau}-\tau^0}{T^2}\right)R_{(0,\tau^0)}
Q_{(0,{\tau}^0)}^{-1}\sum_{t_i\in[0,\tau^0]}Z_i'Z_i R_{(\tau^0,\hat{\tau})}\nonumber\\
&&+ O_p\left(\frac{\hat{\tau}-\tau^0}{T^2}\right) R_{(0,\tau^0)}
\sum_{t_i\in[0,\tau^0]}Z_i'Z_i Q_{(0,{\tau}^0)}^{-1}R_{(\tau^0,\hat{\tau})}
+ O_p\left(\frac{(\hat{\tau}-\tau^0)^2}{T^4}\right) R_{(0,\tau^0)}
\sum_{t_i\in[0,\tau^0]}Z_i'Z_i  R_{(\tau^0,\hat{\tau})}\nonumber\\
&&+R_{(\tau^0,\hat{\tau})} Q_{(0,{\tau}^0)}^{-1}\sum_{t_i\in[0,\tau^0]}
Z_i'Z_i Q_{(0,{\tau}^0)}^{-1}R_{(\tau^0,\hat{\tau})}
+ O_p\left(\frac{\hat{\tau}-\tau^0}{T^2}\right) R_{(\tau^0,\hat{\tau})}
Q_{(0,{\tau}^0)}^{-1}\sum_{t_i\in[0,\tau^0]}Z_i'Z_i  R_{(\tau^0,\hat{\tau})}\nonumber\\
&&+O_p\left(\frac{(\hat{\tau}-\tau^0)^2}{T^4}\right) R_{(\tau^0,\hat{\tau})}
\sum_{t_i\in[0,\tau^0]}Z_i'Z_i  R_{(\tau^0,\hat{\tau})}\nonumber.
\end{eqnarray}

\noindent Since $\hat{\tau}-\tau^0\leq \eta T$ for each $\eta>0$, and
using the asymptotic results in the proof of Proposition~\ref{prn13}
with small enough $\eta$, we have $\phi \sigma^2 (\ref{prn1-eq5})=o_p(1)$.
Similarly,
$$\phi\sigma^2 \left(R_{(\hat{\tau},T)} Q_{(\hat{\tau},T)}^{-1}
\sum_{t_i\in (\hat{\tau},T]}Z_i'Z_i  Q_{(\hat{\tau},T)}^{-1}
R_{(\hat{\tau},T)}-R_{(\tau^0,T)} Q_{(\tau^0,T)}^{-1}\sum_{t_i\in(\tau^0,T]}
Z_i'Z_i  Q_{(\tau^0,T)}^{-1} R_{(\tau^0,T)} \right)=o_p(1).$$

\noindent Consequently, $(a2)+(a4)+(\ref{prn1-eq2})=o_p(1)$. For $(a3)$, we have
\begin{eqnarray*}
&&(\theta^{(1)}-\theta^{(2)})'Q_{(0,{\tau^0})}Q_{(0,\hat{\tau})}^{-1}
\frac{{\sum_{t_i\in (\tau^0,\hat{\tau}]}Z_i'Z_i}}{\hat{\tau}-\tau^0}
Q_{(0,\hat{\tau})}^{-1}Q_{(0,\tau^0)}(\theta^{(1)}-\theta^{(2)})\label{ratecov_1}\\
&=& (\theta^{(1)}-\theta^{(2)})'Q_{(0,{\tau^0})}\left(Q_{(0,{\tau^0})}^{-1}
+O_p\left(\frac{\hat{\tau}-\tau^0}{T^2}\right)\right)
\frac{{\sum_{t_i\in (\tau^0,\hat{\tau}]}Z_i'Z_i}}{\hat{\tau}-\tau^0}
\left(Q_{(0,{\tau^0})}^{-1}+O_p\left(\frac{\hat{\tau}-\tau^0}{T^2}\right)\right)\\
&&\times Q_{(0,\tau^0)}(\theta^{(1)}-\theta^{(2)})
= (\theta^{(1)}-\theta^{(2)})'\frac{{\sum_{t_i\in (\tau^0,\hat{\tau}]}Z_i'Z_i}}
{\hat{\tau}-\tau^0}(\theta^{(1)}-\theta^{(2)})+o_p(1),
\end{eqnarray*}
with
$ (\theta^{(1)}-\theta^{(2)})'\frac{{\sum_{t_i\in (\tau^0,\hat{\tau}]}Z_i'Z_i}}
{\hat{\tau}-\tau^0}(\theta^{(1)}-\theta^{(2)})
\geq \gamma_1 ||\theta^{(1)}-\theta^{(2)}||^2,$
where $\gamma_1$ is the minimum eigenvalue of
$\frac{\sum_{t_i\in (\tau^0,\hat{\tau}]}Z_i'Z_i}{\hat{\tau}-\tau^0}.$
Under Assumption 4 with a suitable choice of $C$, we have that for
$\hat{\tau}\in V_\eta (C)$, $\gamma_1$ is bounded away from 0.
Hence, $\phi(a3)\geq C_2 ||\theta^{(1)}-\theta^{(2)}||^2$ for
some $C_2>0$. Moreover, applying (\ref{tm-1}) to (\ref{prn1-eq4}),
together with some factorisations, we have that $(\ref{prn1-eq4})=o_p(1)$.
Therefore, the term $\phi(a3)$ dominates all others and it is positive
with probability 1 for large $T$. This implies that with large probability,
$(\ref{prn1-eq1})>0$, which gives a contradiction. This indicates that with
large probability $\hat{\tau}$ can not be in the set $V_{\eta}(C)$ and
hence $P(T|\hat{s}-s^0|\geq C)\leq \epsilon$ when $T$ is large.
\end{proof}
\ \\
\begin{rem}\label{remark2}As discussed in Remark~\ref{remark1} in Section 4, in case the shift is of shrinking magnitude with shrinking speed $v_T$, the asymptotic behavior (i) discussed in Remark~\ref{remark1} may be verified by following the same arguments in the proof of Proposition~\ref{prn13} with $\phi=\frac{1}{T^{2r^*}}$, together with the fact that $T\phi||\theta^{(1)}-\theta^{(2)}||^2=(T^{1-2r^*}v_T^2||\mathbf{M}||^2) \xrightarrow[T\rightarrow \infty]{}\infty$ and $\log T/T^{2r^*} \xrightarrow[T\rightarrow \infty]{}0$. On the other hand, (ii) may be verified by following similar arguments as in the proof of Proposition~\ref{prn14} to investigate the set $V_{\eta}(C, v_T)=\{{\tau}: C/v_T^2<|{\tau}-{\tau^0}|<\eta T\}$ instead of $V_{\eta}(C)$.
\end{rem}
\ \\
\subsection{Proof of  Proposition~\ref{prn-mle1} and \ref{prn-mle2}}
\noindent Write
$V(t):=(\varphi_1(t),...,\varphi_p(t),-X_{t})$ and let $\log \mathcal{L}_1$
be the log likelihood function based on the estimated change point $\hat{{\tau}}$
and the associated MLE $(\hat{\theta}^{(1)},\hat{\theta}^{(2)})$. That is,
$\log\mathcal{L}_1=\frac{1}{\sigma^2}(\int_{0}^{\hat{\tau}}S(\hat{\theta}^{(1)},t,X_t)dX_t
+\int_{\hat{\tau}}^T S(\hat{\theta}^{(2)},t,X_t)dX_t )-\frac{1}
{2\sigma^2}(\int_{0}^{\hat{\tau}}S^2(\hat{\theta}^{(1)},t,X_t)dt
+\int_{\hat{\tau}}^TS^2(\hat{\theta}^{(2)},t,X_t)dt.$
Similarly, we let $\log\mathcal{L}_0$ be the log likelihood function
based on the exact value of change point $\tau^0$ and the associated
MLE $(\hat{\theta}^{(1,0)},\hat{\theta}^{(2,0)})$. Then, the proof of
Propositions~\ref{prn-mle1} and \ref{prn-mle2} rely on the behaviour of
\begin{equation}\label{prn2-eq1}\phi (\log\mathcal{L}_1- \log\mathcal{L}_0).\end{equation}
\ \\
\noindent Now, without loss of generality, suppose first that $\hat{\tau}>\tau^0$ and divide the time period $[0,T]$ into three sub-intervals: $[0,\tau^0]$,
$(\tau^0,\hat{\tau}]$ and $(\hat{\tau},T]$. Using relation (\ref{ou1}),
along with some algebraic manipulation, we have
\begin{eqnarray*}
 (\ref{prn2-eq1})&=&\frac{1}{2\sigma^2}\left(\int_{0}^{{\tau^0}}(V(t)\hat{\theta}^{(1)})^2dt
+\int_{\tau^0}^{\hat{\tau}}(V(t)\hat{\theta}^{(1)})^2dt
+\int_{\hat{\tau}}^{T}(V(t)\hat{\theta}^{(2)})^2dt\right)\\
&&-\frac{1}{2\sigma^2}\left(\int_{0}^{{\tau^0}}(V(t)\hat{\theta}^{(1,0)})^2dt
+\int_{\tau^0}^{\hat{\tau}}(V(t)\hat{\theta}^{(2,0)})^2dt
+\int_{\hat{\tau}}^{T}(V(t)\hat{\theta}^{(2,0)})^2dt\right)\\
&&+\frac{1}{\sigma}\left(\int_{0}^{{\tau^0}}(V(t)\hat{\theta}^{(1)})dW_t
+\int_{\tau^0}^{\hat{\tau}}(V(t)\hat{\theta}^{(1)})dW_t
+ \int_{\hat{\tau}}^{T}(V(t)\hat{\theta}^{(2)})dW_t\right)\\
&&- \frac{1}{\sigma}\left(\int_{0}^{{\tau^0}}(V(t)\hat{\theta}^{(1,0)})dW_t
+\int_{\tau^0}^{\hat{\tau}}(V(t)\hat{\theta}^{(2,0)})dW_t
+ \int_{\hat{\tau}}^{T}(V(t)\hat{\theta}^{(2,0)})dW_t\right).
\end{eqnarray*}

\noindent We plug in the expressions of the MLEs, and
after some algebraic computations, we get
\begin{eqnarray}
  (\ref{prn2-eq1})&=&\frac{\phi}{\sigma}\left(2(\theta^{(1)}-\theta^{(2)})'R_{({\tau}^0,\hat{\tau})}
- 2(\theta^{(1)}-\theta^{(2)})'Q_{(\tau^0,\hat{\tau})}Q_{(0,\hat{\tau})}^{-1}
R_{(0,\hat{\tau})}\right)\quad\quad\label{prn2-eq2}\\
&&+ \frac{3\phi}{2}\left(R_{(0,\hat{\tau})}'Q_{(0,\hat{\tau})}^{-1} R_{(0,\hat{\tau})}
- R_{(0,{\tau}^0)}'Q_{(0,{\tau}^0)}^{-1} R_{(0,\tau^0)}
+ R_{(\hat{\tau},T)}'Q_{(\hat{\tau},T)}^{-1} R_{(\hat{\tau},T)}\right)\label{prn2-eq3}\\
&&- \frac{3\phi}{2}R_{(\tau^0,T)}'Q_{(\tau^0,T)}^{-1} R_{(\tau^0,T)}\label{prn2-eq4}\\
&&- \frac{\phi}{2\sigma^2}(\theta^{(1)}-\theta^{(2)})' Q_{(0,\tau^0)}
Q_{(0,\hat{\tau})}^{-1}Q_{({\tau^0},\hat{\tau})}(\theta^{(1)}-\theta^{(2)})\label{prn2-eq5}.
\end{eqnarray}

\noindent The remaining parts of the proofs for
Propositions~\ref{prn-mle1} and \ref{prn-mle2} depend on investigating the asymptotic behaviours of
(\ref{prn2-eq2}) - (\ref{prn2-eq5}).

\begin{proof}[Proof of Proposition~\ref{prn-mle1}.]
Let $\displaystyle \phi=\frac{1}{T}$. Note that $\log \mathcal{L}_1$ is taken to be the maximum
of the log likelihood function from all possible choices of $\tau\in[0,T]$
whilst $\log\mathcal{L}_0$ is based on one particular change point $\tau^0\in [0,T]$.
It follows from the definition of MLE that $(\ref{prn2-eq1})\geq 0$ with probability 1.
However, if the rate of change point $s^0$ is not consistently estimated by $\hat{s}$,
then with positive probability, we have $\hat{\tau}-\tau^0= (\hat{s}-s^0) T > L_0^*$, where $L_0^*$ is defined in Assumption \ref{asm4}. In this case, under Assumptions 3 and 5,
the minimum eigenvalue of $D^*=\frac{1}{2(\hat{s}-s^0) T}(Q_{(0,\tau^0)}Q_{(0,\hat{\tau})}^{-1}Q_{({\tau^0},
\hat{\tau})}+Q_{({\tau^0},\hat{\tau})}Q_{(0,\hat{\tau})}^{-1}Q_{(0,\tau^0)})$ is bounded away from 0.
Denoting this mimimum eigenvalue by $\gamma_2$, we have
\begin{equation}\label{prn2-eq6} \frac{\phi}{2\sigma^2}(\theta^{(1)}
-\theta^{(2)})' Q_{(0,\tau^0)}Q_{(0,\hat{\tau})}^{-1}Q_{({\tau^0},
\hat{\tau})}(\theta^{(1)}-\theta^{(2)})=\frac{(\hat{s}-s^0)}{4\sigma^2}(\theta^{(1)}
-\theta^{(2)})' D^*(\theta^{(1)}-\theta^{(2)})\geq  \frac{(\hat{s}-s^0) \gamma_2}
{4\sigma^2 }||\theta^{(1)}-\theta^{(2)}||^2.\end{equation}
So, $ (\ref{prn2-eq5})\leq - C_3 ||\theta^{(1)}-\theta^{(2)}||^2$
with $\displaystyle C_3=\frac{(\hat{s}-s^0) \gamma_2}{4\sigma^2 }>0$. \\
\ \\
Using (\ref{conv_R}) and (\ref{conv_Q}), together with Cauchy-Schwarz's inequality, along
with some algebraic computations, we find that (\ref{prn2-eq2}) - (\ref{prn2-eq4})
are all of order $o_p(1)$. Hence, (\ref{prn2-eq1})
is dominated by (\ref{prn2-eq5}), which is negative.
This means that  $(\ref{prn2-eq1})<0$ with positive probability,
which is a contradiction. Therefore, for large $T$ and $\forall\epsilon>0$, $\hat{s}-s^0<\epsilon.$
This implies that $\hat{s}-s^0\xrightarrow[T\rightarrow\infty]{P}0$,
which completes the proof.
\end{proof}

\begin{proof}[Proof of Proposition~\ref{prn-mle2}]
\noindent Write $\displaystyle \phi:=\frac{1}{\hat{\tau}-\tau^0}$ and
$V_\eta:=\{\tau: |\tau-\tau^0|\leq\eta T\}$. Then,
it follows from Proposition~\ref{prn-mle1} that for each
$\eta>0$, $P(\hat{\tau}\in V_\eta)\xrightarrow[T \rightarrow \infty]{} 1$.
Therefore, we only need to investigate the asymptotic behaviour of
(\ref{prn2-eq2})--(\ref{prn2-eq5}) for those $\hat{\tau}\in V_\eta$.
For $C>0$, define the set $V_{\eta}(C)=\{{\tau}: C<|{\tau}-{\tau^0}|<\eta T\}$
and let $\hat{\tau}$ be the estimated change point with the minimum taken
over the set $V_\eta(C)$. Then,  it suffices to show that for some $C>0$,
such that for any $\hat{\tau}\in V_\eta(C)$, $(\ref{prn1-eq1})<0$ with positive probability ,
and this leads to a contradiction since the term $(\ref{prn1-eq1}) \leq 0$ with probability 1.
This would imply that for some $C>0$ and any $0<\eta<1$, the global optimisation can
not be achieved on the set $V_{\eta}(C)$. Thus with large probability,
$|\hat{\tau}-\tau^0|\leq C$. \\
\ \\
\noindent First note that under Assumption 4, with a suitable $C$ such that $C>L_0$,
it follows from (\ref{prn2-eq6}) that $ (\ref{prn2-eq5})\leq - C_3 ||\theta^{(1)}-\theta^{(2)}||^2$
for some $C_3>0$. \\
\ \\
\noindent Next, for (\ref{prn2-eq2}), by (\ref{conv_R}) and the Cauchy-Schwarz's inequality,
we have
\begin{eqnarray*}\frac{2(\theta^{(1)}-\theta^{(2)})' R_{({\tau}^0,\hat{\tau})} }
{\sigma (\hat{\tau}-\tau^0)}
\leq \frac{2}{\sigma \sqrt{\hat{\tau}-\tau^0}} || \theta^{(1)}-\theta^{(2)}
|| || \frac{1}{\sqrt{\hat{\tau}-\tau^0}} R_{({\tau}^0,\hat{\tau})}||
=(\hat{\tau}-\tau^0)^{a^*-1/2}O_p(1),
\end{eqnarray*}
where $0<a^*<1/2$, and
\begin{eqnarray*}\frac{2(\theta^{(1)}-\theta^{(2)})' Q_{(\tau^0,\hat{\tau})}
Q_{(0,\hat{\tau})}^{-1}R_{(0,\hat{\tau})} }{\sigma (\hat{\tau}-\tau^0)}
\leq \frac{2}{\sigma \sqrt{T}} || \theta^{(1)}-\theta^{(2)}|| ||
\frac{Q_{(\tau^0,\hat{\tau})}}{ (\hat{\tau}-\tau^0)} || ||
TQ_{(0,\hat{\tau})}^{-1} ||  || \frac{1}{\sqrt{T}} R_{(0,\hat{\tau})}||\\
=o_p(1)
\end{eqnarray*}

\noindent For (\ref{prn2-eq4}), applying again Theorem A.1 in
Tobing and McGlichrist (1992), we have that for large $T$,
\begin{equation}\label{tm-2}Q_{({\tau}^0, T)}^{-1}=Q_{(\hat{\tau},T)}^{-1}
+O_p\left(\frac{\hat{\tau}-\tau^0}{T^2}\right),\end{equation}
Together with (\ref{tm-1}), we obtain
\begin{eqnarray*}
R_{(0,\hat{\tau})}'Q_{(0,\hat{\tau})}^{-1} R_{(0,\hat{\tau})}
- R_{(0,{\tau}^0)}'Q_{(0,{\tau}^0)}^{-1} R_{(0,\tau^0)}
= R_{(\tau^0,\hat{\tau})}'Q_{(0,{\tau}^0)}^{-1} R_{(\tau^0,\hat{\tau})}
- 2R_{(0, \tau^0)}'Q_{(0,{\tau}^0)}^{-1} R_{(\tau^0,\hat{\tau})}\\
+O_p\left(\frac{\hat{\tau}-\tau^0}{T^2}\right) R_{(0,\hat{\tau})}' R_{(0,\hat{\tau})}
\end{eqnarray*}
\noindent and
\begin{eqnarray*}
 R_{(\tau^0,T)}'Q_{(\tau^0,T)}^{-1} R_{(\tau^0,T)}- R_{(\hat{\tau},T)}'
 Q_{(\hat{\tau},T)}^{-1} R_{(\hat{\tau},T)}
=R_{(\tau^0,\hat{\tau})}'Q_{(\hat{\tau},T)}^{-1} R_{(\tau^0,\hat{\tau})}
- 2R_{(\hat{\tau},T)}'Q_{(\hat{\tau},T)}^{-1} R_{(\tau^0,\hat{\tau})}\\
+O_p\left(\frac{\hat{\tau}-\tau^0}{T^2}\right) R_{(\tau^0,T)}' R_{(\tau^0,T)}.
\end{eqnarray*}
From (\ref{conv_R}), (\ref{conv_Q}), together with Cauchy-Schwarz's
inequality, we have $(\ref{prn2-eq4})=o_p(1)$ for large $T$.
Therefore, by choosing a suitable large $C$, we obtain
$(\ref{prn2-eq5})+(\ref{prn2-eq2})<0$,
which implies that $(\ref{prn2-eq1})<0$ and this gives a contradiction.
Therefore, with large probability, $\hat{\tau}$ can not be in the set
$V_{\eta}(C)$ and hence $P(T|\hat{s}-s^0|\geq C)\leq \epsilon$ when
$T$ is large.
\end{proof}

\section{Proof of Proposition~\ref{prn-ictest}.}\label{append-5}
\noindent This proof can be completed by comparing $\mathcal{IC}(m=0)$ and $\mathcal{IC}(m=1)$
under $\textrm{H}_0$ and $\textrm{H}_1$, respectively. Moreover, note that
$\log(T/\Delta_t)=\log T-\log(\Delta_t)$ and $\log T$ is just a special
case of $\log(T/\Delta_t)$ with $\Delta_t=1$. Hence, in the succeeding proof,
we only prove the case $\phi(T)=\log(T/\Delta_t)$ with $\Delta_t$ a fixed constant.

\begin{proof}
Under $\textrm{H}_0$, $m^0=0$ and $\theta^{(1)}=\theta^{(2)}$. In this case, for $\mathcal{IC}(m=0)$, it
follows from (\ref{ou1}) that
\begin{eqnarray*}\mathcal{IC}(m=0)
= -2\left( \frac{1}{\sigma^2}\int_{0}^{T}S(\hat{\theta}^{(1)},t,X_t)dX_t
- \frac{1}{2\sigma^2}\int_{0}^{T}S(\hat{\theta}^{(1)},t,X_t)^2dt\right) +h(p)
\log (T/\Delta_t)\\
=-2\left( \frac{1}{2\sigma^2}\theta^{(1)\prime}Q_{(0,T)}\theta^{(1)}
+\frac{2}{\sigma}\theta^{(1)\prime}R_{(0,T)}+ \frac{3\sigma^2}{2}R_{(0,T)}\rq{}
Q_{(0,T)}^{-1}R_{(0,T)}\right)+h(p)\log (T/\Delta_t).
\end{eqnarray*}

\noindent Assume further that we get $\hat{\tau}$ from (\ref{mle1})
when $m$ is set to 1 (note that in this case $\theta^{(2)}$ is still equal to $\theta^{(1)}$). Then,
\begin{eqnarray*}\mathcal{IC}(m=1)
= -\frac{1}{\sigma^2}\theta^{(1)\prime}Q_{(0,T)}\theta^{(1)}-\frac{4}
{\sigma}\theta^{(1)\prime}R_{(0,T)}- 3 R_{(0,\hat{\tau})}\rq Q_{(0,\hat{\tau})}^{-1}
R_{(0,\hat{\tau})}\\-3R_{(\hat{\tau},T)}\rq{}Q_{(\hat{\tau},T)}^{-1}R_{(\hat{\tau},T)}
+2h(p)(\log T- \log(\Delta_t))
\end{eqnarray*}

\noindent so that
\begin{eqnarray}\label{prn3-eq1}\mathcal{IC}(m=1)-\mathcal{IC}(m=0)=3R_{(0,T)}\rq{}
Q_{(0,T)}^{-1}R_{(0,T)}-3 R_{(0,\hat{\tau})}\rq Q_{(0,\hat{\tau})}^{-1}R_{(0,\hat{\tau})}
\nonumber\\-3R_{(\hat{\tau},T)}\rq{}Q_{(\hat{\tau},T)}^{-1}R_{(\hat{\tau},T)}
+h(p)(\log T- \log(\Delta_t)).\end{eqnarray}
Moreover, by (\ref{conv_R}) and (\ref{conv_Q}), together with the Cauchy-Schwarz's
inequality,
$$ R_{(0,T)}\rq{}Q_{(0,T)}^{-1}R_{(0,T)}\leq
||\frac{1}{\sqrt{T}}R_{(0,T)}||^2  ||TQ_{(0,T)}^{-1}||=O_p(\log^{2a^*} T),
$$ where $0<a^*<1/2.$
Similarly, $R_{(0,\hat{\tau})}\rq Q_{(0,\hat{\tau})}^{-1}R_{(0,\hat{\tau})}$
and $R_{(\hat{\tau},T)}\rq{}Q_{(\hat{\tau},T)}^{-1}R_{(\hat{\tau},T)}$
are also of order $O_p(\log^{2a^*} T)$. Therefore, for large $T$ and
fixed $\Delta_t$, (\ref{prn3-eq1}) is dominated by $h(p)\log T$, which is positive.
This implies that under $H_0$, the probability of $\mathcal{IC}(m=0)>\mathcal{IC}(m=1)$ tends to 0 as $T$ tends to $\infty$. \\
\ \\
\noindent Under $\textrm{H}_1$, let $\tau^0$ be the exact value of
the change point and $\hat{\tau}$ be the estimator of $\tau^0$ obtained from
(\ref{mle1}). Without loss of generality, we assume that $\hat{\tau}>\tau^0$. So,
\begin{eqnarray*}\mathcal{IC}(m=0)=-\frac{1}{\sigma^2}\theta^{(2)\prime}Q_{(0,T)}\theta^{(2)}
-\frac{1}{\sigma^2}(\theta^{(1)}-\theta^{(2)})'Q_{(0,\tau^0)}Q_{(0,T)}^{-1}Q_{(0,\tau^0)}
(\theta^{(1)}-\theta^{(2)})\\
-3R_{(0,T)}'Q_{(0,T)}^{-1}R_{(0,T)}-\frac{4}{\sigma}\theta^{(2)\prime}R_{(0,T)}
-\frac{4}{\sigma}(\theta^{(1)}-\theta^{(2)})'Q_{(0,\tau^0)}Q_{(0,T)}^{-1}R_{(0,T)}
\\-\frac{4}{\sigma^2}\theta^{(2)\prime}Q_{(0,\tau^0)}(\theta^{(1)}-\theta^{(2)})
+h(p)(\log T- \log(\Delta_t)),\end{eqnarray*}
and
\begin{eqnarray*}\mathcal{IC}(m=1)=-\frac{1}{\sigma^2}
\theta^{(2)\prime}Q_{(0,T)}\theta^{(2)}-\frac{1}{\sigma^2}
(\theta^{(1)}-\theta^{(2)})'Q_{(0,\tau^0)}Q_{(0,\hat{\tau})}^{-1}
Q_{(0,\tau^0)}(\theta^{(1)}-\theta^{(2)})\\
-3R_{(0,\hat{\tau})}'Q_{(0,\hat{\tau})}^{-1}R_{(0,\hat{\tau})}
-3R_{(\hat{\tau},T)}'Q_{(\hat{\tau},T)}^{-1}R_{(\hat{\tau},T)}
-\frac{4}{\sigma}(\theta^{(1)}-\theta^{(2)})'Q_{(0,\tau^0)}
Q_{(0,\hat{\tau})}^{-1}R_{(0,\hat{\tau})}\\-\frac{4}
{\sigma}\theta^{(2)\prime}R_{(0,T)}-\frac{4}{\sigma^2}
\theta^{(2)\prime}Q_{(0,\tau^0)}(\theta^{(1)}
-\theta^{(2)})+2h(p)(\log T- \log(\Delta_t)).\end{eqnarray*}

\noindent Therefore,
\begin{eqnarray*}\mathcal{IC}(m=1)-\mathcal{IC}(m=0)=
-\frac{1}{\sigma^2}(\theta^{(1)}-\theta^{(2)})'
Q_{(0,\tau^0)}(Q_{(0,\hat{\tau})}^{-1}
- Q_{(0,T)}^{-1})Q_{(0,\tau^0)}(\theta^{(1)}-\theta^{(2)})\\
-3R_{(0,\tau^0)}'Q_{(0,\tau^0)}^{-1}R_{(0,\tau^0)}
-3R_{(\tau^0,T)}'Q_{(\tau^0,T)}^{-1}R_{(\tau^0,T)}
+3R_{(0,T)}'Q_{(0,T)}^{-1}R_{(0,T)}-\frac{4}{\sigma}
(\theta^{(1)}-\theta^{(2)})'Q_{(0,\tau^0)}\\\times Q_{(0,\hat{\tau})}^{-1}
R_{(0,\hat{\tau})}
-\frac{4}{\sigma}(\theta^{(1)}-\theta^{(2)})'Q_{(0,\tau^0)}
Q_{(0,T)}^{-1}R_{(0,T)}+h(p)(\log T- \log(\Delta_t)).\end{eqnarray*}
\noindent Multiplying both sides of the above identity by $\frac{1}{T}$,
and using (\ref{conv_R}), (\ref{conv_Q}) and the Cauchy-Schwarz's inequality,
we have
 \begin{eqnarray}\label{ci-eq01}\frac{1}{T}(\mathcal{IC}(m=1)-\mathcal{IC}(m=0))=
\frac{1}{\sigma^2}(\theta^{(1)}-\theta^{(2)})'\frac{1}{T}
Q_{(\tau^0,\hat{\tau})}TQ_{(0,\hat{\tau})}^{-1}\frac{1}{T}Q_{(0,\tau^0)}
(\theta^{(1)}-\theta^{(2)})\nonumber\\
-\frac{(1-s^0)}{\sigma^2}(\theta^{(1)}-\theta^{(2)})'\frac{1}{(1-s^0)T}
Q_{(\tau^0,T)} Q_{(0,T)}^{-1}Q_{(0,\tau^0)}
(\theta^{(1)}-\theta^{(2)})+o_p(1).\quad
\label{prn3-eq2}\end{eqnarray}
Note that the second term in (\ref{ci-eq01}) is equal to
$$-\frac{(1-s^0)}{\sigma^2}(\theta^{(1)}-\theta^{(2)})'\frac{1}{2(1-s^0)T}
(Q_{(\tau^0,T)} Q_{(0,T)}^{-1}Q_{(0,\tau^0)}+Q_{(0,\tau^0)} Q_{(0,T)}^{-1}Q_{(\tau^0,T)} )
(\theta^{(1)}-\theta^{(2)}).$$
\noindent Using the similar argument as in the proof of Proposition~\ref{prn-mle1},
we have that under Assumption \ref{asm4}, the second term is less than
$-C_4 ||\theta^{(1)}-\theta^{(2)}||^2$ for some $C_4>0$. It also follows
from Proposition~\ref{prn-mle2} that $||\frac{1}{T}Q_{(\tau^0,\hat{\tau})}||
=\frac{\hat{\tau}-\tau^0}{T}||\frac{1}{\hat{\tau}-\tau^0}Q_{(\tau^0,\hat{\tau})}||
=o_p(1)$ for large $T$. Thus, by (\ref{conv_Q}), together with the Cauchy-Schwarz's
inequality we have
$$\frac{1}{\sigma^2}(\theta^{(1)}-\theta^{(2)})'\frac{1}{T}Q_{(\tau^0,\hat{\tau})}T
Q_{(0,\hat{\tau})}^{-1}\frac{1}{T}Q_{(0,\tau^0)}(\theta^{(1)}-\theta^{(2)})=o_p(1).$$
This tells us that (\ref{prn3-eq2}) is dominated by the first term for large
$T$ and it is negative. Therefore $\mathcal{IC}(m=0)>\mathcal{IC}(m=1)$ with probability 1.
\end{proof}

\ \\
\ \\
\ \\
\noindent {\bf Acknowledgements:} \textit{Fuqi Chen and Rogemar Mamon wish to thank the hospitality
and financial support of the Fields Institute for Research in Mathematical
Sciences, Toronto, Ontario, Canada, where this research was
conceived and partially conducted. Likewise, all authors gratefully acknowledge the generous support
of Charmaine Dean and Matt Davison on this research collaboration. Helpful comments
from Charmaine Dean and Reg Kuperger are very much appreciated.} \\
\ \\
\ \\
\ \\
\noindent {\large \bf References}

\end{document}